\documentclass[11pt, psamsfonts]{amsart}
\pdfoutput=1
\usepackage{mathpazo, eulervm}
\usepackage[english]{babel}
\usepackage[utf8]{inputenc}
\usepackage{graphicx, amssymb, amsmath}



\setlength{\pdfpagewidth}{8.5in}  
\setlength{\pdfpageheight}{11in}  
\setlength{\textwidth}{6.5in}     
\setlength{\oddsidemargin}{0in}   
\setlength{\evensidemargin}{0in}  
\setlength{\textheight}{8.5in}    
\setlength{\topmargin}{0in}       
\setlength{\headheight}{0.3in}      
\setlength{\headsep}{0.2in}         
\setlength{\footskip}{.5in}       
\setlength{\voffset}{-0.5in}       

\newcommand{\X}{\mathbb{X}}
\newcommand{\Y}{\mathbb{Y}}
\newcommand{\R}{\mathbb{R}}
\newcommand{\Z}{\mathbb{Z}}
\newcommand{\N}{\mathbb{N}}
\newcommand{\LL}{\mathcal{L}}

\newcommand{\D}{\mathcal{D}}
\newcommand{\E}{\mathbb{E}}
\renewcommand{\P}{\mathbb{P}}
\newcommand{\AC}{\mathcal{AC}}
\renewcommand{\S}{\mathbb{S}}
\newcommand{\dd}{\mathrm{d}}
\newcommand{\e}{\varepsilon}
\newcommand{\vs}{\mathbf{V}}
\newcommand{\vb}{\mathbf{v}}
\newcommand{\wb}{\mathbf{w}}
\newcommand{\im}{\mathrm{Im\,}}
\newcommand{\homt}{\mathfrak{g}}
\newcommand{\W}{\mathfrak{W}}
\newcommand{\w}{\mathfrak{w}}
\newcommand{\reach}{\mathrm{reach}}
\newcommand{\cl}[1]{\overline{#1}}

\newcommand{\ignore}[1]{}

\newtheorem{lem}{Lemma}
\newtheorem{thm}{Theorem}
\newtheorem{prp}{Proposition}

\newtheorem{cor}{Corollary}
\theoremstyle{remark}

\newcommand{\Hawaii}{Hawai\kern.05em`\kern.05em\relax i }
\newcommand{\Manoa}{M\=anoa }

\title{Typical representatives of free homotopy classes in a
multi-punctured plane}

\author[M. Arnold]{Maxim Arnold}
\address{Department of Mathematical Sciences, University of Texas at
Dallas, Richardson TX, USA}
\email{maxim.arnold@utdallas.edu}
\author[Y. Baryshnikov]{Yuliy Baryshnikov}
\address{Departments  of Mathematics and Electrical and Computer Engineering, University  of Illinois at Urbana-Champaign, Urbana IL,
USA}
\email{ymb@illinois.edu}
\author[Y. Mileyko]{Yuriy Mileyko}
\address{Department of Mathematics, University of \Hawaii at \Manoa,
Honolulu HI, USA}
\email[Corresponding author]{yury@math.hawaii.edu}

\date{\today}

\begin{document}

\begin{abstract}
We show that a uniform probability measure supported on a specific set
of piecewise linear loops in a non-trivial free homotopy class in a
multi-punctured plane is overwhelmingly concentrated around loops of
minimal lengths. Our approach is based on extending Mogulskii's theorem
to closed paths, which is a useful result of independent interest. In
addition, we show that the above measure can be sampled using standard
Markov Chain Monte Carlo techniques, thus providing a simple methods for
approximating shortest loops.
\end{abstract}

\maketitle

\section{Introduction}
\label{sec:intro}
The problem of finding a path of minimum length in a metric space under
topological constraints is one of the classical problems in geometric
optimization. It has numerous applications, including path planning and
navigation \cite{bhattacharya2012, yershov2013}, VLSI routing \cite{gerez1999,
sherwani2012}, and surface cutting \cite{erickson2004}, which is an important
step in surface parametrization \cite{floater1997, sheffer2000} and texture
mapping \cite{bennis1991, piponi2000}.

The shortest path problem has been considered in many different settings, and
tackled using a variety of techniques. Most commonly, paths in a planar domain
or in a (two-dimensinal) surface are considered, and numerous algorithms have
been developed to find the corresponding shortest paths or approximations
thereof (see e.g.  \cite{hershberger1994, grigoriev1998, bespamyatnikh2003,
efrat2006, cabello2010, cheng2012} and references therein).

In this paper we take a completely different approach to this classical
problem.  It has been noted that values of cost functions in some optimization
problems differ very slightly from the mean (or median) value with respect to
some naturally defined probability measure, leading to interesting
approximation techniques \cite{barvinok1997}. This is a consequence of the well
studied concentration of measure phenomenon \cite{ledoux2005}. Roughly
speaking, a Borel probability measure $\mu$ on a metric space $(\X,\dd)$ is
concentrated around a set $A\subset\X$ if the quantity $1-\mu(A_{\e})$, where
$A_{\e}=\{x\in\X|\dd(x,A)<\e\}$, decreases very fast (e.g. exponentially) as
$\e$ grows. A typical example, mentioned in the above references, is the
concentration of the uniform probability measure on a high-dimensional unit
sphere around every equator.

Clearly, an approximate solution to an optimization problem may be obtained by
sampling from a measure concentrated around the minimizers of the cost
function. Of course, constructing such a probability measure, or showing that a
particular measure has the right concentration property, is by no means a
trivial task.  The goal of this paper is to show that such an approach is
indeed viable for the problem of finding loops of minimal length in a fixed,
nontrivial free homotopy class (we define the relevant notions below). 

Specifically, we consider discretized loops in a
multi-punctured plane and show that the uniform probability measure supported
on a specific set of such piecewise linear loops in a non-trivial homotopy class is
overwhelmingly concentrated around loops of minimal lengths. The choice of a
multi-punctured plane provides a nice compromise between simplicity and
applicability, as it can serve as a model for domains in many path planning
applications. We should also mention that our approach is based on extending
the Mogulskii's theorem to closed paths (in the plane), which is a useful
result of independent interest. 

The rest of the paper is structured as follows. Section \ref{sec:prelim}
contains the necessary background information. The statements of our main
results are provided in Section \ref{sec:loop_ldp}. In Section
\ref{sec:sampling} we show that the measure under consideration can be sampled
using standard Markov Chain Monte Carlo techniques. All the proofs of our
results have been put in a separate Section \ref{sec:proofs}. Section
\ref{sec:conclusion} concludes the paper.
\section{Preliminaries}
\label{sec:prelim}
Before stating our main result we need to introduce the necessary
nomenclature and provide several auxiliary results. Additional background
information can be found in such comprehensive texts as
\cite{burago2001, bridson1999, dembo2009}.

\subsection{Paths and loops}
Let $(\X, \dd)$ be a metric space.  A \emph{path} in $\X$ is a continuous
map $\gamma:I\to\X, I:=[0,1]$.  If a path $\gamma$ is closed, that is,
$\gamma(0)=\gamma(1)$, then we call it a \emph{loop}. A loop in $\X$ may
also be regarded as a continuous map from a circle, $\gamma:\S^1\to\X$,
in which case it is convenient to think of the circle as a quotient of
$\R$, $\S^1=\R/\Z$, and regard $\R$ as a covering space for $\S^1$. Such
a setting allows us to consider the lift of a map on $\S^1$ to a map on
$\R$, which is often useful (see e.g. \cite{burago2001} for details).
Given $[a,b]\subset [0,1]$, the restriction of a path $\gamma$ onto
$[a,b]$, denoted by $\gamma|_{[a,b]}$, is the path defined by
$\gamma|_{[a,b]}(t) = \gamma\left(a+{t}{(b-a)}\right)$.  


If paths $\gamma_1,\ldots,\gamma_m$ are such that
$\gamma_i(1)=\gamma_{i+1}(0)$, $i=1,\ldots,m-1$, and
$\mathbf{c}=(c_1,\ldots,c_m)$ is such that $c_i\geq 0$ and 
$\sum_{i=1}^{m}{c_i}=1$, then we define the
$\mathbf{c}$-concatenation of $\gamma_i$
as the path
$$
\gamma_1\overset{c_1}{\cdot}\ldots\overset{c_{m-1}}{\cdot}\gamma_m(t)=
\gamma_i\left(\frac{t-C_{i-1}}{c_i}\right),\quad
t\in[C_{i-1},C_i],
$$
where $C_i=\sum_{j=1}^{i}{c_j}$. The value $c_i$ is called
the \emph{traversal time} of the path $\gamma_i$ in the concatenation.
Note that zero traversal times are allowed only for constant paths,
i.e. paths $\gamma$ such that $\gamma(t)=\gamma(0)$ for all
$t\in[0,1]$.
If traversal times are not important, we will talk about
\emph{a concatenation} of paths.
In this case we will use notation $\gamma_1\cdot\ldots\cdot\gamma_m$.

The length of a path $\gamma$ is defined by
$$
L(\gamma) = \sup{\sum_{i=1}^n{\dd(\gamma(t_{i-1}), \gamma(t_{i}))}},
$$
where the supremum is taken over all finite collections
of points $0=t_0<t_1<\cdots<t_n=1$. A path is called \emph{rectifiable}
if its length is finite. The length of a
restriction $\gamma|_{[a,b]}$ will be denoted $L(\gamma,a,b)$. 

When focusing on geometric properties of paths, it is sometimes convenient not to
distinguish paths that differ only up to a change of variable.
To this end, we define a \emph{curve} as an equivalence class of the
equivalence relation for which paths $\gamma_1$ and $\gamma_2$
are equivalent if $\gamma_1(\varphi_1(t)) = \gamma_2(\varphi_2(t))$, where
$\varphi_i:[0,1]\to[0,1]$, $i=1,2$, are continuous, nondecreasing
functions (see \cite{burago2001} for details). A particular path within a curve is
called a parametrization of that curve. Paths representing the same
curve are re-parametrizations of each other. Such paths have the same
image and the same length, allowing us to define these concepts for
curves. If a curve is rectifiable then it has the constant speed
parametrization, which is the path $\gamma$ such that $L(\gamma, t_0,
t_1)=L(\gamma)(t_1-t_0)$.

In the case of loops, it is further often useful to not fix the starting point.
Hence, the notion of a curve has to be slightly modified.  We define a
\emph{free loop} as an equivalence class of the equivalence relation for
which loops $\gamma_1,\gamma_2:\S^1\to\X$ are equivalent if
$\gamma_1(\varphi_1(t)) = \gamma_2(\varphi_2(t))$, where
$\varphi_i:\S^1\to\S^1$, $i=1,2$, are orientation preserving homeomorphisms. 

Once again, loops
representing the same free loop have the same image and length, and
rectifiable free loops admit a constant speed parametrization. If
$\hat\gamma$ is a free loop (or a curve) we define
$L(\hat\gamma)=L(\gamma)$, where $\gamma$ is a representation of
$\hat\gamma$.

One of the central concepts in the topology and geometry of paths is
homotopy.  Two paths $\gamma_0$ and $\gamma_1$ such that
$\gamma_0(0)=\gamma_1(0)=x\in\X$ and $\gamma_0(1)=\gamma_1(1)=y\in\X$
are said to be homotopic if there exists a continuous map $H:I\times
[0,1]\to\X$ such that $H(\cdot, 0)=\gamma_0$, $H(\cdot, 1)=\gamma_1$,
and $H(0,t)=x, H(1,t)=y$ for all $t\in[0,1]$. Intuitively, two paths are
homotopic if one can be continuously deformed into the other keeping the
endpoints fixed. It is useful to note that two representations of the
same curve are homotopic.

The homotopy keeps the starting point fixed, which may be undesirable
when dealing with loops. In this case we we need to use the \emph{free
homotopy}. More precisely, loops $\gamma_0$ and $\gamma_1$ are said to
be freely homotopic if there exists a continuous map $H:I\times
[0,1]\to\X$ such that $H(\cdot, 0)=\gamma_0$, $H(\cdot, 1)=\gamma_1$,
and $H(0,t)=H(1,t)$ for all $t\in [0,1]$.  Similarly to the case of
curves, two representations of the same free loop are freely homotopic.
Also, being freely homotopic is an equivalence relation, and an
equivalence class of freely homotopic loops is called a free homotopy
class. Such a class is called trivial if it contains a constant loop
(i.e. a point). Loops within the trivial free homotopy class are called
contractible. A contractible loop is actually homotopic to a constant
loop.

We denote the space of paths in $\X$ by $\Omega(\X)$ and endow it with
the $C^0$ metric, which we denote by $\rho$. That is, given $\gamma_0,
\gamma_1\in\Omega(\X)$, the distance between them is defined by
$\rho(\gamma_0, \gamma_1) =
\sup_{t\in[0,1]}\dd(\gamma_0(t),\gamma_1(t))$. The subspace of
$\Omega(\X)$ consisting of loops will be denoted by $\LL(\X)$.  We may
also consider the space of curves in $\X$, which we denote by
$\hat\Omega(\X)$ and the space of free loops, $\hat\LL(\X)$. The maps
$\pi_{\Omega}:\Omega(\X)\to\hat\Omega(\X)$ and
$\pi_{\LL}:\LL(\X)\to\hat\LL(\X)$ will denote the corresponding canonical
projections.  We endow both $\hat\Omega(\X)$ and $\hat\LL(\X)$ with a
metric.  The distance between
$\hat\gamma_0,\hat\gamma_1\in\hat\Omega(\X)$ is defined as
$\hat{\rho}_{\Omega}(\hat\gamma_0, \hat\gamma_1) =
\inf_{\gamma_i\in\pi_{\Omega}^{-1}(\hat\gamma_i)}\rho(\gamma_0,\gamma_1)$.
Similarly, the distance between $\hat\gamma_0,\hat\gamma_1\in\hat\LL(\X)$
is defined as $\hat{\rho}_{\LL}(\hat\gamma_0, \hat\gamma_1) =
\inf_{\gamma_i\in\pi_{\LL}^{-1}(\hat\gamma_i)}\rho(\gamma_0,\gamma_1)$.

\subsection{Paths and loops in a punctured plane}
The concrete metric space that we consider in this paper is a
multi-punctured plane, $X=\R^2\setminus Z$, $Z=\{z_1,\ldots,z_K\}$,
$z_i\in\R^2$, with the standard Euclidean metric, $\dd(x,y) = \|x-y\|$,
where $\|\cdot\|$ is the Euclidean norm. 
By $\reach(Z)$ we denote half the minimum
distance between the punctures, $\reach(Z)=\frac{1}{2}\min_{z,w\in
Z}\|z-w\|$.  Also, it will be convenient to define
$X^{\delta}=\R^2\setminus\cup_{i=1}^{K}{B_{\delta}(z_i)}$,
$\delta>0$, where $B_{\delta}(z_i)$ denotes  an open ball of radius
$\delta$ centered at $z_i$. For $\delta<\reach(Z)$, $X^\delta$ is homotopy equivalent to $X$.

A free homotopy class shall be regarded as a connected component of the
space of loops in $X$. Given sets $A\subset B\subset\R^2$, we shall regard
$\Omega(A)$ as a subset of $\Omega(B)$, and $\LL(A)$ as a subset of
$\LL(B)$. In particular, we have $\LL(X^{\delta})\subset\LL(X)$. Throughout
the rest of the paper, $\homt(X)\subset\LL(X)$ will denote a fixed,
nontrivial free homotopy class of $\LL(X)$, and
$\homt(X^{\delta})\subset\LL(X^{\delta})$ will be the free homotopy class
of $\LL(X^{\delta})$ such that $\homt(X^{\delta})\subset\homt(X)$.
Notice that $\homt(X^{\delta})$ is well defined if
$\delta<\reach(Z)$, which we assume hereafter. We also define
$\hat\homt(X)=\pi(\homt(X))$,
$\hat\homt(X^{\delta})=\pi_{\LL}(\homt(X^{\delta}))$.

Loosely speaking, our goal is to show that a loop chosen ``uniformly at
random'' in $\hat\homt(X)$ is extremely likely to be very close to the
shortest loop (essentially, unique) in that class. 

To make this statement more precise, we need to define an
appropriate probability measure on $\hat\homt(X)$. Such a probability
measure can be obtained as a push forward of a probability measure on
$\homt(X)$. In fact, we shall consider a sequence of probability
measures on $\homt(X)$, each supported on an increasingly finer finite
dimensional approximation of loops in $\homt(X)$. The result for
$\hat\homt(X)$ will then be obtained as a corollary of a stronger result
for $\homt(X)$. In what follows, it will be convenient to regard
$\Omega(X)$ (as well as any of its subsets, e.g. a homotopy class) as a
subset of $\Omega(\R^2)$.

A path $\gamma$ in $\R^2$ is called linear with endpoints
$x,y\in\R^2$ if $\gamma(t)=x+t(y-x)$. Such a path will be denoted by
$[x,y]$.  Clearly, the length of $[x,y]$ is just $\dd(x,y)$. We say that
$\gamma$ is a piecewise linear path if it is a concatenation of finitely
many linear paths. Each linear path of such a concatenation is called an
edge of $\gamma$, and an endpoint of an edge is called a vertex of
$\gamma$. It is easy to see that the length of a piecewise linear path
is just the sum of its edge lengths. We will denote the set of
piecewise linear paths in $Y\subset\R^2$ by $\Omega_{PL}(Y)$; the
space of piecewise linear loops in $Y$ will be denoted $\LL_{PL}(Y)$.
Notice that given a piecewise linear path one can ``close'' it by adding an
edge between the first and the last vertices. Alternatively, one can
``open'' a piecewise linear loop by removing
the last edge. We will use this fact, and we define
$\iota:\LL_{PL}(\R^2)\to\Omega_{PL}(\R^2)$ by
$\iota(e_1\overset{\alpha_1}{\cdot}\ldots\overset{\alpha_{m}}{\cdot}
e_{m+1})=e_1\overset{\beta_1}{\cdot}\ldots\overset{\beta_{m-1}}{\cdot} e_{m}$,
where $e_i$, $i=1,\ldots,m$, are linear paths and
$\beta_i=\alpha_i/\sum_{j=1}^{m-1}{\alpha_i}$.

The following result shows that piecewise linear paths form a dense set:
\begin{prp}
    \label{prp:path_approx}
    Let $\gamma\in\Omega(\R^2)$. Then $\forall\e>0$ there
    exists $\gamma_{PL}\in\Omega_{PL}(\R^2)$ such
    that $\rho(\gamma, \gamma_{PL})<\e$.  Moreover, $\gamma_{PL}$ can be
    chosen such that $\gamma_{PL}$ is a loop if $\gamma$ is a loop,
    $\gamma_{PL}(t)=\gamma(t)$ if $\gamma_{PL}(t)$ is a vertex,
    and each edge of $\gamma_{PL}$
    has traversal time $\frac{1}{m}$, where $m$ is the number of edges.
\end{prp}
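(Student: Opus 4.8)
\medskip
\noindent\textbf{Proof proposal.}
The plan is to obtain $\gamma_{PL}$ by linearly interpolating $\gamma$ on a sufficiently fine uniform partition of $I=[0,1]$. First I would use that $\gamma$, being continuous on the compact interval $I$, is uniformly continuous: for the given $\e>0$ there is a $\delta>0$ such that $\|\gamma(s)-\gamma(t)\|<\e/3$ whenever $|s-t|<\delta$. Next I would choose $m\in\N$ with $1/m<\delta$, set $t_i=i/m$ for $i=0,\dots,m$, and let $\gamma_{PL}$ be the $\mathbf{c}$-concatenation, with $\mathbf{c}=(1/m,\dots,1/m)$, of the linear paths $[\gamma(t_{i-1}),\gamma(t_i)]$, $i=1,\dots,m$. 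This is a well-defined element of $\Omega_{PL}(\R^2)$: consecutive edges meet at the common point $\gamma(t_i)$, so $\gamma_{PL}$ is continuous, and an edge with $\gamma(t_{i-1})=\gamma(t_i)$ is simply a constant path, legitimately traversed in time $1/m$. By construction each edge has traversal time $1/m$, the vertices of $\gamma_{PL}$ occur exactly at the parameters $t_i$ with $\gamma_{PL}(t_i)=\gamma(t_i)$, and $\gamma_{PL}(0)=\gamma(0)$, $\gamma_{PL}(1)=\gamma(1)$; in particular $\gamma_{PL}$ is a loop whenever $\gamma$ is.

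The remaining step is the $C^0$ estimate. Fix $t\in I$ and choose $i$ with $t\in[t_{i-1},t_i]$. Then $\gamma_{PL}(t)$ lies on the segment joining $\gamma(t_{i-1})$ to $\gamma(t_i)$, so
\[
\|\gamma_{PL}(t)-\gamma(t_{i-1})\|\le\|\gamma(t_i)-\gamma(t_{i-1})\|<\e/3,
\]
since $|t_i-t_{i-1}|=1/m<\delta$; and $\|\gamma(t)-\gamma(t_{i-1})\|<\e/3$ because $|t-t_{i-1}|\le 1/m<\delta$. The triangle inequality then gives $\|\gamma_{PL}(t)-\gamma(t)\|<2\e/3$ for every $t$, and taking the supremum over $t$ yields $\rho(\gamma,\gamma_{PL})\le 2\e/3<\e$.

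I do not anticipate a genuine obstacle here — the argument is entirely routine. The only points that require a little care are bookkeeping ones: checking continuity of the concatenation at the breakpoints, checking that degenerate edges arising when two consecutive sample points coincide cause no trouble (they are constant paths, and positive traversal time is permitted for constant paths), and reading the ``vertex'' clause as the assertion that $\gamma_{PL}$ agrees with $\gamma$ at the finitely many parameter values $t_i$ at which $\gamma_{PL}$ has a vertex.
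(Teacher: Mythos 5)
Your proposal is correct and follows essentially the same argument as the paper: uniform continuity of $\gamma$ on $[0,1]$, linear interpolation on a uniform partition with mesh below the modulus of continuity, and a triangle-inequality estimate at each $t$ (the paper uses $\e/2$ where you use $\e/3$, but this is immaterial). The extra bookkeeping you mention about degenerate edges and continuity at breakpoints is fine and does not change the substance.
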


A curve or a free loop is called piecewise linearizable if it possesses a
piecewise linear parametrization. A piecewise linear curve or free loop
is completely determined by its vertices. Hence, there is a
correspondence between $\R^{2(n+1)}$ and piecewise linear curves (or
free loops) in $\R^2$ with $n+1$ vertices. More precisely, one can take
$\mathbf{v}=(v_0,\ldots, v_n)\in\R^{2(n+1)}$ to correspond to the curve
represented by a concatenation of linear paths $[v_{i-1}, v_i]$,
$i=1,\ldots,n$. If we also concatenate $[v_n, v_0]$, then $\mathbf{v}$
corresponds to the resulting free loop.

Given the starting point $v_0$ and approximation scale ${n}$, we define the
{\em finite-dimensional approximations} for the
curve and free loop yields maps as
$\Psi_n:\R^{2(n+1)}\to\Omega(\R^2)$ and
$\Phi_n:\R^{2(n+1)}\to\LL(\R^2)$, respectively.

We will also use an alternative correspondence between
$\R^{2(n+1)}$ and piecewise linear curves and free loops. It is obtained
by letting $\mathbf{w}=(v_0, x_1,\ldots,x_n)\subset\R^{2(n+1)}$
correspond to the curve (or free loop) with vertices $v_0,
S_1,\ldots,S_n$, where $S_k=v_0+\sum_{i=1}^{k}{x_i}$, $k=1,\ldots,n$.
The corresponding maps from $\R^{2(n+1)}$ into $\Omega(\R^2)$ and
$\LL(\R^2)$ are compositions of $\Psi_n$
and $\Phi_n$ with the homeomorphism $f:\R^{2(n+1)}\to\R^{2(n+1)}$
defined by $f(v_0,x_1,\ldots,x_n)=(v_0, S_1,\ldots,S_n)$, that is, we
consider $\tilde\Psi_n=\Psi_n\circ f$ and $\tilde\Phi_n=\Phi_n\circ f$.

Now, if we take the uniform probability measure on the appropriate
subset $G_n\subset\R^{2(n+1)}$ we can push it forward to $\homt(X)$
using $\Phi_n$, and then further to $\hat\homt(X)$ using $\pi_{\LL}$.  Of
course, $G_n$ should be bounded. Also, as $n$ increases, we would like
the image of $G_n$ under $\pi_{\LL}\circ\Phi_n$ to provide an
increasingly finer approximation of loops in $\hat\homt(X)$. To achieve
boundedness we need to restrict ourselves to loops of bounded length.
Hence, let $R>0$, and let $\hat\homt^R(X)$ be the set of free loops in
$\hat\homt(X)$ with length less than $R$. We choose $R$ sufficiently
large, so that $\hat\homt^R(X)\neq\emptyset$. Notice that Proposition
\ref{prp:path_approx} implies that any free loop in $\hat\homt^R(X)$ can
be approximated by a piecewise linear free loop, and this approximation
improves with decreasing edge length.  Therefore, we define $G_n$ as
follows:
\begin{multline*}
G_n =
\left\{\mathbf{x}=(x_0,\ldots,x_n)\in\R^{2(n+1)} :\right.\\
\left.    \Phi_n(x)\in\homt(X), \|x_0-x_{n}\|<\frac{R}{n+1},
\|x_{i}-x_{i-1}\|<\frac{R}{n+1},i=1,\ldots,n
\right\}
\end{multline*}
Let $\hat\homt^R_n(X)=\pi_{\LL}\circ\Phi_n(G_n)$. It is the set of piecewise
linear free loops in $\hat\homt(X)$ with $n+1$ vertices and edge lengths
less than $\frac{R}{n+1}$. Also, let $\homt^R_n(X)=\Phi_n(G_n)$, which
is the set of piecewise linear loops in $\homt(X)$ with $n+1$ vertices
whose edges have traversal time $\frac{1}{n+1}$ and length less than $\frac{R}{n+1}$.
Clearly, such loops have speed strictly bounded by $R$. We let
$\homt^R(X)$ denote the set of all loops in $\homt(X)$ with speed strictly
bounded by $R$ and notice that $\hat\homt^R(X)=\pi_{\LL}(\homt^R(X))$.

Define $\nu_n$ to be the push forward under $\Phi_n$ of the uniform
probability measure on $G_n$, and let $\hat\nu_n$ be the push forward of
$\nu_n$ under $\pi_{\LL}$. We can now state our goal more precisely (although
still somewhat informally): we want to show that $\hat\nu_n$ becomes
overwhelmingly concentrated around the shortest loop as $n\to\infty$. We
make this statement completely rigorous in the next section.

\subsection{Random paths and Mogulskii's theorem}
The above definition of $G_n$ allows for an alternative description of
$\nu_n$ which is better amenable to analysis.  Denote by
$B_R\subset\R^2$ the disk of radius $R$ centered at the origin and by
$A_n\subset\R^2$ the projection of $G_n$ onto the first two coordinates.
Note that $A_n\subset A_{n+1}$, and $A=\cup_{n}{A_n}$ is bounded. 
Let $\mu$ be the uniform probability measure
on $B_R$, where for ease of notation we suppressed the explicit
dependence on $R$, and let $\upsilon_n$
be the uniform probability measure on $A_n$. Suppose that $V_n$ is a random
variable with the probability law $\upsilon_n$, $X_1,\ldots,X_n$ are
i.i.d. random variables with the probability law $\mu$, and consider the
random piecewise linear path $\tilde\Psi_n\left(V_n,
\frac{X_1}{n},\ldots,\frac{X_n}{n}\right)$. Let $\mu_n$ be the
probability law of such a path. Then given $\Gamma\subset\homt(X)$ we
have
$\nu_n(\Gamma)=\frac{\mu_n(\iota(\Gamma\cap\homt^R_n(X)))}{\mu_n(\iota(\homt^R_n(X)))}$.
For convenience, $\nu_n$, $\mu$, $\mu_n$, and $\upsilon_n$ will retain
the aforementioned meaning throughout the paper.

With such a set-up we are in the position to employ the powerful
machinery of the large deviation theory, in particular the Mogulskii's
Theorem. First, we need to introduce a few more concepts and results. A
\emph{rate function} on a topological space $\Y$ is a lower
semicontinuous map $I:\Y\to [0,\infty]$ such that its sublevel sets,
$\{y\in \Y|I(y)\leq\alpha\}$, $\alpha\in [0,\infty]$, are closed. A rate
function is called \emph{good} if its sublevel sets are compact. By
$\D_I$ we will denote the \emph{effective domain} of the rate function
$I$, that is, $\D_I=\{y\in\Y| I(y)<\infty\}$.

Taking into account our alternative description of $\nu_n$, let
$\Lambda$ denote the \emph{logarithmic moment generating function}
associated with $\mu$, that is $\Lambda(\eta) = \log{\E(e^{<X,\eta>})}$,
where $\E(\cdot)$ denotes the expectation, $X$ has probability law
$\mu$, and $<\cdot, \cdot>$ denotes the inner product. Define
$\Lambda^*$ to be the Fenchel-Legendre transform of $\Lambda$, that is
$\Lambda^*(x) = \sup_{\eta}{[<x,\eta>-\Lambda(\eta)]}$. The following
proposition summarizes the properties of $\Lambda$ and $\Lambda^*$:
\begin{prp}
    \label{prp:rate_func}
    \begin{enumerate}
        \item \label{i:1} $\Lambda$ is a strictly convex,
            everywhere differentiable function. 
        \item \label{i:2} $\Lambda^*$ is a good strictly convex
            rate function.
        \item \label{i:3} If $y=\nabla\Lambda(\eta)$ then $\Lambda^*(y) =
            <\eta,y>-\Lambda(\eta)$.
        \item \label{i:4} Both $\Lambda$ and $\Lambda^*$ are invariant
            under rotations around the origin,
            $\D_{\Lambda^*} = B_R$, and $\forall y\in B_R$ $\exists
            \eta\in\R^2$ such that $y=\nabla\Lambda(\eta)$.
    \end{enumerate}
\end{prp}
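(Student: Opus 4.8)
The plan is to derive all four claims from standard properties of the moment generating function of the compactly supported measure $\mu$ together with the convex-analytic duality between a convex function and its Legendre--Fenchel conjugate.

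First, for claim (1) I would write $\Lambda(\eta)=\log m(\eta)$ with $m(\eta)=\frac{1}{\pi R^2}\int_{B_R}e^{\langle x,\eta\rangle}\,\dd x$, and note that since $\|x\|\le R$ on the support, the integrand and all of its $\eta$-derivatives are bounded uniformly for $\eta$ in compact sets; differentiating under the integral sign then shows that $m$, and hence $\Lambda$, is $C^{\infty}$ on all of $\R^2$. The usual computation gives $\nabla\Lambda(\eta)=\E_{\mu_\eta}[X]$ and $\nabla^2\Lambda(\eta)=\mathrm{Cov}_{\mu_\eta}[X]$, where $\mu_\eta$ is the exponentially tilted measure with density proportional to $e^{\langle x,\eta\rangle}$ on $B_R$. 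Since $\mu_\eta$ has full support on the two-dimensional set $B_R$ and so is not carried by any line, $\mathrm{Var}_{\mu_\eta}(\langle X,v\rangle)>0$ for every $v\neq 0$, i.e.\ $\nabla^2\Lambda\succ 0$ everywhere; this gives strict convexity, and $\Lambda$ is finite and differentiable on all of $\R^2$, hence essentially smooth and of Legendre type, which is the hook for the rest.

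For claims (2) and (3): $\Lambda^*$, being a supremum of affine maps, is convex and lower semicontinuous, and $\Lambda^*(y)\ge\langle y,0\rangle-\Lambda(0)=0$, so it is a $[0,\infty]$-valued rate function. From $\Lambda(\eta)\le\sup_{x\in B_R}\langle x,\eta\rangle=R\|\eta\|$ one gets $\Lambda^*(y)\ge\sup_{t>0}t(\|y\|-R)=\infty$ whenever $\|y\|>R$, so $\D_{\Lambda^*}\subseteq\overline{B_R}$ is bounded; together with lower semicontinuity this makes the sublevel sets compact, so $\Lambda^*$ is good. Strict convexity of $\Lambda^*$ I would obtain from essential smoothness of $\Lambda$ via the Rockafellar correspondence (essentially smooth $\leftrightarrow$ essentially strictly convex for conjugate pairs), noting that once $\D_{\Lambda^*}$ is identified as an open disk, ``essentially strictly convex'' just means strictly convex on the whole domain. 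For claim (3), if $y=\nabla\Lambda(\eta)$ then the concave differentiable map $\eta'\mapsto\langle y,\eta'\rangle-\Lambda(\eta')$ has gradient $y-\nabla\Lambda(\eta')$ vanishing at $\eta'=\eta$, so the supremum defining $\Lambda^*(y)$ is attained there and equals $\langle\eta,y\rangle-\Lambda(\eta)$.

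For claim (4): rotation invariance of $\mu$ gives $\Lambda(\mathcal O\eta)=\log\E[e^{\langle \mathcal O^{\!\top}X,\eta\rangle}]=\Lambda(\eta)$ for every rotation $\mathcal O$ (since $\mathcal O^{\!\top}X$ has law $\mu$), and the same symmetry descends to $\Lambda^*$ through its definition. By symmetry, $\nabla\Lambda(t\mathbf e)=g(t)\,\mathbf e$ for each unit vector $\mathbf e$, where $g(t)=\langle\nabla\Lambda(t\mathbf e),\mathbf e\rangle$ is strictly increasing (strict convexity) with $g(0)=0$; moreover $\nabla\Lambda(\eta)=\E_{\mu_\eta}[X]$ is the barycenter of a measure supported on $B_R$ that assigns zero mass to every line, hence lies in the open disk $B_R$, while as $t\to\pm\infty$ the tilt $\mu_{t\mathbf e}$ concentrates at $\pm R\mathbf e$, so $g(t)\to\pm R$. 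Thus $\nabla\Lambda$ is a bijection $\R^2\to B_R$, which is the last assertion, and combined with claim (3) it yields $B_R\subseteq\D_{\Lambda^*}$. To exclude the boundary, I would evaluate $m(s\mathbf e)$ by slicing $B_R$ orthogonally to $\mathbf e$, obtaining $\Lambda(s\mathbf e)=Rs-\tfrac32\log s+O(1)$ as $s\to\infty$, from which $\Lambda^*(t\mathbf e)\to\infty$ as $t\to R^-$; since a convex function that is finite at an endpoint of a segment cannot blow up approaching that endpoint along the segment, $\Lambda^*\equiv\infty$ on $\partial B_R$, and therefore $\D_{\Lambda^*}=B_R$.

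The one step that needs genuine work is this exact identification $\D_{\Lambda^*}=B_R$ — getting all of the open disk in while keeping all of $\partial B_R$ out; everything else is bookkeeping with moment generating functions and convex duality. A slicker substitute for the ray asymptotics is the entropy representation $\Lambda^*(y)=\inf\{H(\nu\mid\mu):\int x\,\nu(\dd x)=y\}$, which makes the exclusion of $\partial B_R$ transparent, since a probability measure absolutely continuous with respect to $\mu$ whose barycenter lies on $\partial B_R$ would have to be supported on a tangent line and hence cannot exist.
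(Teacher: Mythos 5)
Your proposal is correct and follows essentially the same route as the paper: rotation invariance reduces everything to a ray, surjectivity of $\nabla\Lambda$ onto the open disk comes from the limit of the tilted mean as the tilt parameter grows (combined with $\nabla\Lambda(0)=0$ and continuity), and part (3) then gives finiteness of $\Lambda^*$ on $B_R$. The only divergence is cosmetic: the paper rules out all $\|y\|\ge R$ in one stroke by writing $R\|\eta\|-\Lambda(\eta)=-\log\int e^{-\|\eta\|(R-x_1)}\,\mu(\dd x)\to\infty$, whereas you dispose of $\|y\|>R$ with the crude bound $\Lambda(\eta)\le R\|\eta\|$ and then treat the boundary circle separately via the $\tfrac{3}{2}\log s$ correction in $\Lambda(s\mathbf{e})$ together with a convexity argument along a radius; both are valid.
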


Recall that a map $\phi:[0,1]\to\R^2$ is called absolutely continuous if
$\forall \e>0$ $\exists\delta>0$ such that $\sum_{i=1}^{m}{\d(\phi(y_i,
x_i))}<\e$ for every finite collection of disjoint intervals
$(x_i,y_i)\subset [0,1]$, $i=1,\ldots,m$, such that
$\sum_{i=1}^{m}{|y_i-x_i|}<\delta$. 

We will denote the space of
absolutely continuous paths and loops in $Y\subset\R^2$ by
$\Omega_{\AC}(Y)$ and $\LL_{\AC}(Y)$, respectively. It is useful to note
that if $\gamma\in\Omega_{\AC}(\R^2)$ then it is differentiable almost
everywhere and $\LL(\gamma,a,b) = \int_{a}^{b}{\|\gamma'(t)\| dt}$, where
$[a,b]\subset[0,1]$ and $\gamma'(t)$ denote the derivative of $\gamma$ at $t$.

We are now ready to state the Mogulskii's theorem:
\begin{thm}[Mogulskii]
    \label{thm:mogulskii}
    Let $\tilde\mu_n$ denote the probability law of the random path
    $\tilde\Psi\left(0,\frac{X_1}{n},\ldots,\frac{X_n}{n}\right)$, where $X_0,\ldots,X_n$ are i.i.d.
    random variables with the probability law $\mu$. Then the function
    $I_0:\Omega(\R^2)\to[0,\infty]$ defined by
    $$
    I_0(\phi) = \left\{
        \begin{array}{cl}
            \int_{0}^{1}{\Lambda^*(\phi'(t)) dt},&\quad \text{if
        }\phi\in\Omega_{\AC}(\R^2), \phi(0)=0\\
        \infty,&\quad\text{otherwise}
        \end{array}
        \right.
    $$
    is a good rate function, and for any Borel set
    $\Gamma\subset\Omega(\R^2)$ we have
    $$
    -\inf_{x\in\Gamma^{\circ}}{I_0(x)} \leq
    \liminf_{n\to\infty}{\frac{1}{n}\log{\tilde\mu_n(\Gamma)}}\leq
    \limsup_{n\to\infty}{\frac{1}{n}\log{\tilde\mu_n(\Gamma)}}\leq
    -\inf_{x\in\cl\Gamma}{I_0(x)},
    $$
    where $\Gamma^{\circ}$ denotes the interior of $\Gamma$ and
    $\cl\Gamma$ denotes the closure of $\Gamma$.
\end{thm}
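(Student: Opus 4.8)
The plan is to derive the statement from the classical sample-path large deviation principle of Mogulskii (see \cite{dembo2009}, Theorem 5.1.2), whose only hypothesis --- finiteness of $\Lambda$ everywhere --- holds here because $\mu$ is compactly supported; it is nonetheless instructive to outline the proof, which proceeds in three stages: a finite-dimensional LDP obtained from Cram\'er's theorem, an upgrade to the topology of pointwise convergence on $(\R^2)^{[0,1]}$ via the Dawson--G\"artner projective-limit theorem, and a final strengthening to the uniform ($C^0$) topology of $\Omega(\R^2)$. The compact support of $\mu$, which by Proposition~\ref{prp:rate_func} forces $\D_{\Lambda^*}=B_R$, is what makes the last two stages essentially automatic.

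Write $\phi_n:=\tilde\Psi_n\!\left(0,\tfrac{X_1}{n},\dots,\tfrac{X_n}{n}\right)$, so that $\phi_n$ is the polygonal path through the points $\tfrac1n\sum_{i=1}^{k}X_i$, $k=0,\dots,n$, traversed with equal edge times $1/n$; since $\|X_i\|\le R$ one has $\|\phi_n(t)-\phi_n(s)\|\le R|t-s|+R/n$ for all $s,t\in[0,1]$, and $\phi_n(0)=0$. First I would establish the finite-dimensional LDP: fixing $0=t_0<\dots<t_m=1$, the increments $\phi_n(t_j)-\phi_n(t_{j-1})$ equal, up to an $O(1/n)$ error, independent averages of $\lfloor nt_j\rfloor-\lfloor nt_{j-1}\rfloor\sim n(t_j-t_{j-1})$ i.i.d.\ copies of $X$; Cram\'er's theorem therefore gives an LDP for each increment with the good rate function $v\mapsto(t_j-t_{j-1})\Lambda^*\!\big(v/(t_j-t_{j-1})\big)$, and independence together with the contraction principle (reconstructing values from increments, a continuous linear map) yields an LDP for $(\phi_n(t_1),\dots,\phi_n(t_m))$ in $(\R^2)^m$ with good rate function $\sum_{j=1}^m(t_j-t_{j-1})\Lambda^*\!\big(\tfrac{x_j-x_{j-1}}{t_j-t_{j-1}}\big)$, $x_0:=0$. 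These families are consistent under refinement of the partition, so the Dawson--G\"artner theorem produces an LDP for the law of $\phi_n$ on $(\R^2)^{[0,1]}$ with the product topology, with rate function $J(\phi)=\infty$ unless $\phi(0)=0$, in which case
$$
J(\phi)=\sup_{0=t_0<\dots<t_m=1}\ \sum_{j=1}^{m}(t_j-t_{j-1})\,\Lambda^*\!\left(\frac{\phi(t_j)-\phi(t_{j-1})}{t_j-t_{j-1}}\right).
$$

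The main point is to identify $J$ with $I_0$. If $\phi\in\Omega_{\AC}(\R^2)$ with $\phi(0)=0$, Jensen's inequality applied to the convex map $\Lambda^*$ on each subinterval gives $(t_j-t_{j-1})\Lambda^*\!\big(\tfrac{\phi(t_j)-\phi(t_{j-1})}{t_j-t_{j-1}}\big)\le\int_{t_{j-1}}^{t_j}\Lambda^*(\phi'(t))\,\dd t$, so $J(\phi)\le I_0(\phi)$ and the supremum defining $J$ is non-decreasing under refinement. Conversely, if $J(\phi)<\infty$ then every difference quotient of $\phi$ lies in $\D_{\Lambda^*}=B_R$, so $\phi$ is $R$-Lipschitz, hence absolutely continuous and a.e.\ differentiable; passing to a sequence of partitions with mesh tending to zero and invoking the Lebesgue differentiation theorem, the lower semicontinuity of $\Lambda^*$, and Fatou's lemma yields $\int_0^1\Lambda^*(\phi'(t))\,\dd t\le J(\phi)$, so $J=I_0$. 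The same Jensen estimate shows that $\{I_0\le\alpha\}$ consists of paths with $\phi(0)=0$ satisfying $\Lambda^*\!\big(\tfrac{\phi(t)-\phi(s)}{t-s}\big)\le\alpha/|t-s|$ and hence $\|\phi(t)-\phi(s)\|\le R|t-s|$; such a family is equibounded and equi-Lipschitz, so by Arzel\`a--Ascoli it is precompact in $(\Omega(\R^2),\rho)$, and it is closed by lower semicontinuity of $I_0$ (Fatou again); thus $I_0$ is a good rate function.

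Finally I would upgrade the topology. The paths $\phi_n$ are, uniformly in $n$, equi-Lipschitz (up to the $O(1/n)$ term) and satisfy $\phi_n(0)=0$, so by Arzel\`a--Ascoli they all lie in a fixed compact set $\mathcal K\subset(\Omega(\R^2),\rho)$; consequently the laws $\tilde\mu_n$ are exponentially tight in the uniform topology (indeed supported in $\mathcal K$). Since the identity map $(\Omega(\R^2),\rho)\to((\R^2)^{[0,1]},\text{pointwise})$ is continuous and the LDP with good rate function $I_0$ holds for the pushed-forward laws in the coarser topology, the inverse contraction principle (see \cite{dembo2009}) transfers the LDP to $(\Omega(\R^2),\rho)$ with the same good rate function $I_0$; specializing to Borel $\Gamma\subset\Omega(\R^2)$ gives the two-sided bound in the statement. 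The one genuinely technical step is the identification of $J$ with $I_0$ in the previous paragraph --- reconciling the variational ``supremum over partitions'' expression with the integral functional and deducing the absolute-continuity dichotomy --- while stages one and three reduce, respectively, to Cram\'er's theorem and to the boundedness of the support of $\mu$.
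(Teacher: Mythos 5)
The paper does not actually prove this theorem: it is stated as the classical Mogulskii theorem and the reader is referred to \cite{dembo2009}, so there is no internal proof to compare against. Your outline correctly reproduces the standard proof from that reference (Cram\'er's theorem for the finite-dimensional marginals, Dawson--G\"artner, identification of the projective-limit rate function with $I_0$, and strengthening to the uniform topology), and it rightly exploits the one simplification available here, namely that the compact support of $\mu$ makes the sample paths uniformly $R$-Lipschitz so that exponential tightness in $(\Omega(\R^2),\rho)$ is immediate.
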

The same result holds also in the subspace $\Omega_0(\R^2)$ consisting
only of paths starting at the origin (see \cite{dembo2009} for details), or at any other point.

More generally, we can prove a version of the Mogulskii's theorem where
the starting point is chosen uniformly at random.
\begin{thm}
    \label{thm:mogulskii2}
    Suppose that $E_n\subset\R^2$ are open, $E_n\subset
    E_{n+1}$, and $E=\cup_n{E_n}$ is bounded. Let $\tilde\upsilon_n$ be
    the uniform probability measure on $E_n$, and let $V_n$ be a random variable
    with the probability law $\tilde\upsilon_n$.
    Denote by $\tilde\mu_n$ the probability law of the random path
    $\tilde\Psi\left(V_n,\frac{X_1}{n},\ldots,\frac{X_n}{n}\right)$, where $X_0,\ldots,X_n$ are i.i.d.
    random variables with the probability law $\mu$. Then the function
    $I_E:\Omega(\R^2)\to[0,\infty]$ defined by
    $$
    I_E(\phi) = \left\{
        \begin{array}{cl}
            \int_{0}^{1}{\Lambda^*(\phi'(t)) dt},&\quad \text{if
        }\phi\in\Omega_{\AC}(\R^2), \phi(0)\in \cl E\\
        \infty,&\quad\text{otherwise}
        \end{array}
        \right.
    $$
    where $\cl E$ denotes the closure of $E$, is a good rate function, and for any Borel set
    $\Gamma\subset\Omega(\R^2)$ we have
    $$
    -\inf_{x\in\Gamma^{\circ}}{I_E(x)} \leq
    \liminf_{n\to\infty}{\frac{1}{n}\log{\tilde\mu_n(\Gamma)}}\leq
    \limsup_{n\to\infty}{\frac{1}{n}\log{\tilde\mu_n(\Gamma)}}\leq
    -\inf_{x\in\cl\Gamma}{I_E(x)},
    $$
    where $\Gamma^{\circ}$ denotes the interior of $\Gamma$ and
    $\cl\Gamma$ denotes the closure of $\Gamma$.
\end{thm}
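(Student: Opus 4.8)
The plan is to reduce the statement to the fixed-starting-point version of Mogulskii's theorem (Theorem \ref{thm:mogulskii} together with its stated extension to an arbitrary starting point) by decoupling the random starting point $V_n$ from the increments $X_1/n,\dots,X_n/n$. Observe that the path $\tilde\Psi_n(V_n, X_1/n,\dots,X_n/n)$ is exactly the translate by $V_n$ of the path $\tilde\Psi_n(0, X_1/n,\dots,X_n/n)$, and the two families $(V_n)$ and $(X_i)$ are independent. So $\tilde\mu_n$ is the law of $V_n + \xi_n$, where $\xi_n$ has law $\tilde\mu_n^0$ (the measure from Theorem \ref{thm:mogulskii} with starting point $0$) and $V_n$ is independent with law $\tilde\upsilon_n$ on $E_n$.

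First I would record that $I_E$ is a good rate function. Lower semicontinuity and compactness of sublevel sets follow exactly as for $I_0$ in Theorem \ref{thm:mogulskii}, once one notes that the extra constraint $\phi(0)\in\cl E$ is a closed condition (evaluation at $0$ is continuous on $\Omega(\R^2)$ with the $\rho$ metric) and $\cl E$ is compact since $E$ is bounded; intersecting the compact sublevel sets of $I_0$-type functionals (translated appropriately) with this compact set keeps them compact. Concretely, $\{\phi : I_E(\phi)\le\alpha\}$ is the image of the compact set $\{(\text{starting point in }\cl E)\}\times\{\psi : I_0(\psi)\le\alpha\}$ under the continuous addition map, hence compact.

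Second, for the large deviation bounds I would invoke the contraction/combination principle for products. Since $E$ is bounded, the laws $\tilde\upsilon_n$ of $V_n$ are exponentially tight and in fact satisfy a (degenerate) LDP with speed $n$ and rate function equal to $0$ on $\cl E$ and $+\infty$ off it: for any open $U$ meeting $E$ one has $\tilde\upsilon_n(U)\to$ a positive constant eventually (so $\frac1n\log\tilde\upsilon_n(U)\to 0$), and for any closed set $F$, $\frac1n\log\tilde\upsilon_n(F)\le 0$ always, with the bound being $-\infty$ only in the trivial sense; the key point is $\limsup \frac1n\log \tilde\upsilon_n(F) \le -\inf_F J$ where $J=0$ on $\cl E$. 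Combined with the full LDP for $\xi_n$ from Theorem \ref{thm:mogulskii} (with rate $I_0$), the pair $(V_n,\xi_n)$ satisfies an LDP on $\R^2\times\Omega(\R^2)$ with rate function $(v,\psi)\mapsto J(v)+I_0(\psi)$ by independence. Applying the contraction principle to the continuous map $(v,\psi)\mapsto v+\psi$ from $\R^2\times\Omega(\R^2)$ to $\Omega(\R^2)$ yields an LDP for $\tilde\mu_n$ with rate function
$$
\tilde I(\phi) = \inf\{\,J(v)+I_0(\psi) : v+\psi = \phi\,\} = \inf_{v\in\cl E} I_0(\phi - v),
$$
and since $I_0(\phi-v) = \int_0^1 \Lambda^*(\phi'(t))\,dt$ whenever $\phi\in\Omega_{\AC}$ and $\phi(0)=v$ (the integrand does not see the translation), this infimum equals $I_E(\phi)$: it is finite exactly when $\phi\in\Omega_{\AC}$ and $\phi(0)\in\cl E$, in which case one takes $v=\phi(0)$.

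The main obstacle is the degeneracy of the LDP for $V_n$: the rate function $J$ vanishes identically on $\cl E$, so one is outside the comfortable regime of a good rate function with compact level sets that separate points, and one must be a little careful that the combination principle and the contraction principle still apply. I would handle this either by a direct argument --- the upper bound by exponential tightness of $\tilde\upsilon_n$ plus a covering of $\cl E$ by finitely many small balls and the upper bound in Theorem \ref{thm:mogulskii} applied to each translated set, the lower bound by writing, for an open $\Gamma$ and a point $\phi\in\Gamma$ with $I_E(\phi)<\infty$ and $\phi(0)=v\in E$, the estimate $\tilde\mu_n(\Gamma)\ge \tilde\upsilon_n(B)\cdot\inf_{w\in B}\tilde\mu_n^0(\Gamma - w)$ for a small ball $B\ni v$ on which $\Gamma-w$ still contains a fixed open neighborhood of $\phi-v$, then letting $n\to\infty$ and shrinking $B$ --- or by citing the version of the contraction principle valid for (not necessarily good) rate functions when one of the factors is exponentially tight. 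A minor additional point, already implicit in the statement, is that one needs $E_n\subset E_{n+1}$ only to make $\tilde\upsilon_n(U)$ eventually bounded below for open $U$ meeting $E$; this is what gives the sharp $0$ in the lower bound for $J$.
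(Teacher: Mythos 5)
Your proposal is correct, but it takes a genuinely different route from the paper's. The paper argues by hand: it disintegrates $\tilde\mu_n$ over the starting point, writes $\tilde\mu_n(B_{\delta})=\int_{F_{\delta,n}}\tilde\mu_n^x(B^x_{\delta})\,\tilde\upsilon_n(dx)$ with $F_{\delta,n}=E_n\cap\{\phi(0):\phi\in B_\delta\}$, translates to the origin via $\sigma(\phi)=\phi-\phi(0)$ to obtain the sandwich $\tilde\upsilon_n(F_{\delta/2,n})\,\tilde\mu^0_n(\sigma(B_{\delta/2}))\leq\tilde\mu_n(B_{\delta})\leq\tilde\upsilon_n(F_{\delta,n})\,\tilde\mu^0_n(\sigma(B_{\delta}))$, and for the upper bound exploits that $\tilde\mu_n$ charges only $R$-Lipschitz paths, so a closed $\Gamma$ may be replaced by a compact one (Arzel\`a--Ascoli) and covered by finitely many balls on which $I_0$ is within $\e$ of its value at the center. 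You instead package the same independence as a product LDP for the pair $(V_n,\xi_n)$ followed by the contraction principle under the addition map; your identification of the contracted rate function with $I_E$ is correct, and goodness of $I_E$ falls out for free. Two remarks. First, your worry about ``degeneracy'' is unfounded: $J$ ($=0$ on $\cl E$, $+\infty$ off it) is a bona fide good rate function since $\cl E$ is compact, and in fact both marginal families are carried by fixed compact sets ($\tilde\upsilon_n$ by $\cl E$; $\tilde\mu^0_n$ by the $R$-Lipschitz paths from the origin), so exponential tightness is automatic and the standard product-LDP and contraction-principle theorems apply verbatim --- no ad hoc repair is needed, though your fallback direct argument (which for the lower bound essentially reproduces the paper's) would also work. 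Second, what each approach buys: yours is shorter and more modular, at the price of citing the product LDP for independent sequences, whose proof is itself the kind of covering argument the paper writes out explicitly; the paper's version is self-contained modulo Theorem \ref{thm:mogulskii} and makes visible exactly where the hypotheses $E_n\subset E_{n+1}$ and $E=\cup_n E_n$ bounded enter (namely, in keeping $\tilde\upsilon_n(F_{\delta/2,n})$ bounded away from zero), which is the same role you correctly assign them in establishing the sharp lower bound for $J$.
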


We shall refer to Theorem \ref{thm:mogulskii2} as \emph{untethered}
Mogulskii's theorem. If $E_n=\cup_{k=1}^n A_k$, the projection of $G_n$ onto the
first two coordinates, then we denote the corresponding $I_E$ simply by
$I$.  It is useful to notice that if $\phi\in\homt^R(X)$ then
$\phi(0)\in A=\cup_{n}{A_n}$.

Our particular choice of the probability law $\mu$ leads to several
useful properties of the rate functions $I_0$ and $I_E$.
\begin{prp}
    \label{prp:I_prop}
    Let $J$ be either $I_0$ or $I_E$.
    \begin{enumerate}
        \item \label{ii:1}
            $\D_{J}$ $\subset$
            $\{\phi\in\Omega_{\AC}(\R^2) : \|\phi'(t)\|<R \text{ a.e. on } [0,1]\}$
            $\subset$ $\Omega_{\AC}(\R^2)\cap\Omega^R(\R^2)$,
            where $\Omega^R(\R^2)$ denotes the set of
            paths with Lipschitz constant bounded by $R$.
        \item \label{ii:2}
            Let $\gamma\in\D_J$ be a constant speed parametrization of a curve
            or a free loop $\hat\gamma$, and let $\Gamma$ be the set of
            all parametrizations of $\hat\gamma$. Then
            $$
            \inf_{\phi\in\Gamma}{J(\phi)} = J(\gamma)
            $$
        \item \label{ii:3}
            Suppose that $\gamma\in\D_J$ is a (non-constant) path with
            constant speed parametrization, and let $\phi\in\D_J$ be a path such that
            $L(\phi)\geq L(\gamma)+\e$. Then there exists a constant
            $c>0$, depending on $\gamma$, such that
            $J(\phi)-J(\gamma)\geq c\e$.
    \end{enumerate}
\end{prp}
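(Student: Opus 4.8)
The plan is to establish the three items in order, relying on the explicit form $\Lambda^*$ has by Proposition~\ref{prp:rate_func}: it is a rotation-invariant, strictly convex good rate function with effective domain exactly $B_R$. Write $g(r)=\Lambda^*(y)$ for $\|y\|=r$, so $g:[0,R)\to[0,\infty)$ is strictly convex, finite, and, being the radial profile of a rotation-invariant strictly convex function with minimum at the origin, strictly increasing. For item~\eqref{ii:1}, if $\phi\in\D_J$ then $\int_0^1\Lambda^*(\phi'(t))\,dt<\infty$, which forces $\phi'(t)\in\D_{\Lambda^*}=B_R$, i.e.\ $\|\phi'(t)\|<R$, for a.e.\ $t$; since $\phi$ is absolutely continuous this gives $\|\phi(t)-\phi(s)\|\le\int_s^t\|\phi'\|\le R|t-s|$, so $\phi$ has Lipschitz constant at most $R$, and the chain of inclusions follows.

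For item~\eqref{ii:2}: let $\gamma$ be the constant-speed parametrization of $\hat\gamma$, so $\|\gamma'(t)\|\equiv L(\gamma)=:\ell<R$ a.e., and $J(\gamma)=g(\ell)$. Any other parametrization $\phi\in\Gamma$ satisfies $\int_0^1\|\phi'(t)\|\,dt=L(\phi)=\ell$ (length is a curve invariant, and absolute continuity is preserved under monotone reparametrization within $\D_J$), while $J(\phi)=\int_0^1 g(\|\phi'(t)\|)\,dt$. By Jensen's inequality applied to the convex function $g$,
$$
\int_0^1 g(\|\phi'(t)\|)\,dt \ \ge\ g\!\left(\int_0^1\|\phi'(t)\|\,dt\right)=g(\ell)=J(\gamma),
$$
and equality is attained by $\gamma$ itself, giving $\inf_{\phi\in\Gamma}J(\phi)=J(\gamma)$. (One must check $\gamma\in\Gamma$, i.e.\ the constant-speed parametrization is a genuine reparametrization of $\hat\gamma$; this is exactly the statement recalled in the Preliminaries that rectifiable curves/free loops admit a constant-speed parametrization.)

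For item~\eqref{ii:3}: again $J(\gamma)=g(\ell)$ with $\ell=L(\gamma)<R$, and for $\phi\in\D_J$ with $L(\phi)=\int_0^1\|\phi'\|\,dt=:\ell'\ge\ell+\e$, Jensen gives $J(\phi)\ge g(\ell')$. Since $g$ is convex and differentiable (or at least admits one-sided derivatives) on $[0,R)$ with $g'$ nondecreasing, for every $s\in[\ell,R)$ we have $g(s)-g(\ell)\ge g'_+(\ell)\,(s-\ell)$, where $g'_+(\ell)>0$ because $g$ is strictly convex with its minimum at $0<\ell$. Hence with $c:=g'_+(\ell)>0$, which depends only on $\gamma$ through $\ell=L(\gamma)$,
$$
J(\phi)-J(\gamma)\ \ge\ g(\ell')-g(\ell)\ \ge\ c\,(\ell'-\ell)\ \ge\ c\,\e.
$$
Note this inequality is also used with $\ell'$ possibly equal to $R$ only in the limiting sense; since $\phi\in\D_J$ forces $\|\phi'\|<R$ a.e., $\ell'<R$ strictly unless $\phi\notin\D_J$, in which case $J(\phi)=\infty$ and the bound is trivial.

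The main obstacle is not any single inequality — those are all Jensen plus convexity — but the bookkeeping around reparametrization: making sure that the constant-speed parametrization of a curve (or free loop) in $\D_J$ really lands back in $\D_J$ and in the parametrization class $\Gamma$, and that "length'' for free loops (where the reparametrizations are orientation-preserving homeomorphisms of $\S^1$ rather than of $[0,1]$) is handled consistently with the $\phi(0)\in\cl E$ constraint built into $I_E$. Once one is careful that $\Lambda^*$ being finite exactly on $B_R$ (Proposition~\ref{prp:rate_func}\eqref{i:4}) pins down the speed bound, and that strict convexity of $\Lambda^*$ gives a strictly positive slope $g'_+(\ell)$ at any $\ell\in(0,R)$, the rest is routine.
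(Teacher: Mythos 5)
Your proof is correct and follows essentially the same route as the paper's: the effective domain $\D_{\Lambda^*}=B_R$ pins down the a.e.\ speed bound for part \eqref{ii:1}, Jensen's inequality applied to the radial profile $\tilde\Lambda^*$ gives $J(\phi)\geq\tilde\Lambda^*(L(\phi))$ with equality exactly at constant speed for part \eqref{ii:2}, and the positive slope of a supporting line of $\tilde\Lambda^*$ at $L(\gamma)$ yields the linear gap in part \eqref{ii:3}. Your closing remarks on reparametrization bookkeeping and the $\phi(0)\in\cl E$ constraint are reasonable but not load-bearing, since any parametrization outside $\D_J$ contributes $J=\infty$ and cannot lower the infimum.
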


\subsection{Path localization results}
Mogulskii's theorem and the properties of the rate functions $I_0$ and
$I_E$ allow us to investigate the behavior of $\tilde\mu_n$ when restricted to a
particular set $\Gamma\subset\Omega(\R^2)$. For example, let $\Gamma$
consist of paths starting at the origin and ending within
the closed ball $\cl B_r(a)=\{x\in\R^2:\dd(x,a)\leq r\}$, $a\in\R^2$.
Suppose also that $0\notin \cl B_r(a)$ and $r+\|a\|<R$. Then the
following holds:
\begin{cor}
\label{cor:straight_path}
Let $x^*\in \cl B_r(a)$ be the point closest to the origin, and let
$\hat\gamma^*=\pi_{\Omega}([0,x^*])$. Take $\delta>0$ and let
$\hat\Gamma_{\delta}=\{\hat\gamma\in\pi_{\Omega}(\Gamma)|\hat\rho_{\Omega}(\hat\gamma,
\hat\gamma^*)\geq\delta\}$,
$\Gamma_{\delta}=\pi_{\Omega}^{-1}(\hat\Gamma_{\delta})\cap\Gamma$. Then there
exists a constant $c>0$ (depending on $x^*$ and $r$) such that
$$
\limsup_{n\to\infty}{\frac{1}{n}\log{\frac{\tilde\mu_n(\Gamma_{\delta})}{\tilde\mu_n(\Gamma)}}}\leq
-c\delta^2
$$
\end{cor}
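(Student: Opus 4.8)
The plan is to combine the upper bound from the untethered Mogulskii theorem with the quantitative lower bound on the rate function provided by Proposition \ref{prp:I_prop}\eqref{ii:3}. First I would record the key observation that any path $\gamma\in\Gamma$ has $I_0(\gamma)\geq\int_0^1\Lambda^*(\gamma'(t))\,dt$, and that $\Lambda^*$ is a strictly convex good rate function that is invariant under rotations (Proposition \ref{prp:rate_func}). Since $\Lambda^*$ is minimized at the origin with $\Lambda^*(0)=0$, the minimizer of $I_0$ over $\Gamma$ is the constant-speed parametrization of the straight segment $[0,x^*]$, where $x^*$ is the nearest point of $\cl B_r(a)$ to the origin; write $\ell=\|x^*\|=\dd(0,\cl B_r(a))$ and note $\hat\gamma^*=\pi_\Omega([0,x^*])$. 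By Proposition \ref{prp:I_prop}\eqref{ii:2}, $\inf_{\phi\in\pi_\Omega^{-1}(\hat\gamma^*)}I_0(\phi)=I_0(\gamma^*)$, where $\gamma^*$ is the constant-speed representative; call this value $m^*$.

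The heart of the argument is to show a uniform gap: there is $c>0$ so that $\inf_{x\in\cl\Gamma_\delta}I_0(x)\geq m^*+c\delta^2$. To see this I would argue that any $\phi\in\Gamma_\delta$ must be "far" from the optimal segment in a way that forces extra rate. There are two mechanisms: either $\phi$ is longer than the optimal segment by a definite amount, in which case Proposition \ref{prp:I_prop}\eqref{ii:3} gives $I_0(\phi)-m^*\geq c'\e$ where $\e$ is the excess length; or $\phi$ has essentially the right length but is not close to $[0,x^*]$, which is impossible because a rectifiable path from $0$ to $\cl B_r(a)$ whose length is within $\e$ of $\ell$ must lie within $O(\sqrt{\e})$ of the segment $[0,x^*]$ in the $C^0$ (in fact Hausdorff, hence $\hat\rho_\Omega$) sense — this is the classical geometric stability estimate for near-geodesics, and the $\sqrt{\e}$ scaling is exactly where the $\delta^2$ in the statement comes from. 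Concretely, if $\hat\rho_\Omega(\pi_\Omega(\phi),\hat\gamma^*)\geq\delta$ then the path, having both endpoints pinned near the ends of $[0,x^*]$, must bulge out by at least roughly $\delta$, forcing $L(\phi)\geq\ell+c''\delta^2$; feeding this excess into part \eqref{ii:3} yields $I_0(\phi)\geq m^*+c\delta^2$. Taking the infimum over $\Gamma_\delta$ and then over its closure (using lower semicontinuity of $I_0$ and that $\cl\Gamma_\delta\subset\cl\Gamma$ consists of paths of the same type) preserves the bound.

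With the gap in hand I would finish via Theorem \ref{thm:mogulskii2} (or its tethered version Theorem \ref{thm:mogulskii}, since here the starting point is fixed at the origin and $0\in\cl E$): applying the large deviation upper bound to $\cl{\Gamma_\delta}$ and the lower bound to $\Gamma^\circ$ — noting $\gamma^*$ lies in the interior of $\Gamma$ as an absolutely continuous path, so $\inf_{\Gamma^\circ}I_0\leq m^*$ — gives
$$
\limsup_{n\to\infty}\frac1n\log\tilde\mu_n(\Gamma_\delta)\leq -\inf_{x\in\cl\Gamma_\delta}I_0(x)\leq -(m^*+c\delta^2),
$$
$$
\liminf_{n\to\infty}\frac1n\log\tilde\mu_n(\Gamma)\geq -\inf_{x\in\Gamma^\circ}I_0(x)\geq -m^*.
$$
Subtracting and using $\limsup(a_n-b_n)\leq\limsup a_n-\liminf b_n$ yields $\limsup_n\frac1n\log\big(\tilde\mu_n(\Gamma_\delta)/\tilde\mu_n(\Gamma)\big)\leq -c\delta^2$, as required. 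One technical point to handle carefully is that $\Gamma$ and $\Gamma_\delta$ need not be open or closed, so I must verify that $\inf_{\Gamma^\circ}I_0$ and $\inf_{\cl\Gamma_\delta}I_0$ are controlled by $m^*$ and $m^*+c\delta^2$ respectively; the former follows because the optimal segment can be approximated from within $\Gamma^\circ$ by slightly shortened/perturbed paths with $I_0$ arbitrarily close to $m^*$, and the latter from lower semicontinuity together with the fact that taking closures only adds paths ending in $\cl B_r(a)$ with $\hat\rho_\Omega$-distance $\geq\delta$ from $\hat\gamma^*$. I expect the main obstacle to be making the geometric stability estimate — "near-geodesics are $\sqrt\e$-close to the geodesic, quantitatively, and this survives passage through $\pi_\Omega$ and $\hat\rho_\Omega$" — fully rigorous while keeping the constant $c$ explicitly dependent only on $x^*$ and $r$ as claimed.
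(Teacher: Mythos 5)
Your proposal is correct and follows essentially the same route as the paper: Mogulskii's upper bound on $\cl\Gamma_{\delta}$ and lower bound on $\Gamma^{\circ}$, the geometric fact that $\hat\rho_{\Omega}$-distance $\geq\delta$ from $[0,x^*]$ forces $L(\gamma)\geq\sqrt{\|x^*\|^2+\delta^2}\geq\|x^*\|+m\delta^2$, and Proposition \ref{prp:I_prop}\eqref{ii:3} to convert the excess length into an excess of $c\delta^2$ in the rate. The "near-geodesic stability" step you flag as the main obstacle is exactly the step the paper dispatches as "simple geometric considerations," so no substantive difference remains.
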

In other words, $\tilde\mu_n$ restricted to the above $\Gamma$ become
overwhelmingly concentrated around the shortest paths.

It is reasonable to expect a similar concentration result for $\nu_n$.
Unfortunately, as follows from an earlier discussion, investigating the
behavior of $\nu_n$ requires us to consider ratios of the form
$\frac{\mu_n(Q_n)}{\mu_n(P_n)}$, $Q_n\subset P_n$, rather than
$\frac{\mu_n(Q)}{\mu_n(P)}$ for fixed $Q\subset P$. Hence, a direct
application of Mogulskii's theorem is not feasible. In the next section
we detail our approach to overcome this difficulty.
\section{Typical loops in $\homt(X)$}
\label{sec:loop_ldp}
Before we rigorously state our main result we
need to take care of a small technicality.
Unlike the situation in Corollary \ref{cor:straight_path}, where the minimizing path belongs to
the set under consideration, $\homt(X)$ does {\em not} contain any loop minimizing the rate. However, $L(\cdot)$ does attain its infimum on
$\cl\homt(X)$, the closure of $\homt(X)$ in $\Omega(\R^2)$, and
consequently on $\pi_{\LL}(\cl\homt(X))$.
Moreover, the shortest loop in
$\cl\homt(X)$ is unique up to reparametrization and is, in fact,
piecewise linear.

\begin{lem}
\label{lem:shortest_loop}
$\pi_{\LL}(\cl\homt(X))$ contains a unique free loop of the
shortest length. Moreover, this shortest free loop is piecewise linear with
vertices in $Z$.
\end{lem}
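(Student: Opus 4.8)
The plan is to show existence of a shortest loop first, then piecewise‐linearity, then uniqueness. For existence, I would argue that $L(\cdot)$ attains its infimum on $\pi_{\LL}(\cl\homt(X))$. Fix a minimizing sequence of free loops $\hat\gamma_k$ in $\hat\homt(X)$ with $L(\hat\gamma_k)\to\ell:=\inf_{\hat\homt(X)}L$; passing to constant–speed parametrizations $\gamma_k$, these are uniformly Lipschitz (speed $\le L(\hat\gamma_k)$, bounded), and we may translate so that $\gamma_k(0)$ stays in a fixed compact set (here the boundedness of $A=\cup_n A_n$, noted in the excerpt, is convenient, but any compact set works since we can recenter). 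By Arzelà–Ascoli a subsequence converges uniformly to a loop $\gamma\in\cl\homt(X)$, and lower semicontinuity of length under uniform convergence gives $L(\gamma)\le\ell$; since $\gamma\in\cl\homt(X)$ we also have $L(\gamma)\ge\ell$, so $\gamma$ is a shortest loop in $\pi_{\LL}(\cl\homt(X))$.

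Next I would establish that any shortest loop $\gamma$ is piecewise linear with vertices in $Z$. The key geometric fact is that $\gamma$ must be "taut": on any arc of $\gamma$ whose image lies in the open set $X$ (away from the punctures) and which is homotopic rel endpoints to the straight segment between its endpoints within $X$, replacing the arc by that segment cannot increase length and cannot leave the homotopy class — and if the arc is not already that segment the replacement strictly shortens $\gamma$, contradicting minimality. (One must take the replacement arc short enough that the straight‐line homotopy stays in $X$; this is where $\reach(Z)$ enters.) Consequently $\gamma$ is locally a straight segment except possibly at points of $Z$ — but wait: $\gamma$ is a loop in $\cl\homt(X)\subset\Omega(\R^2)$, so it is allowed to touch the punctures. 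A minimal loop therefore consists of straight segments whose only possible "corners" are at punctures $z_i\in Z$; and it cannot have a corner anywhere else, nor can it pass through a point of $Z$ without turning (else it could be shortened by cutting the corner, again using that a small straight homotopy stays in $X$ and preserves the class). Since $\gamma$ has finite length and $Z$ is finite, only finitely many segments occur, so $\gamma$ is piecewise linear with all vertices in $Z$. I expect this tautness/replacement argument, and in particular checking that the local straightening homotopies do not change the free homotopy class in $\homt(X)$, to be the main obstacle — it requires a careful use of the covering‐space description of $\homt(X)$ and of $\reach(Z)$.

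Finally, uniqueness. Suppose $\gamma_1,\gamma_2$ are two shortest free loops, both piecewise linear with vertices in $Z$ and both of length $\ell$. Lift to the universal cover $\widetilde X$ of $X$ (a tree‐like, $\mathrm{CAT}(0)$ space after giving the punctures the structure of a negatively/non‐positively curved surface, or more elementarily work with geodesics of the hyperbolic or flat metric on $X$): a nontrivial free homotopy class of a surface with punctures contains a \emph{unique} geodesic representative in any complete nonpositively curved metric, and the Euclidean "taut string" loop coincides with this geodesic. Alternatively, and staying purely Euclidean: if $\gamma_1\ne\gamma_2$ as free loops, form the "midpoint" loop — parametrize both on $\S^1$ compatibly (matching the cyclic order in which they wind around the same punctures) and take $\gamma_t=(1-t)\gamma_1+t\gamma_2$; this is freely homotopic to $\gamma_1$, lies in $X$ for all $t$ provided the two loops are close (and if they are not close one interpolates along a shortest path between them in $\cl\homt(X)$), and by strict convexity of Euclidean length along non‐parallel linear interpolations $L(\gamma_{1/2})<\tfrac12(L(\gamma_1)+L(\gamma_2))=\ell$, contradicting minimality. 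Hence $\gamma_1$ and $\gamma_2$ define the same free loop. The subtlety is to handle the case where the midpoint loop is pushed onto a puncture: there $L(\gamma_{1/2})\le\ell$ with equality only if $\gamma_1,\gamma_2$ agree on that sub‐arc, so one still gets $\gamma_1=\gamma_2$. I would present the convexity argument as the main line and mention the $\mathrm{CAT}(0)$ viewpoint as an alternative.
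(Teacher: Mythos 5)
Your three-step architecture (existence by Arzel\`a--Ascoli, piecewise linearity by local straightening, uniqueness by convexity of length) is a genuinely different route from the paper, which instead works in the thickened space $X^{\delta}$, uses the fact that $X^{\delta}$ is NPC with Hadamard universal cover to obtain (via Cartan's theorem) a unique closed geodesic in $\hat\homt(X^{\delta})$ for each $\delta<\reach(Z)$, and then passes to the limit $\delta\to 0$. The decisive gap in your version is in the uniqueness step. Strict convexity of Euclidean length along the linear interpolation $\gamma_t=(1-t)\gamma_1+t\gamma_2$ fails exactly when $\gamma_1'$ and $\gamma_2'$ are positively parallel almost everywhere --- e.g.\ when $\gamma_2$ is a translate of $\gamma_1$, or more generally whenever the two loops bound a flat strip. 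This is not a removable technicality: it is precisely the case the paper must exclude separately (non-intersecting minimizers lift to parallel geodesic lines in the Hadamard cover, which by the flat strip theorem either coincide or bound a flat strip; the strip is ruled out because every minimizer must turn around a puncture), and your argument is silent on it. Your fallback claim that a free homotopy class ``contains a unique geodesic representative in any complete nonpositively curved metric'' is false as stated (a flat torus or cylinder carries a continuum of parallel closed geodesics in one class); uniqueness requires either strict negative curvature or exactly the flat-strip analysis you are trying to bypass. Finally, for two minimizers that are not uniformly close, ``interpolating along a shortest path between them in $\cl\homt(X)$'' is undefined, and it is not shown that the interpolating loops stay in $X$, stay in the class, or have length at most $\ell$.

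Two further points. In the existence step you may not ``recenter'' the loops: translating a loop changes its free homotopy class relative to the fixed puncture set $Z$. Boundedness holds for a different reason --- a non-contractible loop of length at most $\ell+1$ must have some puncture in a bounded complementary component of its image, so its image lies in a fixed ball around $Z$. In the structure step, the finiteness of the number of segments follows from the lower bound $2\reach(Z)$ on the length of a segment joining distinct punctures rather than from finiteness of $Z$ alone, and the step you yourself flag as the main obstacle --- that locally straightening an arc of a loop in $\cl\homt(X)$, which may pass through punctures, produces a loop that is still in $\cl\homt(X)$ --- is the substantive content of that part of the argument and is not carried out. As it stands, the proposal is a plausible outline whose hardest steps (class preservation under straightening, and the degenerate case of the convexity argument) are exactly the ones left open.
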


We let $\hat\gamma^*$ denote the shortest
free loop in $\pi_{\LL}(\cl\homt(X))$. Our main result shows that
$\hat\nu_n$ become overwhelmingly concentrated around $\hat\gamma^*$ as
$n\to\infty$.
\begin{thm}
\label{thm:main_free}
For each $\delta>0$ we have
$$
\limsup_{n\to\infty}{\frac{1}{n}\log{\hat\nu_n(\hat\Gamma_{\delta})}}\leq
-c\delta^2,
$$
where $c>0$ is a constant, and
$\hat\Gamma_{\delta}=\{\hat\gamma\in\hat\homt(X)|\hat\rho_{\LL}(\hat\gamma,
\hat\gamma^*)\geq\delta\}$.
\end{thm}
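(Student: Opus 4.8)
The plan is to reduce Theorem~\ref{thm:main_free} to the untethered Mogulskii's Theorem~\ref{thm:mogulskii2} by passing from the loop picture to the path picture via the ``opening'' map $\iota$ and the finite-dimensional approximation. Recall the identity $\nu_n(\Gamma) = \mu_n(\iota(\Gamma\cap\homt^R_n(X)))/\mu_n(\iota(\homt^R_n(X)))$, and that $\mu_n$ is, up to the harmless rescaling by $\tilde\Psi_n$ versus $\tilde\Psi$, precisely the law governed by $I = I_E$ with $E_n = \cup_{k\le n} A_k$. So I would first translate the event $\{\hat\rho_{\LL}(\hat\gamma,\hat\gamma^*)\ge\delta\}$ back upstairs: if $\hat\nu_n(\hat\Gamma_\delta)$ is to be estimated, it suffices to bound $\nu_n$ of the set $\Gamma_\delta$ of loops in $\homt^R_n(X)$ whose projection is $\delta$-far from $\hat\gamma^*$, and then $\mu_n$ of the opened paths $\iota(\Gamma_\delta\cap\homt^R_n(X))$. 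Write $P_n = \iota(\homt^R_n(X))$ and $Q_n = \iota(\Gamma_\delta\cap\homt^R_n(X))$, so the goal is $\limsup \frac1n\log(\mu_n(Q_n)/\mu_n(P_n)) \le -c\delta^2$.

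The core of the argument is a two-sided estimate. For the denominator, I need a lower bound: $\homt^R_n(X)$ contains piecewise linear loops that approximate $\hat\gamma^*$ arbitrarily well for $n$ large (by Proposition~\ref{prp:path_approx}, since $\hat\gamma^*$ is piecewise linear by Lemma~\ref{lem:shortest_loop}), so $P_n$ contains paths in a small $C^0$-neighborhood of a fixed parametrization $\gamma^*$ of the opened shortest loop; choosing an open such neighborhood $U$ inside $\homt(X)$'s opening, the LDP lower bound gives $\liminf \frac1n\log\mu_n(P_n) \ge \liminf\frac1n\log\mu_n(U) \ge -\inf_{U} I \ge -I(\gamma^*) - o(1) = -J(\gamma^*)$ where $J(\gamma^*) = I_E(\gamma^*)$; since $\gamma^*$ is a constant-speed parametrization of a minimal loop and $\Lambda^*$ is minimized consistently with length by Proposition~\ref{prp:I_prop}(\ref{ii:2})--(\ref{ii:3}), $I(\gamma^*)$ equals the minimum value $m^* := \inf\{I(\phi): \pi_{\LL}(\text{closure of }\phi) = \hat\gamma^*\text{ up to the opening}\}$. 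For the numerator I need an upper bound via the LDP upper bound: $\limsup\frac1n\log\mu_n(Q_n)\le -\inf_{\cl{Q}} I$ where $\cl{Q}$ is the closure of $\bigcup_n Q_n$ (one must be careful that $Q_n$ is not a fixed set, but $Q_n \subset Q$ for a fixed closed set $Q$ consisting of all opened loops whose closures lie in $\cl{\homt^R(X)}$ and are $\delta$-far from $\hat\gamma^*$, using monotonicity of the $A_n$; this is exactly the trick flagged at the end of Section~\ref{sec:prelim} for handling ratios). Then the exponential rate of the ratio is at most $-(\inf_{\cl Q} I - m^*)$.

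It remains to show the \emph{gap} $\inf_{\cl Q} I - m^* \ge c\delta^2$. Here is where Proposition~\ref{prp:I_prop}(\ref{ii:3}) does the heavy lifting together with Lemma~\ref{lem:shortest_loop}: any $\phi\in\cl Q$ is (the opening of) an absolutely continuous loop $\psi$ freely homotopic into $\cl\homt(X)$ with $\hat\rho_{\LL}(\pi_{\LL}\psi, \hat\gamma^*)\ge\delta$. Because $\hat\gamma^*$ is the \emph{unique} shortest loop, a stability/quantitative-uniqueness statement must hold: being $\delta$-far in $C^0$ forces excess length $L(\psi) \ge L(\hat\gamma^*) + \kappa\delta^2$ for some $\kappa>0$ depending only on the geometry of $Z$ and the homotopy class. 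I would prove this by a compactness argument — if not, there is a sequence $\psi_k$ with $L(\psi_k)\to L(\hat\gamma^*)$ but staying $\delta$-far, and by Arzelà--Ascoli (the speeds are bounded by $R$ on $\D_I$ by Proposition~\ref{prp:I_prop}(\ref{ii:1})) a subsequence converges to a shortest loop in $\cl\homt(X)$, which by uniqueness must be $\hat\gamma^*$, contradicting $\delta$-farness; the quadratic rate $\delta^2$ comes from the fact that near the polygonal geodesic the length functional grows quadratically in transverse displacement (second-order behavior of Euclidean length, the punctures forcing a genuinely strictly convex constraint away from the corners). Combining: $\inf_{\cl Q} I \ge$ (infimum over constant-speed reparametrizations, by Proposition~\ref{prp:I_prop}(\ref{ii:2})) $\ge m^* + c'\kappa\delta^2$ using part~(\ref{ii:3}) with $\e = \kappa\delta^2$. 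Feeding this back through the ratio and unwinding the passage $\hat\nu_n \to \nu_n \to \mu_n$ yields the claimed bound with $c = c'\kappa$.

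The main obstacle I anticipate is precisely this quantitative-uniqueness step — upgrading ``unique shortest loop'' to a quadratic lower bound on excess length as a function of $C^0$-distance — and, intertwined with it, the bookkeeping needed to handle $\mu_n(Q_n)/\mu_n(P_n)$ with moving sets $Q_n, P_n$ rather than a fixed pair, which requires sandwiching between fixed closed/open sets using the monotonicity $A_n\subset A_{n+1}$ and the density Proposition~\ref{prp:path_approx}, while checking that the opening map $\iota$ and the reparametrization freedom in passing to free loops do not distort the rate (controlled by Proposition~\ref{prp:I_prop}(\ref{ii:2})).
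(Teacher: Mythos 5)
Your overall architecture matches the paper's: reduce $\hat\nu_n$ to $\nu_n$ via the push-forward, write $\nu_n$ as the ratio $\mu_n(Q_n)/\mu_n(P_n)$, bound the numerator with the (untethered) Mogulskii upper bound after sandwiching the moving sets $Q_n$ inside a fixed closed set, and convert the resulting rate gap into $c\delta^2$ via Proposition \ref{prp:I_prop}. However, there is a genuine gap at the single most delicate point, the lower bound on the denominator. You assert that since $P_n=\iota(\homt^R_n(X))$ ``contains paths in a small $C^0$-neighborhood $U$ of $\gamma^*$,'' one gets $\liminf\frac1n\log\mu_n(P_n)\ge\liminf\frac1n\log\mu_n(U)$. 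This inequality is false: $P_n\cap U\neq\emptyset$ only gives $\mu_n(P_n)\ge\mu_n(P_n\cap U)$, and no fixed open set $U$ is contained in $P_n$, because membership in $P_n$ imposes the $n$-dependent \emph{closing constraint} $\|\psi(1)-\psi(0)\|<\frac{R}{n+1}$ (the opened loop must close up into a short edge), together with the homotopy condition on the closed-up loop. The event that the random walk returns to a ball of radius $O(1/n)$ around its start is a shrinking-target event; it happens to cost only a polynomial factor $O(1/n^2)$, so the exponential rate survives, but proving this is exactly the content of the paper's Lemma \ref{lem:lower_bound}, whose proof requires an exponential change of measure (Lemma \ref{lem:change_meas}), a maximal inequality (Lemma \ref{lem:max_ineq}), and a uniform local limit theorem (Lemma \ref{lem:max_path}) applied edge-by-edge along a piecewise linear approximation of the target loop. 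Your ``sandwich between fixed sets using monotonicity of $A_n$'' only addresses the starting-point distributions, not the closing constraint, so the denominator bound is not established by your argument. (A further wrinkle you gloss over: $\gamma^*$ has vertices at the punctures, so it lies in $\cl\homt(X)\setminus\homt(X)$, and one must approximate it from within the open class before applying any lower bound.)

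Secondarily, your quantitative-uniqueness step is not complete as stated. The compactness/Arzel\`a--Ascoli argument shows only that for each fixed $\delta$ the excess length is bounded below by some $\kappa(\delta)>0$; it does not produce the uniform quadratic rate $\kappa\delta^2$, which is what the theorem asserts. The paper gets the quadratic rate directly: if $\hat\rho_{\LL}(\hat\gamma,\hat\gamma^*)\ge\delta$ then some point of $\gamma$ lies at distance $\ge\delta$ from the image of $\hat\gamma^*$, forcing $L(\gamma)\ge\sqrt{\ell^2+\delta^2}$, and concavity of $\sqrt{\cdot}$ together with the bound $\delta<2R$ yields $L(\gamma)\ge\ell+m\delta^2$ with an explicit $m$; Proposition \ref{prp:I_prop}(\ref{ii:3}) then converts the length excess into a rate-function excess. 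You gesture at ``second-order behavior of Euclidean length'' but do not carry it out, and without it your argument proves only $\limsup\frac1n\log\hat\nu_n(\hat\Gamma_\delta)\le-c(\delta)$ for some unquantified $c(\delta)>0$, a strictly weaker statement.
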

Since $\hat\nu_n$ is a push forward of $\nu_n$ under $\pi_{\LL}$, Theorem
\ref{thm:main_free} is an immediate corollary of the following result.
\begin{thm}
\label{thm:main}
For each $\delta>0$ we have
$$
\limsup_{n\to\infty}{\frac{1}{n}\log{\nu_n(\Gamma_{\delta})}}\leq
-c\delta^2,
$$
where $c>0$ is a constant, and
$\Gamma_{\delta}=\pi_{\LL}^{-1}(\hat\Gamma_{\delta})$.
\end{thm}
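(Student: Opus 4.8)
The plan is to reduce the statement about the ratio-type measure $\nu_n$ to applications of the untethered Mogulskii's theorem (Theorem \ref{thm:mogulskii2}) together with the structural properties of the rate function $I$ collected in Proposition \ref{prp:I_prop}. Recall that $\nu_n(\Gamma_\delta)$ equals $\mu_n(\iota(\Gamma_\delta\cap\homt^R_n(X)))/\mu_n(\iota(\homt^R_n(X)))$, so we need an \emph{upper} bound on the numerator and a \emph{lower} bound on the denominator, both on the $\frac1n\log$ scale. The denominator is controlled from below by Mogulskii's lower bound applied to a neighborhood (in $\Omega(\R^2)$) of the constant-speed parametrization $\gamma^*$ of the shortest loop $\hat\gamma^*$: since $\gamma^*$ is piecewise linear with vertices in $Z$ and speed equal to its length $L^*=L(\hat\gamma^*)$, which we may assume is $<R$ (enlarging $R$ if necessary), we have $\gamma^*\in\D_I$, and for every open set $U$ containing (the opened path $\iota$ of) $\gamma^*$ one gets $\liminf_n \frac1n\log\mu_n(U)\ge -I(\gamma^*)$. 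The subtlety is that $\iota(\homt^R_n(X))$ must actually meet such a $U$ for large $n$; this follows from Proposition \ref{prp:path_approx}, which lets us approximate $\gamma^*$ by piecewise linear loops with $n+1$ equal-time edges of length $<R/(n+1)$ lying in the correct homotopy class (using $\delta$-fattening and $\reach(Z)$ to stay in $\homt(X)$), so that these approximants lie in $\homt^R_n(X)$ and their openings lie in $U$ eventually.

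For the numerator, the key point is a geometric/variational one: any loop whose free-loop class is $\delta$-far from $\hat\gamma^*$ must either be substantially longer than $L^*$ or, if it is close to minimal length, must still be close to $\hat\gamma^*$ — so $\delta$-farness forces an excess length. More precisely, I would prove a lemma stating that there is $\e(\delta)>0$ such that every $\phi\in\Omega_{\AC}^R(\R^2)$ with $\pi_{\LL}(\text{closure of }\phi)\in\hat\Gamma_\delta$ (i.e.\ the corresponding loop is $\delta$-far from $\hat\gamma^*$ in $\hat\rho_{\LL}$) has $L(\phi)\ge L^*+\e(\delta)$, where moreover $\e(\delta)\ge c'\delta^2$ for small $\delta$. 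This uses Lemma \ref{lem:shortest_loop} (uniqueness of the shortest loop) and a compactness/lower-semicontinuity argument on $\cl\homt(X)$: the length functional restricted to loops in $\cl\homt^R(X)$ at $\hat\rho_{\LL}$-distance $\ge\delta$ from the unique minimizer attains a minimum strictly above $L^*$; the quadratic rate $\delta^2$ comes from the local strict convexity of length near the (piecewise linear, hence ``non-degenerate'') minimizer, exactly as in Corollary \ref{cor:straight_path}. Given this lemma, $\iota(\Gamma_\delta\cap\homt^R_n(X))$ is contained in the set of paths $\phi\in\Omega^R(\R^2)$ with $L(\phi)\ge L^*+c'\delta^2$ and $\phi(0)\in\cl E$, whose closure still consists of such paths (length is lower semicontinuous); applying the Mogulskii upper bound and then Proposition \ref{prp:I_prop}\eqref{ii:3} (with the reference path $\gamma^*$) gives $\limsup_n\frac1n\log\mu_n(\iota(\Gamma_\delta\cap\homt^R_n(X)))\le -I(\gamma^*)-c''\delta^2$.

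Combining, $\limsup_n \frac1n\log\nu_n(\Gamma_\delta)\le \big(-I(\gamma^*)-c''\delta^2\big)-\big(-I(\gamma^*)\big)=-c''\delta^2$, which is the claim with $c=c''$. A couple of routine points need care: one must check that the closure of $\iota(\Gamma_\delta\cap\homt^R_n(X))$ used in the Mogulskii upper bound does not pick up paths of length $<L^*+c'\delta^2$ — handled by lower semicontinuity of $L$ and by noting $\pi_{\LL}^{-1}(\hat\Gamma_\delta)$ is defined via the closed condition $\hat\rho_{\LL}\ge\delta$; and one must confirm the $I$-minimizing parametrization within a free-loop class is the constant-speed one, which is Proposition \ref{prp:I_prop}\eqref{ii:2}, so that comparing $I$-values of whole classes reduces to comparing lengths via \eqref{ii:3}.

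I expect the main obstacle to be the geometric lemma asserting the \emph{quadratic} excess-length lower bound $L(\phi)-L^*\ge c'\delta^2$ for loops $\delta$-far from $\hat\gamma^*$ in the free-loop $C^0$ metric. The linear-case analogue (Corollary \ref{cor:straight_path}) is comparatively transparent because the minimizer is a single segment; here the minimizer is a polygon taut around a subset of the punctures, and one has to (i) handle the free (basepoint-unfixed) nature of $\hat\rho_{\LL}$, (ii) rule out ``short but far'' competitors using uniqueness from Lemma \ref{lem:shortest_loop} plus a compactness argument to get \emph{some} positive excess, and then (iii) upgrade a mere positive lower bound to a quadratic one near the minimizer by a second-order analysis of polygonal length as a function of vertex positions constrained to stay outside the punctures — essentially strict convexity of the taut-string length transverse to the minimizer. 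Getting uniform control of the constant $c'$ over all scales of $\delta$ (not just small $\delta$) will require combining the local quadratic estimate with the global compactness bound.
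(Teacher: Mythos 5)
Your overall architecture --- writing $\nu_n(\Gamma_\delta)$ as the ratio $\mu_n(\iota(\Gamma_\delta\cap\homt^R_n(X)))/\mu_n(\iota(\homt^R_n(X)))$, upper-bounding the numerator by a Mogulskii-type upper bound plus an excess-length estimate converted into an excess-rate estimate via Proposition \ref{prp:I_prop}\eqref{ii:3}, and lower-bounding the denominator by $-I^*$ --- is the same as the paper's, which routes both bounds through Proposition \ref{prp:main}. The numerator half of your argument is essentially sound, though the paper obtains the quadratic excess length more cheaply: if some point of $\gamma$ lies at distance at least $\delta$ from the image of $\hat\gamma^*$, then $L(\gamma)\ge\sqrt{\ell^2+\delta^2}\ge\ell+m\delta^2$ by concavity of the square root (with $\ell=L(\hat\gamma^*)$ and $\delta<2R$), avoiding your second-order strict-convexity analysis of the polygonal minimizer.

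The genuine gap is in your treatment of the denominator. You propose to apply the untethered Mogulskii lower bound to an open neighborhood $U$ of (the opening of) $\gamma^*$ and then argue that $\iota(\homt^R_n(X))$ ``meets'' $U$ for large $n$ via Proposition \ref{prp:path_approx}. But meeting $U$ gives no measure-theoretic information: you need $\liminf_n\frac{1}{n}\log\mu_n(\iota(\homt^R_n(X)))\ge -I^*$, and since $U\not\subset\iota(\homt^R_n(X))$, the quantity $\mu_n(U)$ does not bound $\mu_n(\iota(\homt^R_n(X)))$ from below. The set $\iota(\homt^R_n(X))$ varies with $n$ and is ``thin'' inside any such $U$: it consists only of those piecewise linear paths whose closing edge has length less than $R/(n+1)$ and whose closure lies in the prescribed free homotopy class, and the closing-up condition alone is an event of probability of order $n^{-2}$ conditional on the path tracking $\gamma^*$. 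This is exactly the obstruction the paper flags at the end of Section \ref{sec:prelim} (``a direct application of Mogulskii's theorem is not feasible'') and resolves in Lemma \ref{lem:lower_bound}, whose proof requires an exponential change of measure (Lemma \ref{lem:change_meas}), a martingale maximal inequality (Lemma \ref{lem:max_ineq}), and a uniform local limit theorem (Lemma \ref{lem:max_path}) to show that the closing-up constraint costs only a polynomial factor, hence is negligible on the $\frac{1}{n}\log$ scale. Without an argument of this kind your denominator bound, and hence the theorem, does not follow. The same issue does not affect your numerator bound, because there the containment of the $n$-dependent set in a fixed closed set goes in the favorable direction for an upper bound.
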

The proof of the above theorem relies on Proposition \ref{prp:main}
below, which can be regarded as a variation of the Mogulskii's theorem.
Recall that by Proposition \ref{prp:I_prop} $I(\cdot)$ attains the same
value for any constant speed parametrization of $\hat\gamma^*$. Let us
denote this value by $I^*$.
\begin{prp}
    \label{prp:main}
    For any Borel subset $\Gamma\subset\homt(X)$ we have
    $$
    -(\inf_{\gamma\in\Gamma^{\circ}}{I(\gamma)} - I^*) \leq
    \liminf_{n\to\infty}{\frac{1}{n}\log{\nu_n(\Gamma)}}\leq
    \limsup_{n\to\infty}{\frac{1}{n}\log{\nu_n(\Gamma)}}\leq
    -(\inf_{\gamma\in\cl\Gamma}{I(\gamma)}-I^*),
    $$
    where $\Gamma^{\circ}$ and $\cl\Gamma$ denote the interior
    the closure of $\Gamma$ in $\Omega(\R^2)$, respectively.
\end{prp}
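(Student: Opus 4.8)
The plan is to reduce the statement to the untethered Mogulskii's theorem (Theorem \ref{thm:mogulskii2}) applied with $E_n = \cup_{k=1}^n A_k$, which gives us the rate function $I$, by carefully tracking the normalizing denominator. Recall from the alternative description of $\nu_n$ that for a Borel set $\Gamma \subset \homt(X)$,
$$
\nu_n(\Gamma) = \frac{\mu_n(\iota(\Gamma \cap \homt^R_n(X)))}{\mu_n(\iota(\homt^R_n(X)))},
$$
and that $\mu_n$ is precisely the law of the random path $\tilde\Psi_n(V_n, X_1/n, \ldots, X_n/n)$ with $V_n$ uniform on $E_n$; so $\tilde\mu_n$ in Theorem \ref{thm:mogulskii2} coincides with $\mu_n$. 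Taking logarithms and dividing by $n$,
$$
\frac{1}{n}\log \nu_n(\Gamma) = \frac{1}{n}\log \mu_n(\iota(\Gamma \cap \homt^R_n(X))) - \frac{1}{n}\log \mu_n(\iota(\homt^R_n(X))).
$$
The first step is therefore to show that the denominator behaves like $e^{-nI^*}$ to exponential order, i.e. $\frac{1}{n}\log \mu_n(\iota(\homt^R_n(X))) \to -I^*$. For the upper bound on $-I^*$ we apply the upper bound half of Theorem \ref{thm:mogulskii2} to the closure of $\iota(\homt^R_n(X))$ and use that, by Lemma \ref{lem:shortest_loop} and Proposition \ref{prp:I_prop}(\ref{ii:2})--(\ref{ii:3}), the infimum of $I$ over (the open version of) this set equals $I^*$, the value at the constant-speed parametrization of the opened shortest loop $\hat\gamma^*$. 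The lower bound requires exhibiting, for every $\e > 0$, an open neighborhood of a near-minimizer that sits inside $\iota(\homt^R_n(X))$ for all large $n$ and on which $I < I^* + \e$; here Proposition \ref{prp:path_approx} provides piecewise linear approximants with the prescribed edge count and traversal times, and the genericity/openness of the homotopy-class condition (a free homotopy class is a connected component of $\LL(X)$, hence open) ensures such perturbations remain in $\homt^R_n(X)$.

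The second step handles the numerator. For the upper bound we again invoke the upper half of Theorem \ref{thm:mogulskii2}: writing $\Gamma' = \iota(\Gamma)$, we have $\limsup \frac{1}{n}\log \mu_n(\iota(\Gamma \cap \homt^R_n(X))) \le \limsup \frac{1}{n}\log \mu_n(\cl{\iota(\Gamma)}) \le -\inf_{\cl{\iota(\Gamma)}} I$, and we must argue $\inf_{\cl{\iota(\Gamma)}} I = \inf_{\cl\Gamma} I$. This uses that $\iota$ (opening a loop) and its partial inverse (closing a path by reconnecting the endpoints) are compatible with the $C^0$ topology on the relevant sets, that a path in $\D_I$ has $\|\phi'\| < R$ a.e. by Proposition \ref{prp:I_prop}(\ref{ii:1}) so its closing edge is controlled, and that $I$ is, up to the reparametrization accounting in Proposition \ref{prp:I_prop}(\ref{ii:2}), insensitive to the opening operation. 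Combining with the denominator asymptotics yields the claimed upper bound $-(\inf_{\cl\Gamma} I - I^*)$. For the lower bound we use the lower half of Theorem \ref{thm:mogulskii2} applied to $\iota(\Gamma^\circ)$ together with the fact that $\iota$ maps the interior of $\Gamma$ (relative to $\homt(X)$, which is open in $\LL(X)$) into the interior of $\iota(\Gamma \cap \homt^R_n(X))$ for $n$ large enough, the length/speed bound being eventually satisfied on any fixed path in $\D_I \cap \Gamma^\circ$ since such paths have speed strictly below $R$.

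The main obstacle I anticipate is the interchange between the loop picture ($\homt^R_n(X) \subset \LL(\R^2)$, finitely many vertices, edge traversal times exactly $1/(n+1)$) and the path picture that Mogulskii's theorem speaks about ($\iota(\homt^R_n(X)) \subset \Omega(\R^2)$, $n$ increments, traversal times $1/n$): one must check that opening/closing does not distort lengths or the rate functional in the $n \to \infty$ limit, and that the sets $\iota(\Gamma \cap \homt^R_n(X))$ are "large enough" in the $C^0$ topology to catch the near-minimizers needed for the lower bounds — in particular that the constraints $\|x_i - x_{i-1}\| < R/(n+1)$ and $\Phi_n(x) \in \homt(X)$ cut out sets whose closures and interiors match the closure and interior of $\iota(\Gamma)$ to the precision required. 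Establishing this matching, essentially a statement that the discrete homotopy constraint is "continuous" and that the speed cap $R$ is non-binding near the minimizer, is where most of the real work lies; the large-deviation estimates themselves are then a direct quotation of Theorem \ref{thm:mogulskii2} combined with Proposition \ref{prp:I_prop}.
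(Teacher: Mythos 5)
Your decomposition of $\frac{1}{n}\log\nu_n(\Gamma)$ into numerator and denominator, and your treatment of the upper bounds by enclosing the $n$-dependent sets in a fixed closed set and invoking the upper half of Theorem \ref{thm:mogulskii2}, match the paper's strategy (the paper makes the enclosure precise via the truncated sets $\Gamma_{\e}=\{\gamma|_{[0,1-\delta]}\,:\,\gamma\in\cl\Gamma,\ 0\leq\delta\leq\e\}$ and a continuity argument $I(\gamma_{\e})\to I(\gamma)$, which you would still need to spell out since $\iota$ is only defined on piecewise linear loops, so ``$\iota(\Gamma)$'' for general $\Gamma$ is not meaningful). However, there is a genuine gap in both of your lower bounds. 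You propose to get $\liminf_n \frac{1}{n}\log\mu_n(\iota(\homt^R_n(X)))\geq -I^*$ by ``exhibiting an open neighborhood of a near-minimizer that sits inside $\iota(\homt^R_n(X))$ for all large $n$,'' and similarly to apply the lower half of Theorem \ref{thm:mogulskii2} to $\iota(\Gamma^{\circ})$ using that $\iota$ maps $\Gamma^{\circ}$ ``into the interior of $\iota(\Gamma\cap\homt^R_n(X))$.'' No such open neighborhood exists and that interior is empty: $\iota(\homt^R_n(X))$ consists only of piecewise linear paths with at most $n+1$ vertices, prescribed traversal times, and — crucially — the bridge-type constraint $\|x_0-x_n\|<\frac{R}{n+1}$ forcing the opened path to nearly return to its starting point. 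Every $C^0$-ball contains paths violating all of these conditions, and the constraint tightens with $n$, so no fixed open set is eventually contained in these sets. The LDP lower bound of Theorem \ref{thm:mogulskii2} applies only to a fixed Borel set and therefore gives $-\inf_{\emptyset}I=-\infty$ here.

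This is exactly the obstruction the paper flags when it says a direct application of Mogulskii's theorem is not feasible for ratios $\mu_n(Q_n)/\mu_n(P_n)$ with $n$-dependent sets, and it is why the proof routes both lower bounds through Lemma \ref{lem:lower_bound}, which asserts $-\inf_{\gamma\in\Gamma}I(\gamma)\leq\liminf_n\frac{1}{n}\log\mu_n(\iota(\Gamma\cap\homt^R_n(X)))$ for open $\Gamma\subset\homt(X)$. That lemma is not a corollary of Theorem \ref{thm:mogulskii2}; it is proved from scratch by approximating a near-minimizer with a piecewise linear loop, applying an exponential change of measure edge by edge (Lemma \ref{lem:change_meas}), controlling the sup-deviation of the tilted walk with a Doob--Hoeffding maximal inequality (Lemma \ref{lem:max_ineq}), and — the step your proposal has no substitute for — using a uniform local limit theorem (Lemma \ref{lem:max_path}) to show that the probability of the endpoint landing within $O(1/n)$ of its target is only polynomially small, hence does not affect the exponential rate. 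Without an argument of this kind, the lower bound in Proposition \ref{prp:main}, and hence the denominator asymptotics needed even for your upper bound, cannot be established.
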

The key ingredients in the proof of this proposition are the
untethered Mogulskii's theorem and the following lemma, which is of
independent interest in itself:
\begin{lem}
    \label{lem:lower_bound}
    Let $\Gamma\subset\homt(X)$ be open, and let
    $\Gamma_n=\iota(\Gamma\cap\homt^R_n(X))$. Then
    $$
    -\inf_{\gamma\in\Gamma}{I(\gamma)} \leq
    \liminf_{n\to\infty}{\frac{1}{n}\log{\mu_n(\Gamma_n)}}
    $$
\end{lem}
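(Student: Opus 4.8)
The statement is a large-deviation lower bound, and the plan is to run an exponential-tilting (change of measure) argument adapted to the loop-closure constraint that is built into $\homt^R_n(X)$. Fix $\e>0$ and choose $\gamma_0\in\Gamma$ with $I(\gamma_0)<\inf_\Gamma I+\e$ (if $\inf_\Gamma I=\infty$ there is nothing to prove). I would first replace $\gamma_0$ by a piecewise linear loop $\gamma_1$ with $\|\gamma_1'\|\le R-\delta$ a.e.\ for some $\delta>0$, $I(\gamma_1)\le I(\gamma_0)+\e$, $\gamma_1(0)\in A$, and $\rho(\gamma_1,\gamma_0)$ as small as we like: truncate $\gamma_0'$ to $\{\|\gamma_0'\|\le R-\delta\}$, approximate in $L^1$ by a step function, subtract its (small) mean so that the primitive again closes up, and take the primitive based at $\gamma_0(0)$; convexity and continuity of $\Lambda^*$ on $B_R$ together with Proposition~\ref{prp:I_prop} control the change in $I$, and $\gamma_0(0)\in A$ follows since $I(\gamma_0)<\infty$ forces $\gamma_0\in\homt^R(X)$. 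Finally fix $r_1>0$ small enough that the $\rho$-ball of radius $5r_1$ about $\gamma_0$ lies in $\Gamma$ and consists of loops freely homotopic to $\gamma_0$ in $X$.

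Next, discard long edges and tilt. Every path in $\Gamma_n$ has all $n$ of its increments in $B_{R'_n}$, $R'_n:=\tfrac{n}{n+1}R$; conditioning the walk on $\{X_i\in B_{R'_n}\ \forall i\}$ costs the factor $(n/(n+1))^{2n}\to e^{-2}$ and reduces matters to the walk with the $X_i$ i.i.d.\ uniform on $B_{R'_n}$, of law $\mu'_n$ and log-m.g.f.\ $\Lambda^{(n)}$; since $\mu'_n\to\mu$, one has $(\Lambda^{(n)})^*\to\Lambda^*$ and $(\nabla\Lambda^{(n)})^{-1}\to(\nabla\Lambda)^{-1}$ uniformly on compacta of $B_R$, in particular on $\overline{B_{R-\delta}}$. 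Set $k=k_n:=\lceil C\sqrt n\rceil$ ($C$ fixed later) and define $\mathbb{Q}_n$ by keeping $V_n\sim\upsilon_n$, tilting $X_i$ for $i\le n-k$ to the mean $y(i):=\gamma_1'(i/n)\in\overline{B_{R-\delta}}$ — these tilts exist by part~(\ref{i:4}) of Proposition~\ref{prp:rate_func} — and leaving $X_i$ for $i>n-k$ untilted. On the event that the walk stays in the $r_1$-tube about $\gamma_1$ on $[0,\tfrac{n-k}{n}]$, the partial sums $\sum_{i\le j}(X_i-y(i))$ are $O(nr_1)$, so summation by parts and part~(\ref{i:3}) of Proposition~\ref{prp:rate_func} give, for $r_1$ small (depending only on the number of linear pieces of $\gamma_1$ and on $\sup_{\overline{B_{R-\delta}}}\|(\nabla\Lambda^{(n)})^{-1}\|$),
$$
\log\frac{d\mu'_n}{d\mathbb{Q}_n}\ \ge\ -\,n\!\int_0^1(\Lambda^{(n)})^*(\gamma_1')\,dt\ -\ n\e\ \ge\ -\,n\big(I(\gamma_1)+2\e+o(1)\big).
$$
Hence $\mu_n(\Gamma_n)\ge (n/(n+1))^{2n}\,e^{-n(I(\gamma_1)+2\e+o(1))}\,\mathbb{Q}_n(\mathcal G_n)$ for a good event $\mathcal G_n\subset\Gamma_n$ described next, and it remains to prove $\tfrac1n\log\mathbb{Q}_n(\mathcal G_n)\to0$.

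Let $\mathcal G_n$ be the intersection of $\{V_n\in B_{c_0/n}(\gamma_1(0))\}$, $\{$walk in the $r_1$-tube about $\gamma_1$ on $[0,1]\}$, $\{\|v_{n-k}-\gamma_1(1-\tfrac kn)\|<c_5/\sqrt n\}$, and $\{\|v_n-v_0\|<\tfrac{R}{n+1}\}$, where $v_j$ are the walk's vertices and $v_0=V_n$. On $\mathcal G_n$, closing the walk yields a piecewise linear loop with $n+1$ vertices and all edges (including the closing one) of length $<R/(n+1)$ that is $\rho$-within $O(r_1)$ of $\gamma_0$, hence lies in $\Gamma\cap\homt^R_n(X)$, so $\mathcal G_n\subset\Gamma_n$ (under $\mathbb{Q}_n$ every non-closing edge is automatically legal, as $\mathrm{supp}\,\mu'_n=B_{R'_n}$). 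To bound $\mathbb{Q}_n(\mathcal G_n)$ from below: $\mathbb{Q}_n(V_n\in B_{c_0/n}(\gamma_1(0)))\ge c/n^2$ since $\gamma_1(0)$ lies in the open set $A_{n_0}\subset A_n$; given that, the first $n-k$ (tilted) steps satisfy $\|v_{n-k}-\gamma_1(1-\tfrac kn)\|<c_5/\sqrt n$ while staying in the $r_1$-tube with probability bounded below by a positive constant (a local CLT for the tilted sum, plus an exponential bound excluding excursions of size $r_1$); and, conditionally on this, $\|v_0-v_{n-k}\|\le C'/\sqrt n$ for a fixed $C'$, the last $k$ untilted steps stay in the $r_1$-tube automatically (total span $<CR/\sqrt n$), and $\sum_{i>n-k}X_i$ falls in the radius-$R'_n$ ball about $n(v_0-v_{n-k})$ — a point at distance $\le C'\sqrt n$, deep inside the effective domain once $C>C'/R$ — with probability $\ge e^{-O(\sqrt n)}$, by a Cramér-type (local) lower bound for a sum of $k$ i.i.d.\ terms. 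Multiplying these three factors, $\mathbb{Q}_n(\mathcal G_n)\ge e^{-O(\sqrt n)}$, so $\tfrac1n\log\mathbb{Q}_n(\mathcal G_n)\to0$; feeding this into the previous display and letting $\e\downarrow0$ gives $\liminf_n\tfrac1n\log\mu_n(\Gamma_n)\ge-\inf_\Gamma I$.

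The main obstacle is precisely the loop-closure constraint $\|v_n-v_0\|<R/(n+1)$, which lives at the edge scale $1/n$ and cannot be satisfied merely by keeping the walk inside a fixed $C^0$-tube about a loop. The device resolving it — freeing the last $\lceil C\sqrt n\rceil$ edges, so that they cost only $o(n)$ in the rate yet can absorb the unavoidable $O(\sqrt n)$ discrepancy, while the remaining $n-k$ edges carry the full rate $I(\gamma_1)$ — together with the local-CLT/moderate-deviation estimates for the pinned tilted walk is the crux of the argument, and it is what forces the three-scale structure $c_0/n$ versus $c_5/\sqrt n$ versus $r_1$ in $\mathcal G_n$. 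A secondary technical point is the reduction in the first step to a piecewise linear representative with speed bounded strictly below $R$ without inflating $I$, which is what makes the tilts well defined and lets the uniform convergence $(\Lambda^{(n)})^*\to\Lambda^*$ be used on the compact set $\overline{B_{R-\delta}}$.
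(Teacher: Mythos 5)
Your proposal is correct, and its overall skeleton coincides with the paper's: reduce to a piecewise linear representative without increasing $I$ (the paper uses Proposition~\ref{prp:path_approx} plus Jensen/convexity of $\Lambda^*$, just as you do), tilt the increments edge-by-edge to the slopes of that representative, control the $C^0$-tube event by a Doob/Hoeffding maximal inequality, and read off the exponential cost from the duality $\Lambda^*(y)=\langle y,\eta\rangle-\Lambda(\eta)$. Where you genuinely diverge is at the crux you correctly identify, the loop-closure constraint at scale $1/n$. The paper (Lemma~\ref{lem:max_path}) resolves it by \emph{hard pinning at every vertex}: each edge-segment of the tilted walk is required to land within $r_i=R/(2^{\ell+2-i}n)$ of the corresponding vertex, the geometric growth of the radii making each successive pinning feasible, and the uniform local CLT at the finest scale shows each such pinning costs only a polynomial factor $D/(C^2 m)$; closure is then automatic since both ends of the opened path sit within $O(1/n)$ of $v_0$. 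You instead pin only at the CLT scale $c_5/\sqrt n$ (a constant-probability event, obtainable even by Chebyshev) and sacrifice a block of $k=\lceil C\sqrt n\rceil$ untilted steps to absorb the residual $O(\sqrt n)$ discrepancy at cost $e^{-O(\sqrt n)}=e^{-o(n)}$. Your device trades the paper's repeated fine-scale local limit theorem for a single crude Cram\'er-type bound on a short block, and you are also more careful than the paper about the edge-length constraint $\|X_i\|<\tfrac{n}{n+1}R$ (which the paper's event $P$ silently omits) and about the closing radius $R/(n+1)$ versus $R/n$.

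One point needs tightening, though it is not a gap. You introduce $C'$ as a ``fixed'' constant with $\|v_0-v_{n-k}\|\le C'/\sqrt n$ and then ask for $C>C'/R$; but $C'$ itself grows linearly in $C$, since $\|\gamma_1(1)-\gamma_1(1-\tfrac kn)\|$ can be as large as $(R-\delta)k/n$, giving $C'\approx(R-\delta)C+c_5$. The condition $C>C'/R$ therefore unwinds to $C\delta>c_5$, which is satisfiable precisely because you arranged $\|\gamma_1'\|\le R-\delta$ at the outset; without that margin the free block could not reach the closure target at all, since its support has radius only $\approx CR\sqrt n$. So the speed margin is doing more work than you credit it with in your closing paragraph (it is not merely about well-definedness of the tilts and uniform convergence of $(\Lambda^{(n)})^*$), and the constants should be chosen in the order $\delta$, then $c_5$, then $C\ge 2c_5/\delta$. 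With that ordering made explicit, the argument closes.
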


As mentioned in the Introduction, the proofs of these results are
postponed till Section \ref{sec:proofs}.

\section{Sampling in $G_n$}
\label{sec:sampling}
Any practical application of the results from the previous section requires the
ability to sample from $\nu_n$. In this section we show that a standard Markov
Chain Monte Carlo (MCMC) techique can do the job. A comprehansive description
of Markov chains and MCMC methods can be found in \cite{robert2005, meyn2009}
and references therein. Here, we shall limit ourselves to describing and
justifying a particular sampling procedure, providing definitions of only some
concepts. 

\subsection{The sampling algorithm}
As any MCMC method, the sampling algorithm that we propose is based on
constructing an ergodic Markov chain on $G_n$ whose limiting distribution is
$\nu_n$.  For convenience, we shall now fix $n$ and let $G=G_n$, $\nu=\nu_n$,
$\e=\frac{R}{n}$. Also, we assume that $n$ is large enough so that
$G_{n}\neq\emptyset$ and $R/n<\reach(Z)$. The algorithm starts with an arbitrary initial state
$\vs_0\in G$. Given that the chain is in state $\vs_i\in G$, $i\geq 0$,
the next state, $\vs_{i+1}$ is generated as follows. Suppose that
$\vs_i=(v_0,\ldots,v_n)\in\R^{2(n+1)}$ (in other words, $v_0,\ldots,v_n$ are
the vertices of the corresponding loop), and let $v_{-1}=v_n, v_{n+1}=v_0$.
Select $k$ uniformly at random from $\{0,\ldots,n\}$. Let $D$ be the
intersections of two open balls of radius $\e$ centered at $v_{k-1}$ and
$v_{k+1}$. The idea is to choose the next state by moving $v_k$ to a randomly
chosen point in $D$, but we have to make sure that we do not change the free
homotopy class of the corresponding loop. Notice that $D$ may contain at most
one puncture. If $Z\cap D=\emptyset$ we let $E=D$. If some $z_{j}\in D$ we
let $H_1$ be the open half space supported by the line through $v_{k-1}$ and
$z_i$ and not containing $v_{k+1}$, $H_2$ be the open half space supported by
the line through $v_{k+1}$ and $z_i$ and not containing $v_{k-1}$, and
$H=H_1\cap H_2$. Then if $v_{k}\in H$ we let $E=D\cap H$, otherwise, $E=D\setminus
\cl H$ (see Figure \ref{fig:next_state}). Choose $\bar v_{k}$ uniformly at
random from $E$ and set the next state $\vs_{i+1}=(v_0,\ldots,v_{k-1},\bar
v_{k},v_{k+1},\ldots,v_n)$.

\begin{figure}[htb!]
    \centering
    \includegraphics[width=0.8\textwidth]{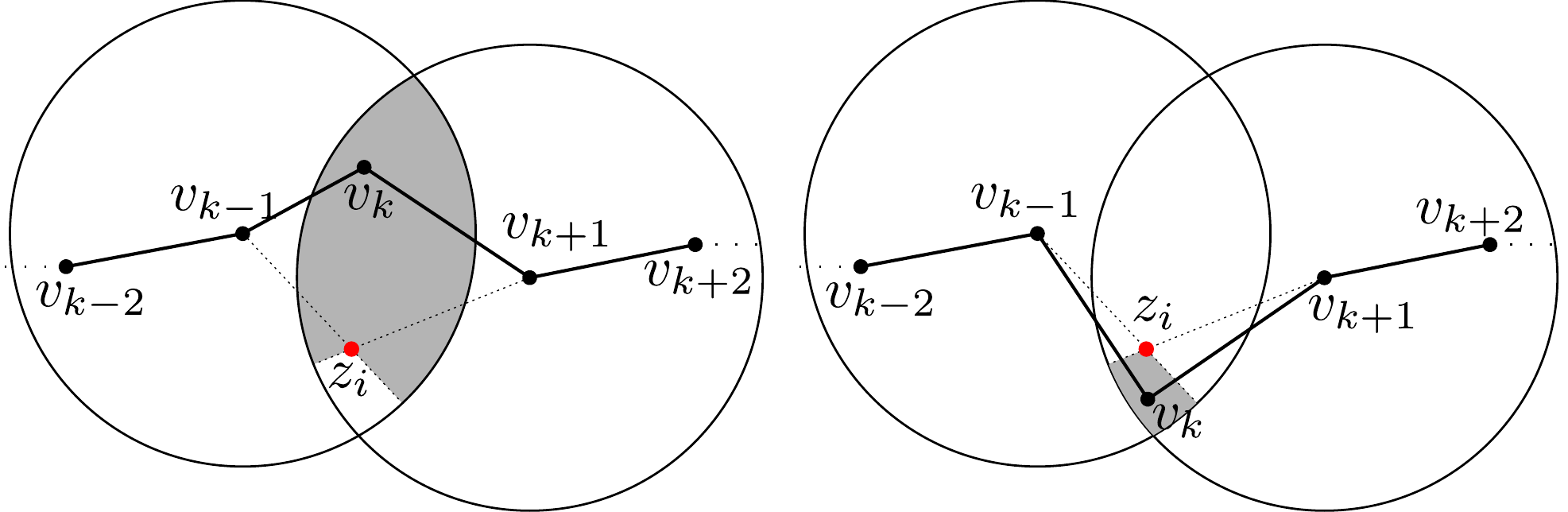}
    \caption{
        \label{fig:next_state}
        The next vertex position during the MCMC procedure is selected
        uniformly from the shaded region. Red points indicate
        punctures; left and right show two different relative positions
        of the vertex being moved and a puncture.
    }
\end{figure}

The above algorithm can be classified as a Metropolis-within-Gibbs algorithm
(see e.g. \cite{roberts2006}), and it follows from standard results that
sequence $\{\vs_i\}$ is a Markov chain whose stationary
distribution is $\nu$. Of course, we also need to show that the chain converges
to $\nu$.  To make this statement more precise, let $P(\vb,\cdot)$,
$\vb\in G$, be the corresponding transition probability measure, i.e.
$P(\vb, A)=\P(\vs_{i+1}\in A|\vs_i=\vb)$, where $A$ is a Borel
subset of $G$ (see \cite{roberts2006} for details).  Denote by
$P^m(\vb,\cdot)$ the probability law of the $m$-th element of the chain when
starting at $\vb$, i.e.  $P^m(\vb, A)=\P(\vs_{m}\in
A|\vs_0=\vb)$. The total variation norm of a signed measure $\mu$ is
defined by $\|\mu\|=\sup_{A\in\mathcal{M}}{|\mu(A)|}$, where $\mathcal{M}$
denotes the collection of $\mu$-measurable sets. We would like to show that
$$
\lim_{m\to\infty}\|P^m(\vb,\cdot)-\nu\| = 0,\quad\forall\vb\in G,
$$
which implies that, regardless of the initial state, our algorithm generates
samples from an almost uniform distribution on $G$ after a large enough number
of steps.

It is well known (see e.g. \cite{robert2005, meyn2009}) that the above
convergence result holds if our Markov chain is $\nu$-irreducible, aperiodic, and
Harris recurrent. $\nu$-irreducibility means that for any Borel set $A\subset
G$ such that $\nu(A)>0$ there exists $m\in\N$ such that $P^m(\vb,A)>0$ for
all $\vb\in G$. Aperiodicity means that if $S_1,\ldots,S_k\subset G$ are
disjoint Borel sets such that $\nu(S_j)>0$ and $\P(\vb, S_{j+1})=1\;\forall
\vb\in S_{j}$, where $j=1,\ldots,k$, $S_{k+1}=S_1$, then $k=1$. Finally,
Harris recurrence means that for any Borel set $A\subset G$ such that
$\nu(A)>0$ we have $\P(\vs_m\in A\,i.o.|\vs_0=\vb)=1\;\forall\vb\in
G$, where i.o. stands for ``infinitely often''.

\begin{prp}\label{prp:sampling_main}
    \begin{enumerate}
        \item \label{i:samp_main1} Suppose that $G$ is path connected.
            Then the Markov chain $\{\vs_i\}$ is $\nu$-irreducible,
            aperiodic and Harris recurrent.
        \item \label{i:samp_main2} $G_n$ is path connected for large
            enough $n$.
    \end{enumerate}
\end{prp}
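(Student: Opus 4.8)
The plan is to verify each of the three conditions—$\nu$-irreducibility, aperiodicity, and Harris recurrence—for the Metropolis-within-Gibbs chain, and then separately establish path-connectedness of $G_n$ for large $n$, which is the hypothesis needed for part \eqref{i:samp_main1}.

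For part \eqref{i:samp_main1}, the first step is to understand the one-step transition kernel. Given a state $\vb\in G$, picking coordinate $k$ and then a new vertex position uniformly in the (nonempty, open) set $E$ yields a transition density that is bounded below by a positive constant on a neighborhood of $\vb$ inside $G$: indeed $E$ always has area bounded below (it is one of the two pieces into which at most one line-pair cuts a lens of fixed shape), so $P(\vb,\cdot)$ has an absolutely continuous component with strictly positive density on a set of positive Lebesgue measure near $\vb$. Hence any $\vb$ has a neighborhood $U_\vb\subset G$ with $P(\vb,A)\geq c_\vb\,\mathrm{Leb}(A\cap U_\vb)$. Since $G$ is assumed path connected and open (it is defined by strict inequalities together with the open condition $\Phi_n(x)\in\homt(X)$, which is open because homotopy class is locally constant), any two points of $G$ are joined by a path, which can be covered by finitely many such neighborhoods; chaining the one-step lower bounds along this cover shows that for every $\vb$ and every $A$ with $\nu(A)>0$ there is $m$ with $P^m(\vb,A)>0$. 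This gives $\nu$-irreducibility. Aperiodicity is immediate from the same local lower bound: since $P(\vb,\cdot)$ puts positive mass on every neighborhood of $\vb$, no nontrivial cyclic decomposition $S_1,\dots,S_k$ with $P(\vb,S_{j+1})=1$ can exist for $k\geq 2$ (a point on the boundary between two cells would have to map with probability one into the ``next'' cell, contradicting that it stays in a full-measure neighborhood of itself). For Harris recurrence, the standard route is to combine $\nu$-irreducibility with the existence of a \emph{small set}: the local minorization $P(\vb,\cdot)\geq c_\vb\,\mathrm{Leb}(\cdot\cap U_\vb)$ upgrades, after one more step and using irreducibility, to a genuine small set $C$ with $\nu(C)>0$; since the state space $G$ is (relatively) compact—$A$ and all the increments are bounded, so $\cl{G_n}$ is compact—the chain is automatically positive Harris recurrent once it is $\nu$-irreducible and possesses a small set, because a $\nu$-irreducible chain on a compact space with an accessible small set admits no transient or null-recurrent behavior (alternatively, invoke that $\nu$ is a finite invariant measure, which for a $\psi$-irreducible chain forces positive Harris recurrence after at most removing a $\nu$-null set, and then note that by the minorization the null set is in fact empty).

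For part \eqref{i:samp_main2}, the goal is to show $G_n$ is path connected once $n$ is large. The strategy is to reduce to connectedness of $\homt^R_n(X)$ inside $\homt(X)$, i.e. to show that any two piecewise-linear loops with $n+1$ vertices, edge lengths $<R/(n+1)$, in the class $\homt(X)$, can be joined by a path within $G_n$. I would argue in two stages: first, any such loop can be connected within $G_n$ to a \emph{canonical} fine polygonal approximation of the (essentially unique, piecewise-linear) shortest loop $\hat\gamma^*$ of Lemma \ref{lem:shortest_loop}; second, since the target is the same for every starting loop, this yields path connectedness. The first stage uses that any two freely homotopic loops in $X$ are joined by a homotopy $H$; one discretizes $H$ in both the loop parameter and the homotopy parameter, using Proposition \ref{prp:path_approx} to replace each intermediate loop by a piecewise-linear one with exactly $n+1$ vertices and short edges, and checks that for $n$ large enough (so that $R/(n+1)<\reach(Z)$ and the mesh of the discretization is fine compared to the distances between the loops and the punctures) consecutive loops in this discrete family differ in only a controlled way and the straight-line interpolation between their vertex-tuples stays inside $G_n$—in particular never crosses a puncture and never violates the edge-length bounds. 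This requires a uniform modulus-of-continuity / compactness input: the homotopy $H$ has compact image at positive distance from $Z$ after shrinking to $X^\delta$, so one gets the needed uniformity.

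The main obstacle is the path-connectedness claim, part \eqref{i:samp_main2}, and within it the delicate point is controlling the edge-length constraint $\|x_i-x_{i-1}\|<R/(n+1)$ simultaneously with the homotopy constraint along the entire interpolating path in $\R^{2(n+1)}$: a naive vertex-by-vertex linear interpolation between two admissible loops need not keep all edges short at intermediate times, nor need it avoid sweeping an edge across a puncture. Handling this likely requires interpolating not linearly but through an intermediate ``refinement'' that first subdivides and reparametrizes each loop to a common fine mesh along $\hat\gamma^*$-like skeleton, so that the interpolation is between loops that are already $C^0$-close and combinatorially aligned; then the edge-length and puncture-avoidance bounds follow from closeness. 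The large deviation parts (the $-c\delta^2$ exponents) are not needed here; the only external inputs are Proposition \ref{prp:path_approx}, Lemma \ref{lem:shortest_loop}, and standard Markov chain theory for $\psi$-irreducible chains on compact spaces.
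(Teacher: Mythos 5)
There are two genuine problems. In part \eqref{i:samp_main1}, your minorization $P(\vb,A)\geq c_{\vb}\,\mathrm{Leb}(A\cap U_{\vb})$ for the \emph{one-step} kernel is false: a single step of this chain changes only one of the $n+1$ vertices, so $P(\vb,\cdot)$ is supported on a finite union of two-dimensional slices through $\vb$ and is singular with respect to $2(n+1)$-dimensional Lebesgue measure. Every neighborhood of $\vb$ therefore has one-step mass coming only from these slices, which breaks your stated route to irreducibility, your aperiodicity argument (``positive mass on every neighborhood of $\vb$''), and the small-set construction for Harris recurrence. The fix is to compose at least $n+1$ coordinate moves (as the paper does via Chapman--Kolmogorov on product neighborhoods $B_{\delta}(v_0)\times\cdots\times B_{\delta}(v_n)$, and for aperiodicity via a Fubini-type slice argument); for Harris recurrence the paper instead invokes a criterion specific to componentwise samplers (it suffices that every coordinate is eventually updated almost surely, which holds since a fixed vertex is skipped for $k$ steps with probability $(n/(n+1))^k\to 0$). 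Your alternative route through positive recurrence plus ``the null set is empty'' would also need repair, since the absolute continuity it leans on is exactly the false one-step claim.

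The more serious gap is in part \eqref{i:samp_main2}. You correctly identify the obstacle -- keeping the edge-length bound $\|x_i-x_{i-1}\|<R/(n+1)$ and puncture avoidance along the interpolation -- but the proposed remedy (discretize an arbitrary free homotopy $H$ between the two loops in both parameters, using compactness of its image) cannot work: membership in $G_n$ forces total length $<R$, and an arbitrary free homotopy between two loops of length $<R$ in $\homt(X)$ will in general pass through loops of length far exceeding $R$; no mesh refinement or modulus-of-continuity argument controls this. The entire content of the paper's proof is producing a homotopy that respects the length budget throughout: it combines a combinatorial normal form for loops (the word $\w(\gamma)$ over a triangulation of the punctures, with cyclic reduction realized by homotopies inside $\homt^R_n(X)$), \emph{length non-increasing} homotopies supplied by the NPC structure of $X^{\delta}$, explicit straightening lemmas for piecewise linear arcs, and a quantitative choice of $\delta^*$, $N^*$, $n^*$ guaranteeing slack $R-L(\gamma)\geq\frac{1}{3}(R-L(\hat\gamma^*))$ at every stage before redistributing vertices along the canonical discretization of $\hat\gamma^{\delta^*}$. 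Your proposal names the destination (a canonical fine approximation of the shortest loop) but supplies none of the machinery that makes the journey stay inside $G_n$, so part \eqref{i:samp_main2} remains unproved as written.
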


The proof of the above proposition is provided in a separate subsection
of Section \ref{sec:proofs}.

\subsection{Numerical simulations}
To illustrate the behavior of our algorithm we have performed some
numerical simulations. For simplicity, the actual implementation of the
algorithm slightly deviates from the description given above. In
particular, the vertex to move at each step is chosen as follows. We
generate a random permutation of indices, $\{i_0,\ldots,i_n\}$, and then
move vertices according to their order in the permutation until all the
vertices have been moved. After that a new random permutation is
generated and the process repeats. In addition, the new position of the
vertex being moved is generated by subsampling the allowable region.  It
is not difficult to show (using essentially the same argument) that the
resulting Markov chain is still $\nu_n$-irreducible, aperiodic and
Harris recurrent, and hence converges (in the total variation norm) to
the uniform distribution on $G_n$.

Our simulations were done for $n=599$ (i.e. loops have $600$
vertices). We performed $2\cdot 10^6$ iterations (where by an iteration
we mean a single pass over all vertices in a random permutation), saving
a loop after each $100$ iterations. Out of saved loops we selected
$50$ last ones. As a proxy for the density of the loop distribution, we
computed the standard kernel density estimation for their vertex
positions. Also, we computed a ``mean'' free loop. This computation was
done by cyclically permuting vertices to minimize the distance between
the corresponding elements of $\R^{2(n+1)}$ and then computing the mean
position for each vertex. It is important to note that such a
computation does not preserve the homotopy class, but it does
provide useful geometric information.

Figure \ref{fig:sim_sq} shows the results of the above computations for
a plane with four punctures, $Z=\{1.35,-1.35\}\times\{1.35,-1.35\}$, and the
free homotopy class of a circle containing all the punctures. We chose
the upper bound on the loop length $R=20$. The shortest free loop,
$\gamma^*$, is in this case the square with vertices in $Z$. It is
evident from the figure that the uniform distribution in
$\homt^R_n(X)$ is nicely concentrated around $\gamma^*$, and the mean
free loop of only $50$ samples has a fairly regular shape close to
$\gamma^*$. Of course, each individual sample has a much more irregular
shape. 

Similar results can be seen in Figure \ref{fig:sim_bf}, where the
computations were done for the plane with punctures $z_1=(-1.3, 0.6)$,
$z_2=(1.3, 0.6)$, $z_3=(1.3, -0.6)$, $z_4=(-1.3, -0.6)$, and the
homotopy class of a lemniscate, as shown in the plot
\ref{fig:sim_bf}(a).  The upper bound on the loop length is again
$R=20$. The shortest free loop, $\gamma^*$, is in this case a ``bow tie''
quadrilateral $z_1 z_3 z_2 z_4$. 

\begin{figure}[!htb]
\centering
\includegraphics[width=0.246\textwidth]{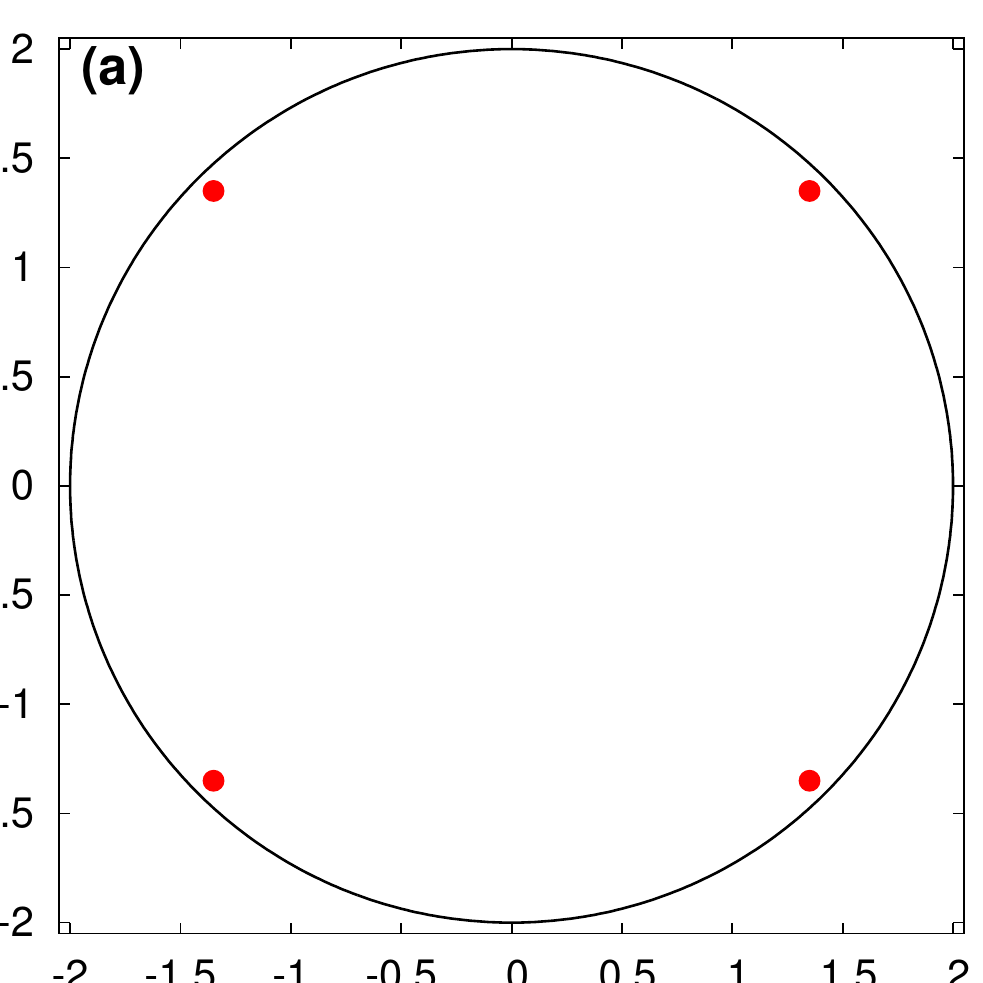}
\includegraphics[width=0.246\textwidth]{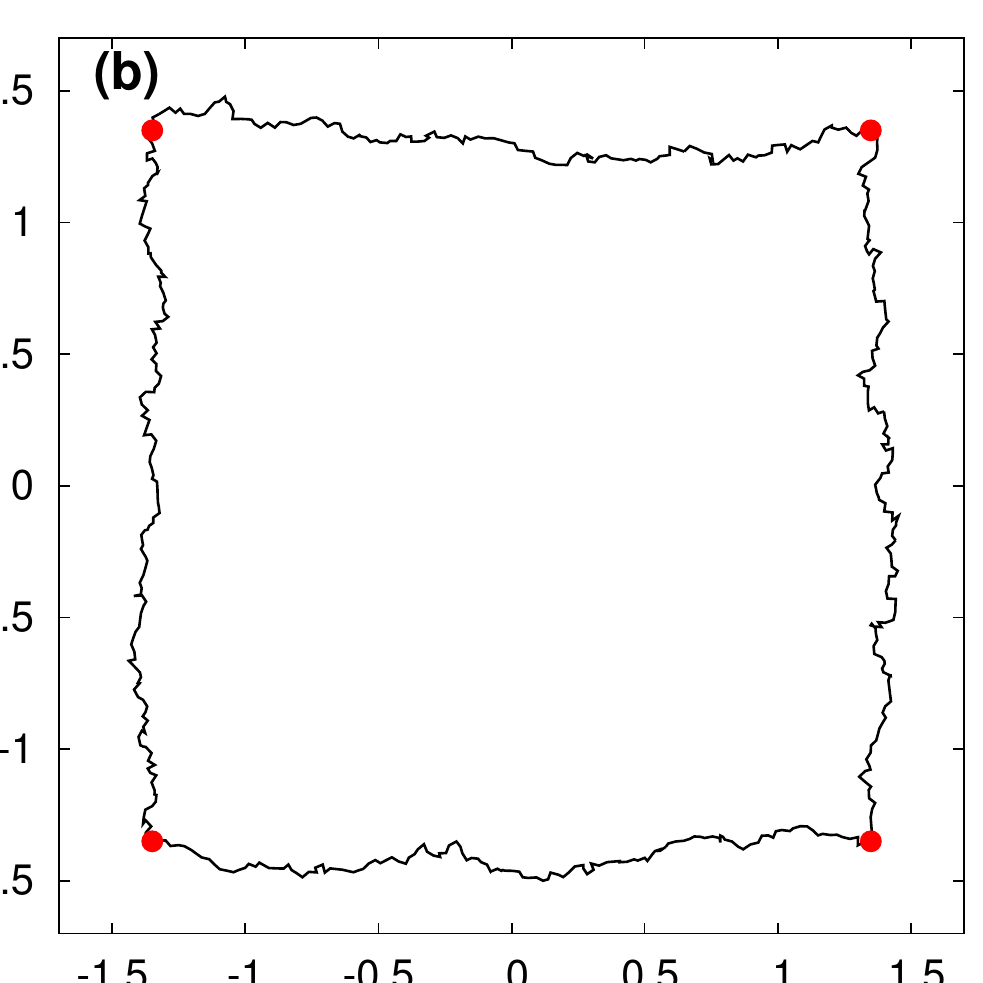}
\includegraphics[width=0.246\textwidth]{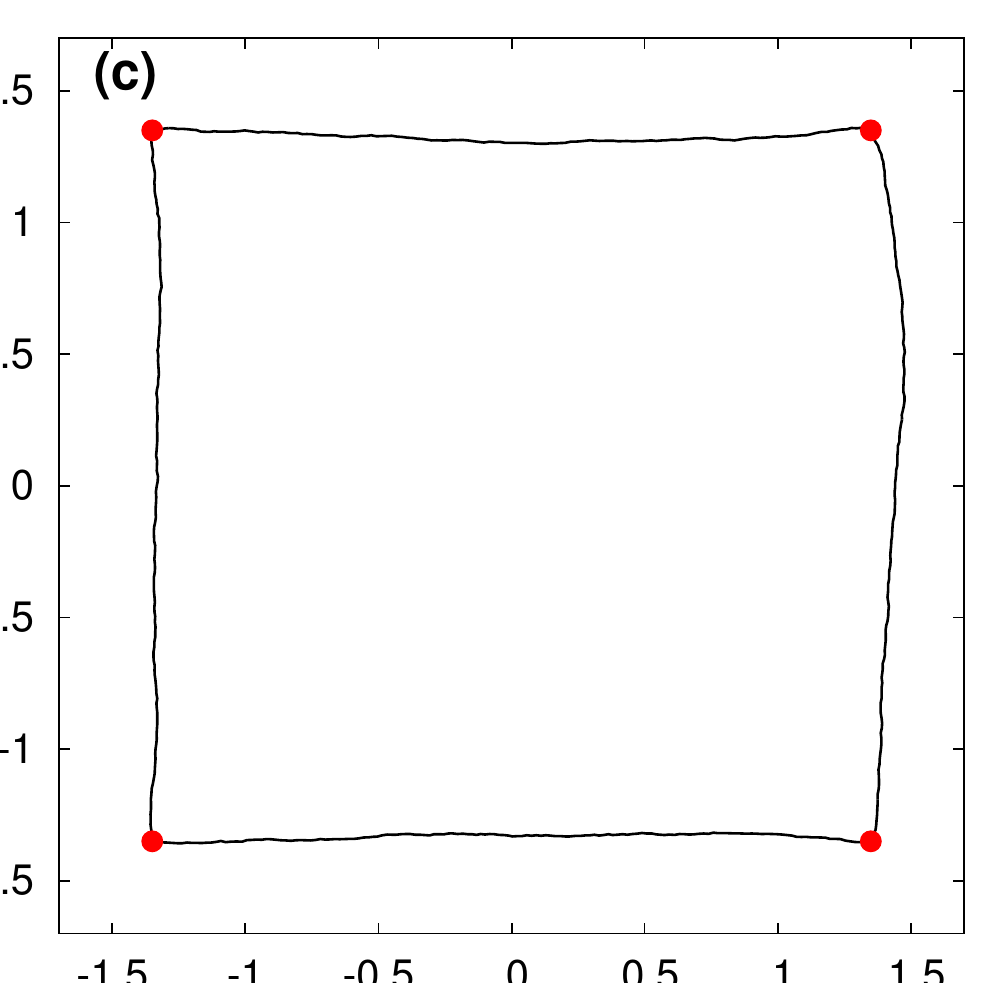}
\includegraphics[width=0.246\textwidth]{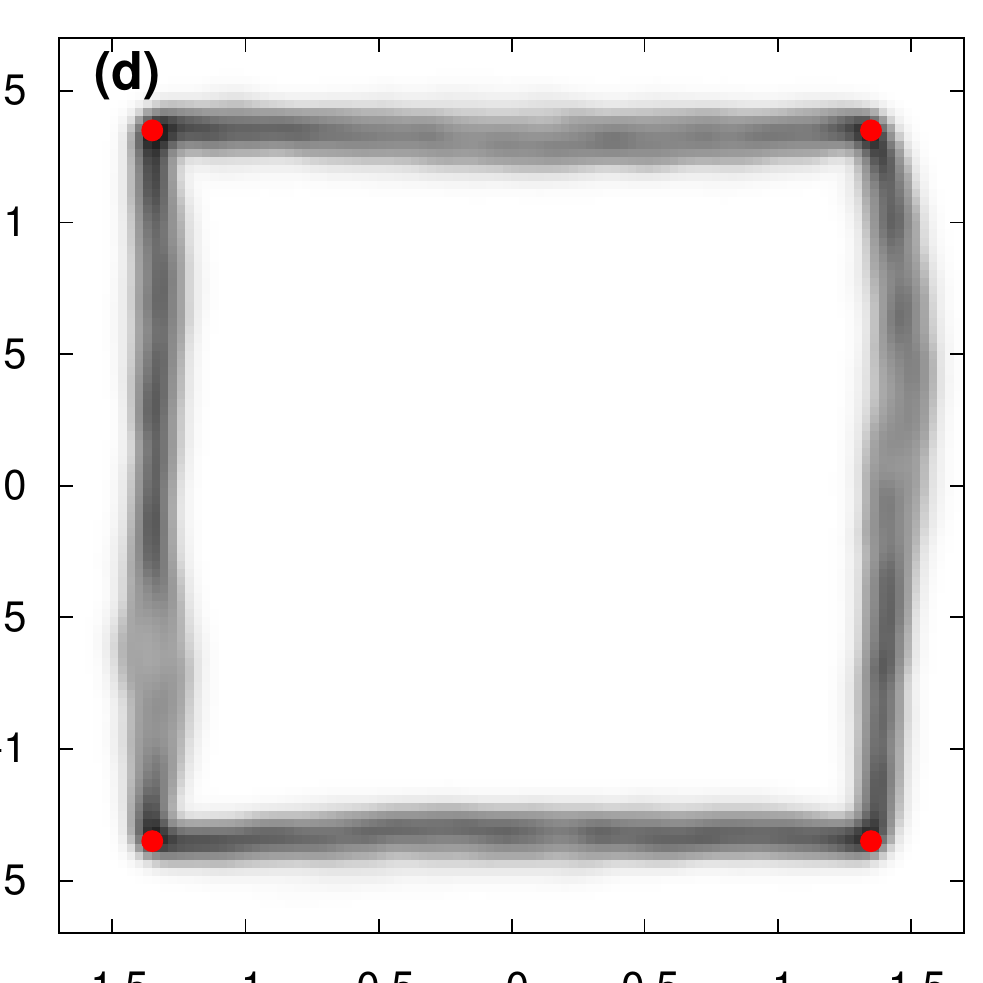}
\caption{\label{fig:sim_sq}{\small
        Example of MCMC simulation: (a) initial loop; (b) loop after
        $2\cdot10^6$ iterations; (c) mean free loop computed using $50$
        representatives; (d) kernel density estimation of vertex
        positions of $50$ loops. Red points indicate punctures.
}}
\end{figure}

\begin{figure}[!htb]
\centering
\includegraphics[width=0.49\textwidth]{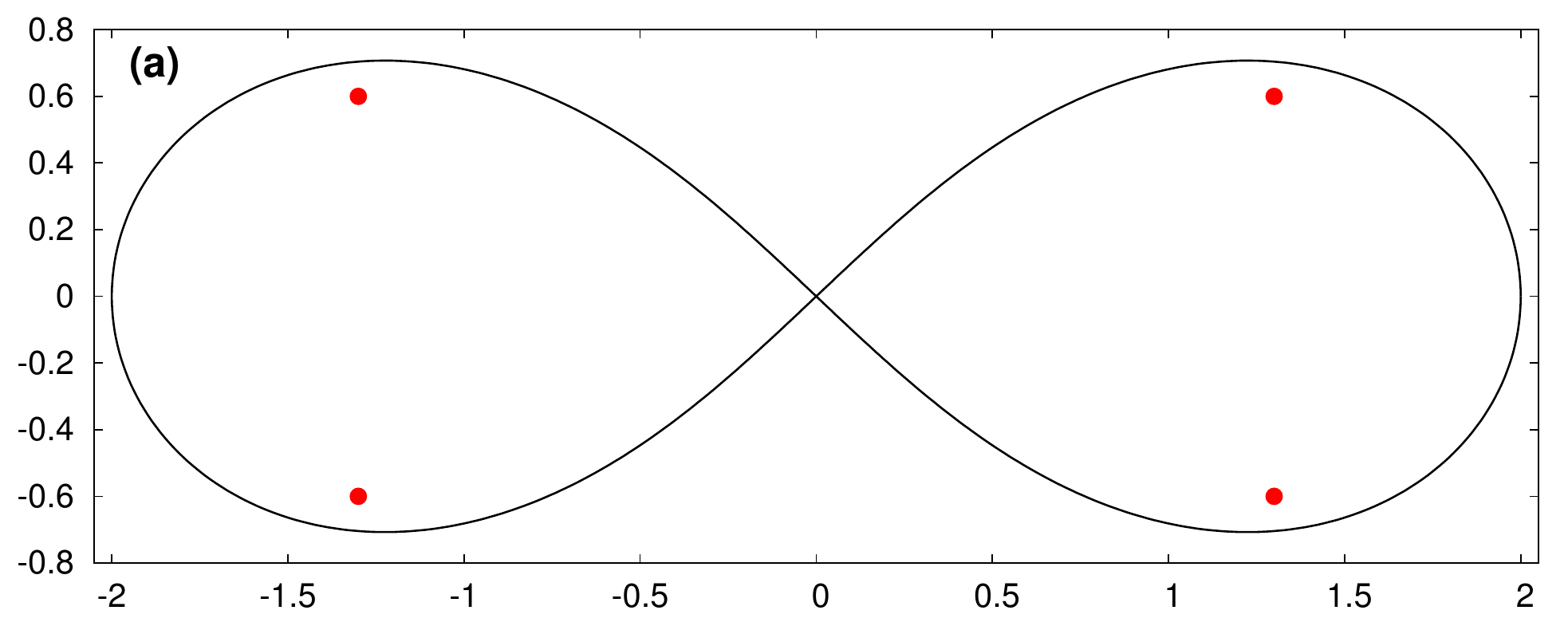}
\includegraphics[width=0.49\textwidth]{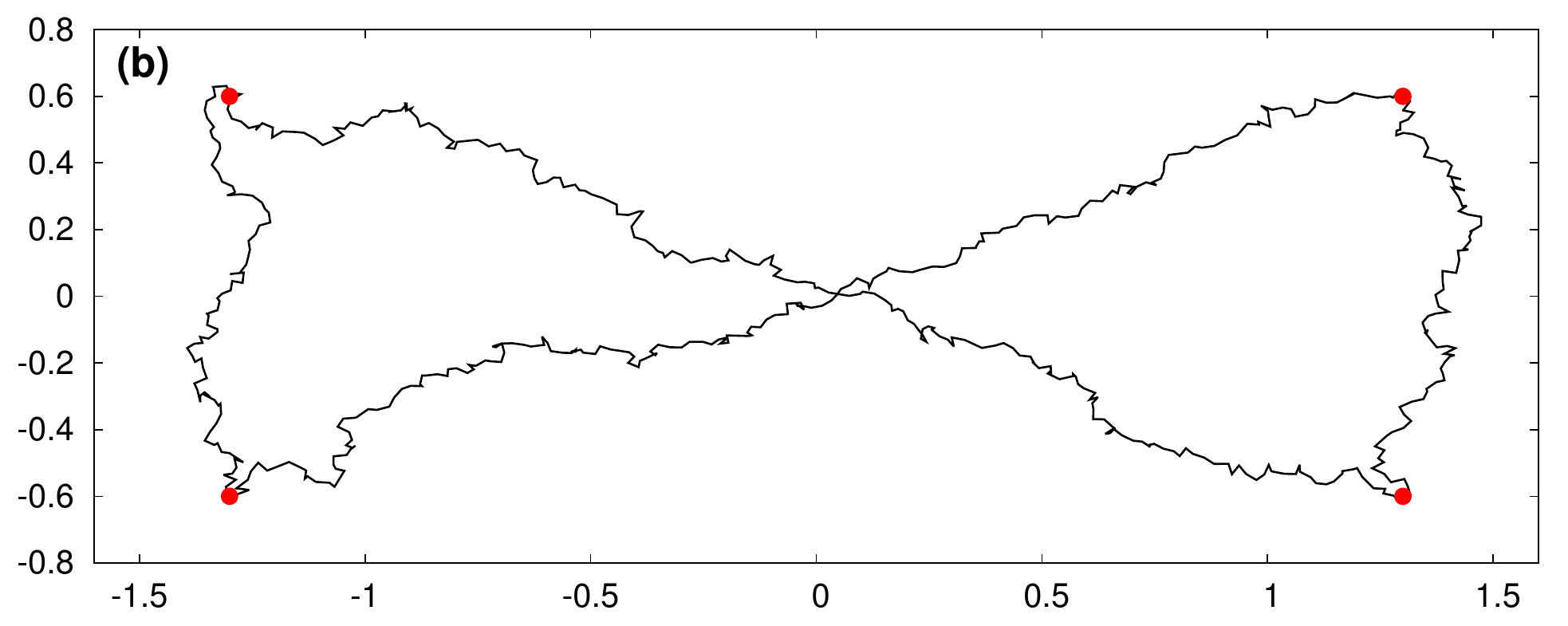}
\includegraphics[width=0.49\textwidth]{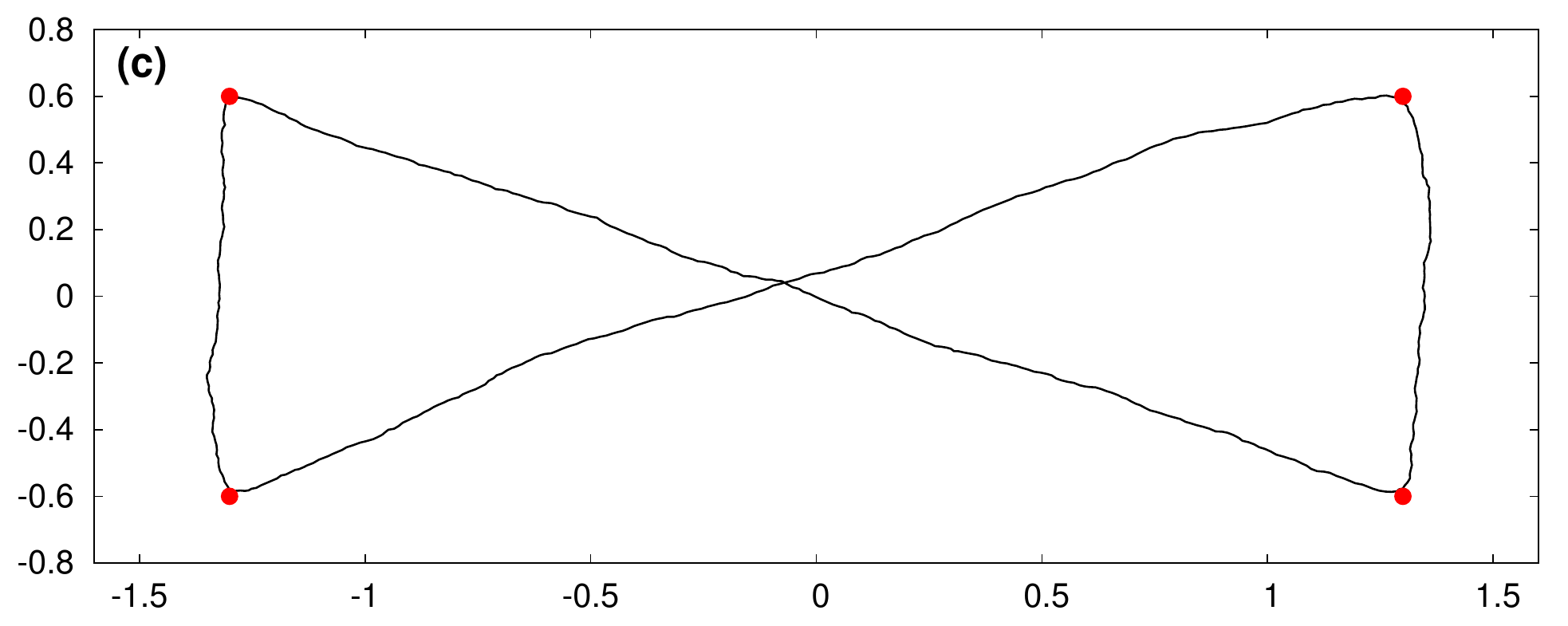}
\includegraphics[width=0.49\textwidth]{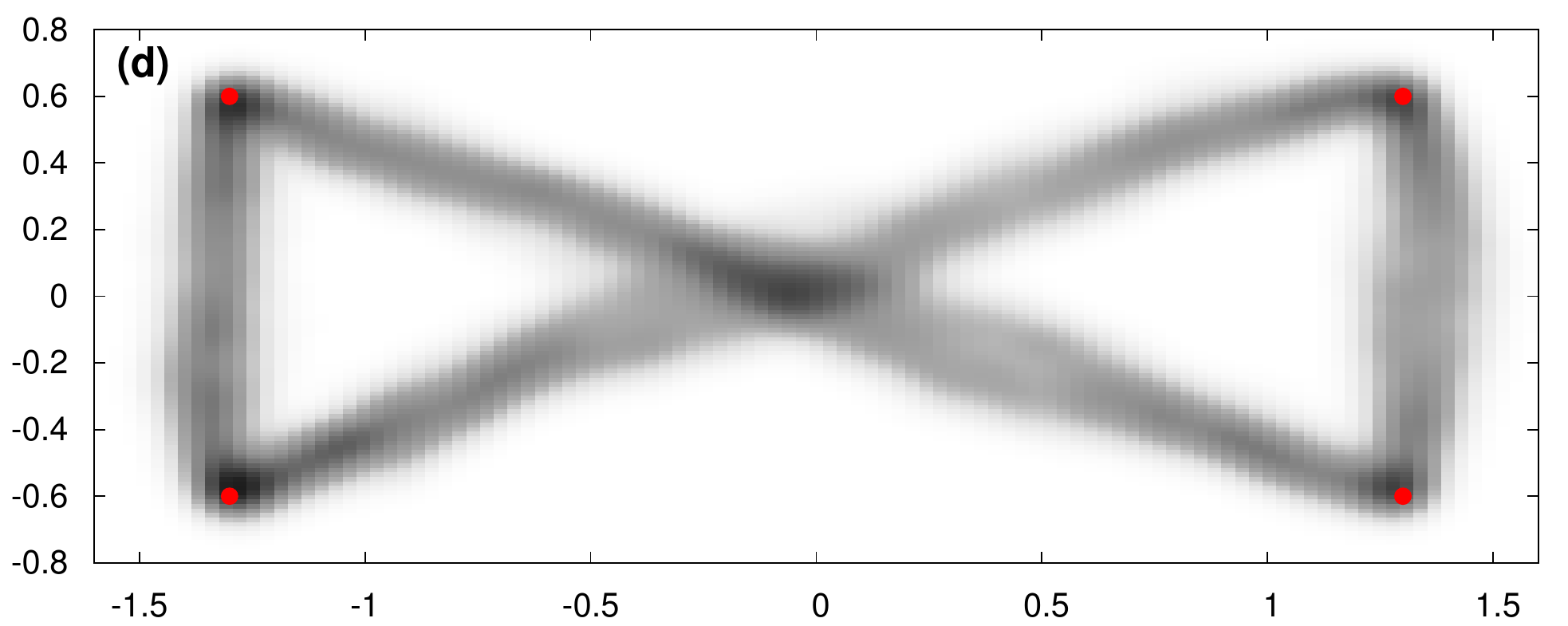}
\caption{\label{fig:sim_bf}{\small
        Another example of MCMC simulation: (a) initial loop; (b) loop after
        $2\cdot10^6$ iterations; (c) mean free loop computed using $50$
        representatives; (d) kernel density estimation of vertex
        positions of $50$ loops. Red points indicate punctures.
}}
\end{figure}
\section{Proofs}
\label{sec:proofs}
We now proceed to prove the results from the previous sections,
starting with preliminary results.

\subsection{Properties of path and loop spaces}

\begin{proof}{(Of Proposition \ref{prp:path_approx}.)}

    Since $\gamma$ is continuous on $[0,1]$ it is uniformly continuous.
    Hence, $\exists$ $\delta>0$ such that
    $\|\gamma(t)-\gamma(s)\|<\frac{\e}{2}$
    whenever $|t-s|<\delta$. Take $m\in\N$ such that
    $\frac{1}{m}<\delta$ and let $t_i=\frac{i}{m}$. Define $\gamma_{PL}$
    to be the piecewise linear
    path with vertices $\gamma\left(t_i\right)$, $i=0,\ldots,m$, and edge
    traversal time $\frac{1}{m}$, so that
    $\gamma_{PL}\left(t_i\right)=\gamma\left(t_i\right)$.
    It is clear that $\gamma_{PL}$ is a loop if $\gamma$ is a loop.
    Also, for $t\in \left[t_{i-1}, t_{i}\right]$, $i=1,\ldots,m$, we have
    $$
    \|\gamma(t)-\gamma_{PL}(t)\|\leq
    \|\gamma(t)-\gamma(t_i)\|+
    \|\gamma_{PL}(t)-\gamma_{PL}(t_i)\|<\e
    $$
\end{proof}

\begin{proof} (Of Proposition \ref{prp:rate_func}.)

    Parts \eqref{i:1}-\eqref{i:3} are standard facts from the large
    deviation theory. Since $\mu$ is invariant under rotations around
    the origin, the same is true for $\Lambda$ and for
    $\Lambda^*$. Hence, $\Lambda(\eta)=\tilde\Lambda(\|\eta\|)$, where
    $\tilde\Lambda$ is a strictly convex, differentiable function on
    $[0,\infty)$. Also, $\Lambda^*(x)=\tilde\Lambda^*(\|x\|)$, where
    $\tilde\Lambda^*$ is a good strictly convex rate function on
    $\D_{\tilde\Lambda^*}$.
    
    To show that $\D_{\Lambda^*}=B_R$ notice that
    $$
    \Lambda(\eta)=\log{\int_{\R^2}{e^{<\eta, x>}\mu(dx)}}=
    R\|\eta\|+\log{\int_{\R^2}{e^{-\|\eta\|(R-x_1)}\mu(dx)}}
    $$
    Dominated convergence theorem yields
    $$
    \lim_{\|\eta\|\to\infty}{\int_{\R^2}{e^{-\|\eta\|(R-x_1)}\mu(dx)}}=
    \int_{\R^2}{\lim_{\|\eta\|\to\infty}e^{-\|\eta\|(R-x_1)}\mu(dx)}=0
    $$
    Therefore for $\|y\|\geq R$ we have
    $$
    \Lambda^*(y) =
    \sup_{\eta}{\left(\|y\|\|\eta\|-\Lambda(\eta)\right)}\geq
    \sup_{\eta}{\left(-\log{\int_{\R^2}{e^{-\|\eta\|(R-x_1)}\mu(dx)}}\right)}=\infty
    $$
    If $\|y\|<R$ then, as we show below, $\exists\eta\in\R^2$ such that
    $y=\nabla\Lambda(\eta)$, and by part \eqref{i:3} we have
    $\Lambda^*(y)=<y,\eta>-\Lambda(\eta)>\infty$.
  
    Now, notice that by the dominated convergence theorem
    $$
    \nabla\Lambda(\eta) =
    e^{-\Lambda(\eta)}\int_{\R^2}{xe^{<x,\eta>}\mu(dx)}
    $$
    Thus, $\nabla\Lambda(0)=0$. If $0<\|y\|<R$ then
    $y=Az$, where $z=(\|y\|, 0)$ and $A$ is a rotation.
    Suppose that $\eta$ is such that $\nabla\Lambda(\eta)=z$. Then
    $$
    \nabla\Lambda(A\eta) =
    e^{-\Lambda(A\eta)}\int_{\R^2}{xe^{<x,A\eta>}\mu(dx)}=
    Ae^{-\Lambda(\eta)}\int_{\R^2}{A^{-1}xe^{<A^{-1}x,\eta>}\mu(dx)}=
    A\nabla\Lambda(\eta)=y
    $$
    Thus, it is enough to show that for each $y=(r,0)$, $0<r<R$,
    we can find $\eta$ such that $\nabla\Lambda(\eta)=y$.

    Take $\eta=(\xi,0)$, then
    $$
    \nabla\Lambda(\eta) = \frac{\int_{\R^2}{xe^{\xi x_1}\mu(dx)}}{\int_{\R^2}{e^{\xi x_1}\mu(dx)}}
    $$
    Notice that $x_2e^{\xi x_1}$ is an odd function of $x_2$, so the
    second coordinate of $\nabla\Lambda(\eta)$ is zero.
    Take $c=R-\e$, $\e>0$. Then
    $$
    \frac{\int_{\R^2}{x_1e^{\xi x_1}\mu(dx)}}{\int_{\R^2}{e^{\xi x_1}\mu(dx)}}=
    \frac{\int_{\R^2}{x_1e^{\xi(x_1-c)}\mu(dx)}}{\int_{\R^2}{e^{\xi(x_1-c)}\mu(dx)}}=
    \frac{\int_{x_1<c}{x_1e^{\xi(x_1-c)}\mu(dx)}+\int_{x_1\geq c}{x_1e^{\xi(x_1-c)}\mu(dx)}}
    {\int_{x_1<c}{e^{\xi(x_1-c)}\mu(dx)}+\int_{x_1\geq c}{e^{\xi(x_1-c)}\mu(dx)}}
    $$
    By the dominated convergence theorem the first term in both
    numerator and denominator goes to zero as $\xi\to\infty$. Also,
    $$
    c\int_{x_1\geq c}{e^{\xi(x_1-c)}\mu(dx)}
    \leq\int_{x_1\geq c}{x_1e^{\xi(x_1-c)}\mu(dx)}\leq
    R\int_{x_1\geq c}{e^{\xi(x_1-c)}\mu(dx)}
    $$
    Hence,
    $$
    \forall\e>0\quad R-\e=c\leq\lim_{\xi\to\infty}\frac{\int_{\R^2}{x_1e^{\xi
    x_1}\mu(dx)}}{\int_{\R^2}{e^{\xi x_1}\mu(dx)}}\leq R
    \implies
    \lim_{\xi\to\infty}\frac{\int_{\R^2}{x_1e^{\xi
    x_1}\mu(dx)}}{\int_{\R^2}{e^{\xi x_1}\mu(dx)}}= R
    $$
    Combining this result with the fact that $\nabla\Lambda(0)=0$ we see
    that there does exist $\xi>0$ such that the first coordinate of
    $\nabla\Lambda(\eta)$ is equal to $r$.
    
\end{proof}

\begin{proof}(Of Theorem \ref{thm:mogulskii2}.)
$I_E$ is a good rate function because $\cl E$ is compact and $I_0$ is a
good rate function. To obtain the lower bound it is enough to show
that for any $\gamma\in\Omega(\R^2)\cap\D_{I_E}$ and $\delta>0$ we have
$$
\liminf_{n\to\infty}{\frac{1}{n}\log{\tilde\mu_n(B_{\delta}(\gamma))}}\geq
-I_E(\gamma),
$$
where
$B_{\delta}(\gamma)=\{\phi\in\Omega(\R^2)|\rho(\gamma, \phi)<\delta\}$.
So, let us take some $\gamma\in\Omega(\R^2)\cap\D_{I_E}$ and $\delta>0$.
For convenience we shall omit the explicit dependence on $\gamma$ from
out notation, so $B_{\delta}=B_{\delta}(\gamma)$.
Let $P_{\delta} = \{\phi(0)|\phi\in B_{\delta}\}$,
$F_{\delta,n}=E_n\cap P_{\delta}$. Notice that
$\tilde\upsilon_n(F_{\delta,n})$ is bounded away from zero for sufficiently large $n$. Given $x\in\R^2$ let
$B^x_{\delta} = \{\phi\in B_{\delta}|\phi(0)=x\}$, and let
$\tilde\mu^x_n$ denote the probability law of the path 
$\tilde\Psi\left(x,\frac{X_1}{n},\ldots,\frac{X_n}{n}\right)$. Then
$$
\tilde\mu_n(B_{\delta})=\int_{F_{\delta,n}}{\tilde\mu^x_n(B^x_{\delta})\tilde\upsilon_n(dx)}
$$

Define $\sigma:\Omega(\R^2)\to\Omega_0(\R^2)$ by
$\sigma(\phi)(t)=\phi(t)-\phi(0)$. Then it is easy to see that
$\tilde\mu^x_n(B^x_{\delta})=\tilde\mu^0_n(\sigma(B^x_{\delta}))$.
Let $D_{\delta} = \cap_{x\in F_{\delta/2,n}}{\sigma(B^x_{\delta})}$.
We claim that $\sigma(B_{\frac{\delta}{2}})\subset D_{\delta}$.
Indeed, if $\phi_0\in \sigma(B_{\frac{\delta}{2}})$ then
$\phi_0(t)=\phi(t)-\phi(0)$ for some $\phi\in B_{\frac{\delta}{2}}$. For
any $x\in F_{\delta/2,n}$ define $\psi_x$ by
$\psi_x(t)=\phi(t)-\phi(0)+x$. Then $\sigma(\psi_x)=\phi_0$ and
$$
\rho(\psi_x,\gamma)=\sup_{t\in [0,1]}{\|\phi(t)-\phi(0)+x-\gamma(t)\|}\leq
\sup_{t\in[0,1]}{\|\phi(t)-\gamma(t)\|}+\|x-\phi(0)\| < \delta,
$$
which proves the claim.
It follows that
$$
\tilde\mu_n(B_{\delta})\geq
\tilde\upsilon_n(F_{\delta/2,n})\tilde\mu^0_n\big(\sigma(B_{\frac{\delta}{2}})\big)
$$
Applying Mogulskii's theorem we get
\begin{align*}
\liminf_{n\to\infty}{\frac{1}{n}\log{\tilde\mu_n(B_{\delta})}}&\geq
\liminf_{n\to\infty}{\frac{1}{n}\left(
    \log{\tilde\upsilon_n(F_{\delta/2,n})}+
    \log{\tilde\mu^0_n(\sigma(B_{\delta/2}))}
\right)}\geq\\
&\geq -\inf_{\phi\in\sigma(B_{\delta/2})}{I_0(\phi)}\geq
-I_0(\sigma(\gamma))=-I_E(\gamma)
\end{align*}

To prove the upper bound suppose that $\Gamma$ is closed. Notice that
$\tilde\mu_n(\Gamma)=\tilde\mu_n(\Gamma\cap\Omega^R(\R^2))$ for all $n$, where
$\Omega^R(\R^2)$ is the set of paths with speed bounded by $R$. Hence, we may assume
that $\Gamma$ consists only of paths with speed bounded by $R$. Then it
follows from the Arzela-Ascoli theorem that $\Gamma$ is compact.
Take $\e>0$. Since $I_0$ is lower semicontinuous, for each $\gamma\in\Gamma$ there exists $\delta_{\gamma}>0$
such that $I(\phi)\geq I(\gamma)-\e$ whenever
$\rho(\phi,\gamma)<4\delta_{\gamma}$. Let $\mathcal{U}$ be a finite
subcover of the cover
$\{B_{\delta_{\gamma}}(\gamma)\}_{\gamma\in\Gamma}$ of $\Gamma$. Denote the
cardinality of $\mathcal{U}$ by $N$. Suppose that
$B_{\delta_{\gamma}(\gamma)}\in\mathcal{U}$. For
convenience we set $\delta=\delta_{\gamma}$ and, once again, omit the
explicit dependence on $\gamma$, so
$B_{\delta}=B_{\delta_{\gamma}}(\gamma)$. Define $F_{\delta,n}$, $B^x_{\delta}$,
$\tilde\mu^x_n$ as before, and notice that $\sigma(B_{\delta})=\cup_{x\in
F_{\delta,n}}{\sigma(B^x_{\delta})}$. Then
$$
\tilde\mu_n(B_{\delta})\leq
\tilde\upsilon_n(F_{\delta,n})\tilde\mu^0_n\big(\sigma(B_{\delta})\big)
$$
Applying Mogulskii's theorem we get
$$
\limsup_{n\to\infty}{\frac{1}{n}\log{\tilde\mu_n(B_{\delta})}}\leq
\limsup_{n\to\infty}{\frac{1}{n}\left(
    \log{\tilde\upsilon_n(F_{\delta,n})}+
    \log{\tilde\mu^0_n(\sigma(B_{\delta}))}
\right)}\leq
-\inf_{\phi\in\sigma(\cl B_{\delta})}{I_0(\phi)},
$$
where $\cl B_{\delta}$ denotes the closure of $B_{\delta}$.
Let $\gamma_0=\sigma(\gamma)$ and notice that for any $\phi_0\in\cl
B_{\delta}$ we have $\phi_0(t)=\phi(t)-\phi(0)$, $\phi\in\cl
B_{\delta}\subset B_{2\delta}$ and
$$
\rho(\gamma_0,\phi_0) =
\sup_{t\in[0,1]}{\|\gamma(t)-\gamma(0)-\phi(t)+\phi(0)\|}\leq
\sup_{t\in[0,1]}{\|\gamma(t)-\phi(t)\|} + \|\phi(0)-\gamma(0)\|
<4\delta
$$
Therefore, $\inf_{\phi\in\sigma(\cl B_{\delta})}{I_0(\phi)}\geq
I_0(\sigma(\gamma))-\e=I_E(\gamma)-\e$. Let
$\tilde\mu_n^*=\max{\{\tilde\mu_n(B_{\delta_{\gamma}}(\gamma))|B_{\delta_{\gamma}}(\gamma)\in\mathcal{U}\}}$
and
$\Gamma^*=\{\gamma\in\Gamma|B_{\delta_{\gamma}}(\gamma)\in\mathcal{U}\}$.
Then
$$
\limsup_{n\to\infty}{\frac{1}{n}\log{\tilde\mu_n^*}}\leq
-\min_{\gamma\in\Gamma^*}I_E(\gamma)+\e,
$$
and so
$$
\limsup_{n\to\infty}{\frac{1}{n}\log{\tilde\mu_n(\Gamma)}}\leq
\limsup_{n\to\infty}{\frac{1}{n}\log{(N\tilde\mu_n^*)}}\leq
\limsup_{n\to\infty}{\frac{1}{n}\log{N}}+
\limsup_{n\to\infty}{\frac{1}{n}\log{\tilde\mu_n^*}}\leq
-\min_{\gamma\in\Gamma}I_E(\gamma)+\e
$$
Since $\e$ is arbitrary, the result follows.
\end{proof}

\begin{proof}(Of Proposition \ref{prp:I_prop}.)

    From the proof of Proposition \ref{prp:rate_func} we have
    $J(\phi)=\int_0^1{\tilde\Lambda^*(\|\phi'(t)\|)dt}$, where
    $\tilde\Lambda^*$ is a good strictly convex rate function on
    $\D_{\tilde\Lambda^*}=[0,R)$. Thus, if $\|\phi'(t)\|\geq
    R$ on a set of positive measure then $I(\phi)=\infty$. This
    proves the first inclusion of \eqref{ii:1}. The second inclusion
    follows from the fact that if $\|\phi'(t)\|<R$ a.e. on $[0,1]$
    then for any $[a,b]\subset[0,1]$, $a<b$, we have
    $L(\phi,a,b)=\int_a^b{\|\phi'(t)\|dt}<R(b-a)$.

    Now, Jensen's inequality implies $J(\phi)\geq
    \tilde\Lambda^*(L(\phi))$, and the equality holds only when $\phi$
    has constant speed, i.e. $\|\phi'(t)\|=L(\phi)$ a.e. on $[0,1]$.
    This proves part \eqref{ii:2}. For part \eqref{ii:3} we then have
    $J(\phi)-J(\gamma)$ $\geq$
    $\tilde\Lambda^*(L(\phi))-\tilde\Lambda^*(L(\gamma))$ $\geq$
    $\tilde\Lambda^*(L(\gamma)+\e)-\tilde\Lambda^*(L(\gamma))$. Let $m$
    be the slope of a supporting line of $\tilde\Lambda^*$ at
    $L(\gamma)$. Notice that $m>0$. Then
    $\tilde\Lambda^*(L(\gamma)+\e)-\tilde\Lambda^*(L(\gamma))$ $\geq$
    $m\e$.
\end{proof}

\begin{proof}(Of Proposition \ref{prp:I_prop}.)

    From the proof of Proposition \ref{prp:rate_func} we have
    $J(\phi)=\int_0^1{\tilde\Lambda^*(\|\phi'(t)\|)dt}$, where
    $\tilde\Lambda^*$ is a good strictly convex rate function on
    $\D_{\tilde\Lambda^*}=[0,R)$. Thus, if $\|\phi'(t)\|\geq
    R$ on a set of positive measure then $I(\phi)=\infty$. This
    proves the first inclusion of \eqref{ii:1}. The second inclusion
    follows from the fact that if $\|\phi'(t)\|<R$ a.e. on $[0,1]$
    then for any $[a,b]\subset[0,1]$, $a<b$, we have
    $L(\phi,a,b)=\int_a^b{\|\phi'(t)\|dt}<R(b-a)$.

    Now, Jensen's inequality implies $J(\phi)\geq
    \tilde\Lambda^*(L(\phi))$, and the equality holds only when $\phi$
    has constant speed, i.e. $\|\phi'(t)\|=L(\phi)$ a.e. on $[0,1]$.
    This proves part \eqref{ii:2}. For part \eqref{ii:3} we then have
    $J(\phi)-J(\gamma)$ $\geq$
    $\tilde\Lambda^*(L(\phi))-\tilde\Lambda^*(L(\gamma))$ $\geq$
    $\tilde\Lambda^*(L(\gamma)+\e)-\tilde\Lambda^*(L(\gamma))$. Let $m$
    be the slope of a supporting line of $\tilde\Lambda^*$ at
    $L(\gamma)$. Notice that $m>0$. Then
    $\tilde\Lambda^*(L(\gamma)+\e)-\tilde\Lambda^*(L(\gamma))$ $\geq$
    $m\e$.
\end{proof}

\begin{proof}(Proof Of Corollary \ref{cor:straight_path}.)

    We shall assume that $\delta$ is small enough so that
    $\Gamma_{\delta}\cap\D_{I_0}\neq\emptyset$, otherwise the result is
    obvious. Notice that this implies that $\delta<2r$.
    By Mogulskii's theorem we have
    $$
    \limsup_{n\to\infty}{\frac{1}{n}\log{\frac{\tilde\mu_n(\Gamma_{\delta})}{\tilde\mu_n(\Gamma)}}}\leq
    \limsup_{n\to\infty}{\frac{1}{n}\log{\tilde\mu_n(\Gamma_{\delta})}}
    - \liminf_{n\to\infty}{\frac{1}{n}{\tilde\mu_n(\Gamma)}}\leq
    -\left(\inf_{\gamma\in\cl\Gamma_{\delta}}{I_0(\gamma)} -
    \inf_{\gamma\in\Gamma^{\circ}}{I_0(\gamma)}\right)
    $$
    Notice that if $\gamma\in\Gamma_{\delta}$ then there exists
    $t\in[0,1]$ such that the shortest distance between $\gamma(t)$ and
    the image of $[0,x^*]$ is at least $\delta$. Then it follows from
    simple geometric considerations that $L(\gamma)\geq
    \sqrt{\|x^*\|^2+\delta^2}$. Since $\delta<2r$ and the square root is
    a concave function we obtain
    $\sqrt{\|x^*\|^2+\delta^2}\geq \|x^*\|+m\delta^2$, where
    $m=\frac{1}{4r^2}(\sqrt{\|x^*\|^2+4r^2}-\|x^*\|)$.
    Proposition \ref{prp:I_prop} then implies that for any
    $\gamma\in\Gamma_{\delta}$ we have $I_0(\gamma)-I_0([0,x^*])\geq
    c\delta^2$, for some constant $c>0$.  
    Also, it is easy to see that
    $\inf_{\gamma\in\Gamma^{\circ}}{I_0(\gamma)}=I_0([0,x^*])$.
    Therefore,
    $$
    -\left(\inf_{\gamma\in\cl\Gamma_{\delta}}{I_0(\gamma)} -
    \inf_{\gamma\in\Gamma^{\circ}}{I_0(\gamma)}\right)\leq -c\delta^2
    $$
\end{proof}

\begin{proof}(Of Lemma \ref{lem:shortest_loop}.)

    Take $\delta\in(0,\reach(Z))$ and consider $X^{\delta}$ with
    the induced length structure and intrinsic metric (see
    \cite{burago2001} for details on length structures). It is easy to
    see that $X^{\delta}$ is a non-positively curved (NPC) space. Hence,
    its universal cover, $\tilde X^{\delta}$, is a Hadamard space
    locally isometric to $X^{\delta}$.

    Let $\ell^{\delta}=\inf_{\gamma\in\homt(X^{\delta})}{L(\gamma)}$,
    $\ell^{*}=\inf_{\gamma\in\homt(X)}{L(\gamma)}$. It follows from
    Cartan's theorem that there is a free loop
    $\hat\gamma^{\delta}\in\hat\homt(X^{\delta})$ such that
    $L(\hat\gamma^{\delta})=\ell^{\delta}$. Moreover, any such free loop
    has a geodesic parametrization
    $\gamma^{\delta}\in\homt(X^{\delta})$. It follows that
    $\hat\gamma^{\delta}$ consists of straight line segments
    which are tangent to (pairs of) circles of radius $\delta$ around the
    punctures and circular arcs connecting such straight line segments
    (see Figure \ref{fig:shortest_loop}).
    \begin{figure}[htb!]
        \includegraphics[width=0.5\textwidth]{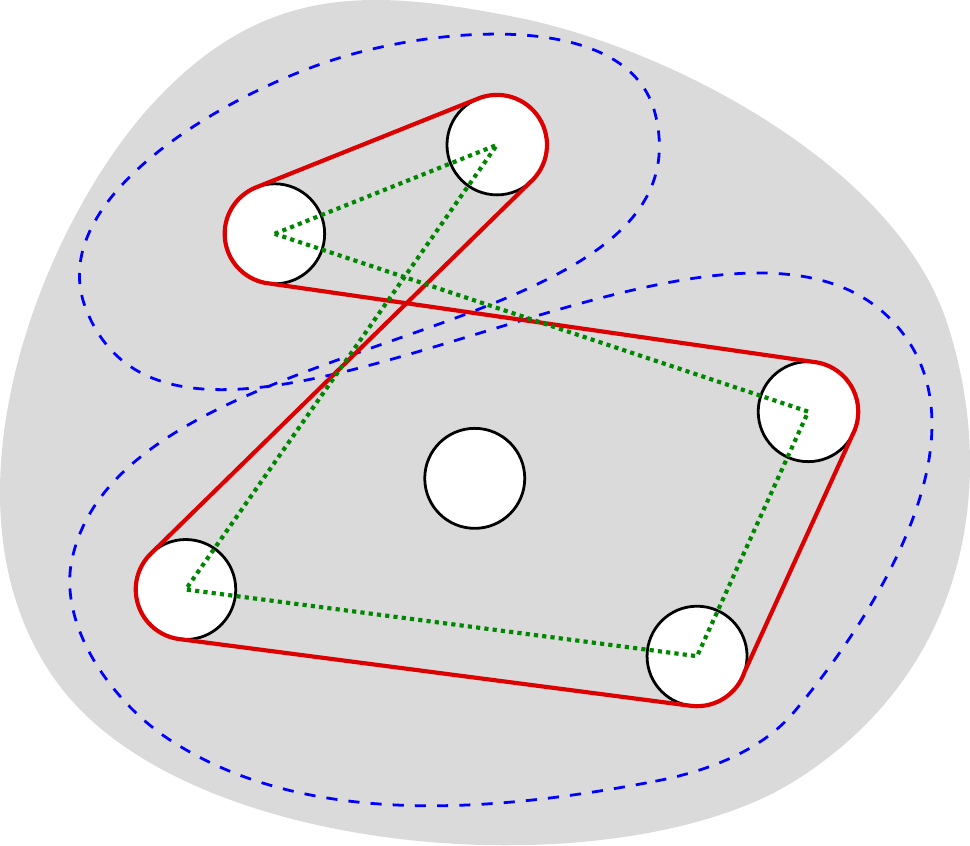}
        \caption{
            \label{fig:shortest_loop}
            The dashed blue line shows a free representative of a free
            homotopy class in the plane with open disks removed. The solid
            red line shows the shortest free loop in the same free homotopy
            class. The dotted green line shows the shortest free
            representative of the corresponding free homotopy class in
            the plane with only centers of the disks removed.
        }
    \end{figure}

    We now show that such a $\hat\gamma^{\delta}$ is unique.  Suppose
    $\hat\gamma_i\in\hat\homt(X^{\delta})$ are such that
    $L(\gamma_i)=\ell^{\delta}$, $i=1,2$. If images of $\hat\gamma_i$
    intersect then we can consider geodesic parametrizations of
    $\hat\gamma_i$ starting at an intersection point. Such closed
    geodesics lift uniquely to geodesics in $\tilde X^{\delta}$
    connecting the same two points. But in a Hadamard space\footnote{Recall
    that a Hadamard space is a complete simply connected space of nonpositive
    curvature.} there is a
    unique geodesic connecting any two points.  Hence,
    $\hat\gamma_1=\hat\gamma_2$, as they have the same geodesic
    representations.
    
    Now assume that $\hat\gamma_1$ and $\hat\gamma_2$ do not intersect.
    A geodesic parametrization of $\hat\gamma_i$, $i=1,2$, is a multiple
    of a simple geodesic, which we denote $\gamma_i$. A periodic
    geodesic defined by $\gamma_i$ can be uniquely lifted to a geodesic
    line $\tilde\gamma_i$ in $\tilde X^{\delta}$, $i=1,2$. Since
    $\hat\gamma_1$ and $\hat\gamma_2$ do not intersect $\tilde\gamma_1$
    and $\tilde\gamma_2$ are parallel. In a Hadamard space parallel
    geodesic lines either coincide or span a convex flat strip. But the
    latter is impossible. Indeed, each $\gamma_i$ does necessarily
    contain a circular arc and $\tilde X^{\delta}$ and $X^{\delta}$ are
    locally isometric, implying that there are points around each
    geodesic line where the metric cannot be flat.

    Let $\delta_m$ be a positive, monotonically decreasing sequence
    converging to zero, and let $\hat\gamma^{\delta_m}$ be the unique
    shortest free loop in $\hat\homt(X^{\delta_m})$. Notice that
    $\lim_{m\to\infty}{L(\hat\gamma^{\delta_m})}=\ell^*$.  Indeed,
    $L(\hat\gamma^{\delta_m})$ is a monotonically increasing sequence
    with a lower bound $\ell^*$, and if a sequence
    $\hat\gamma_i\in\hat\homt(X)$, $i\in\N$ is such that
    $\lim_{i\to\infty}{L(\hat\gamma_i)}=\ell^*$ then for any $i\in\N$ there exists some $M\in\N$ such that for all $m>M$
    $\hat\gamma_i\in\hat\homt(X^{\delta_m})$ $\implies$
    $L(\hat\gamma_i)$ $\geq$ $L(\hat\gamma^{\delta_m})$.  Let
    $\Gamma\subset\homt(X)$ be the set of all constant speed
    parametrizations of all $\hat\gamma^{\delta_m}$, $m\in\N$. Then it
    follows from the Arzela-Ascoli theorem that $\Gamma$ is relatively
    compact (in $\LL(\R^2)$). Hence, we can find a converging (in $\LL(\R^2)$)
    subsequence $\gamma_{m_j}$ of constant speed parametrizations of
    $\hat\gamma^{\delta_{m_j}}$, and
    $\lim_{j\to\infty}{\gamma_{m_j}}=\gamma^*\in\cl\homt(X)$.  Let
    $\hat\gamma^*=\pi_{\LL}(\gamma^*)$. Clearly,
    $L(\hat\gamma^*)=\ell^*$. Moreover, the structure of the shortest
    free loop in $X^{\delta}$ implies that $\hat\gamma^*$ consists of
    straight line segments connecting punctures (see Figure
    \ref{fig:shortest_loop}).

\end{proof}

We now prove our main results: Proposition \ref{prp:main} and Theorem
\ref{thm:main}. The proof of Lemma \ref{lem:lower_bound} is given after
a series of auxiliary technical lemmas following the proof of Theorem
\ref{thm:main}.

\begin{proof}(Of Proposition \ref{prp:main}.)

First, let us prove the upper bound. We may assume that
$\Gamma\cap\D_{I}\neq\emptyset$, otherwise the inequality is trivial.
\begin{align*}
&\limsup_{n\to\infty}{\frac{1}{n}\log{\nu_n(\Gamma)}} = 
\limsup_{n\to\infty}{\left(\frac{1}{n}\log{\left[\mu_n(\iota(\Gamma\cap\homt^R_n(X)))\right]}
- \frac{1}{n}\log{\left[\mu_n(\iota(\homt^R_n(X))\right]}\right)}\leq\\
&\limsup_{n\to\infty}{\frac{1}{n}\log{\left[\mu_n(\iota(\Gamma\cap\homt^R_n(X)))\right]}}
- \liminf_{n\to\infty}{\frac{1}{n}\log{\left[\mu_n(\iota(\homt^R_n(X))\right]}}
\end{align*}
Applying Lemma \ref{lem:lower_bound} to the second term we obtain
$$
\liminf_{n\to\infty}{\frac{1}{n}\log{\left[\mu_n(\iota(\homt^R_n(X))\right]}}
\geq -\inf_{\gamma\in\homt(X)}{I(x)}=-I^*
$$
To bound the first term, take $\e>0$ and let
$$
\Gamma_{\e}=\{\gamma|_{[0,1-\delta]}|\gamma\in\cl\Gamma, 0\leq\delta\leq\e\}
$$
Notice that for sufficiently large $n$ we have
$\Gamma_n\subset\Gamma_{\e}$, where $\Gamma_n =
\iota(\Gamma\cap\homt^R_n(X))$. Therefore,
$$
\limsup_{n\to\infty}{\frac{1}{n}\log{\left[\mu_n(\Gamma_n)\right]}}\leq
\limsup_{n\to\infty}{\frac{1}{n}\log{\left[\mu_n(\Gamma_{\e})\right]}}\leq
-\inf_{\gamma\in\Gamma_{\e}}I(\gamma),
$$
where the last inequality follows from the untethered Mogulskii's theorem.
Take $\gamma\in\cl\Gamma$ and suppose that 
$\gamma_{\e} = \gamma|_{[0,1-\e]}\in\D_I$ for all $\e\geq 0$ (otherwise
$I(\gamma)=I(\gamma_{\e})=\infty$ for sufficiently small $\e$). Then
$$
I(\gamma_{\e})=\int_{0}^{1}{\Lambda^*(\gamma'_{\e}(t))dt}=
\frac{1}{1-\e}\int_{0}^{1-\e}{\Lambda^*((1-\e)\gamma'(s))ds}
$$
Since $\Lambda^*(\cdot)=\tilde\Lambda^*(\|\cdot\|)$ and
$\tilde\Lambda^*$ is a nonnegative increasing function,
the monotone convergence theorem yields $I(\gamma_{\e})\to I(\gamma)$ as
$\e\to 0$. Since $I$ is a good rate function, it attains its infimum on
$\cl\Gamma$ and on $\Gamma_{\e}$. Let $\gamma^*\in\cl\Gamma$ be such
that $I(\gamma^*)=\inf_{\gamma\in\cl\Gamma}{I(\gamma)}$, and let
$I_{\e}=\inf_{\gamma\in\Gamma_{\e}}{I(\gamma)}$. Then
$I_{\e}=I(\gamma_{\e}^*)+\xi(\e)$, where $\gamma^*_{\e} =
\gamma^*|_{[0,1-\e]}$, and $\xi(\e)\to 0$ as $\e\to 0$.
Thus, for all positive $\e$ we have
$$
\limsup_{n\to\infty}{\frac{1}{n}\log{\left[\mu_n(\Gamma_n)\right]}}\leq
-(I(\gamma^*_{\e})+\xi(\e)).
$$
Taking the limit for $\e\to 0$ we get
$$
\limsup_{n\to\infty}{\frac{1}{n}\log{\left[\mu_n(\Gamma_n)\right]}}\leq
-I(\gamma^*)=-\inf_{\gamma\in\cl\Gamma}{I(\gamma)}.
$$

To prove the lower bound, let $\Gamma\subset\homt(X)$ be open. Then
\begin{align*}
&\liminf_{n\to\infty}{\frac{1}{n}\log{\nu_n(\Gamma)}} = 
\liminf_{n\to\infty}{\left(\frac{1}{n}\log{\left[\mu_n(\iota(\Gamma\cap\homt^R_n(X)))\right]}
- \frac{1}{n}\log{\left[\mu_n(\iota(\homt^R_n(X))\right]}\right)}\geq\\
&\liminf_{n\to\infty}{\frac{1}{n}\log{\left[\mu_n(\iota(\Gamma\cap\homt^R_n(X)))\right]}}
- \limsup_{n\to\infty}{\frac{1}{n}\log{\left[\mu_n(\iota(\homt^R_n(X))\right]}}
\end{align*}
Applying Lemma \ref{lem:lower_bound} to the first term we get
$$
\liminf_{n\to\infty}{\frac{1}{n}\log{\left[\mu_n(\iota(\Gamma\cap\homt^R_n(X)))\right]}}\geq
-\inf_{\gamma\in\Gamma}{I(\gamma)}
$$
The second term can be bounded using the same argument as in the case of
the upper bound:
$$
\limsup_{n\to\infty}{\frac{1}{n}\log{\left[\mu_n(\iota(\homt^R_n(X))\right]}}\leq
-\inf_{\gamma\in\cl\homt(X)}{I(\gamma)}=-I^*
$$
\end{proof}

\begin{proof}(Of Theorem \ref{thm:main}.)

    Not surprisingly, the proof is analogous to the proof of Corollary
    \ref{cor:straight_path}.

    We shall assume that $\delta$ is small enough so that
    $\Gamma_{\delta}\cap\D_{I}\neq\emptyset$, otherwise the result is
    obvious. Notice that this implies that $\delta<2R$.
    By Proposition \ref{prp:main}
    $$
    \limsup_{n\to\infty}{\frac{1}{n}\log{\nu_n(\Gamma_{\delta})}}\leq
    -\left(\inf_{\gamma\in\cl\Gamma_{\delta}}{I(\gamma)} - I^*\right)
    $$
    Notice that if $\gamma\in\cl\Gamma_{\delta}$ then there exists
    $t\in[0,1]$ such that the shortest distance between $\gamma(t)$ and
    the image of $\hat\gamma^*$ is at least $\delta$. Then it follows from
    simple geometric considerations that $L(\gamma)\geq
    \sqrt{\ell^2+\delta^2}$, where $\ell$ is the length of
    $\hat\gamma^*$. Since $\delta<2R$ and the square root is
    a concave function we obtain
    $\sqrt{\ell^2+\delta^2}\geq \ell+m\delta^2$, where
    $m=\frac{1}{4R^2}\left(\sqrt{\ell^2+4R^2}-\ell\right)$.
    Proposition \ref{prp:I_prop} then implies that for any
    $\gamma\in\Gamma_{\delta}$ we have $I(\gamma)-I^*\geq
    c\delta^2$, for some constant $c>0$.  
    Therefore,
    $$
    -\left(\inf_{\gamma\in\cl\Gamma_{\delta}}{I(\gamma)} - I^*\right) \leq -c\delta^2
    $$

\end{proof}

The following lemmas, which we needed to prove Lemma \ref{lem:lower_bound},
are adaptations of some standard facts from the large deviation theory.
\begin{lem}
    \label{lem:change_meas}
    Let $\mu_r$ be the uniform probability measure on
    $B_r=\{x\in\R^2|\|x\|<r\}$, $M_r(\eta)$ be the moment generating
    function associated with $\mu_r$, and
    $\Lambda_r(\eta)=\log{M_r(\eta)}$.  Also, let
    $p=\nabla\Lambda_r(\eta)$ for some $\eta\in\R^2$.  Then the random
    variable $Y$ with the probability law $\tilde\mu_r$ defined by
    $$
    \frac{d\tilde\mu_r}{d\mu_r}(x) =
    e^{<x,\eta>-\Lambda_r(\eta)}
    $$
    has expectation $\E(Y)=p$.
\end{lem}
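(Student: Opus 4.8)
The plan is to treat this as the standard Cram\'er tilting identity: $\tilde\mu_r$ is the exponential change of measure of $\mu_r$, and its mean is the gradient of the cumulant generating function evaluated at the tilt parameter $\eta$. Since $B_r$ is bounded, no integrability subtleties arise, so the argument is short.

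First I would check that $\tilde\mu_r$ is genuinely a probability measure. Integrating the stated Radon--Nikodym density against $\mu_r$ gives $\int_{B_r} e^{\langle x,\eta\rangle - \Lambda_r(\eta)}\, \mu_r(dx) = e^{-\Lambda_r(\eta)} M_r(\eta) = 1$, where I use $M_r(\eta) = \int_{B_r} e^{\langle x,\eta\rangle}\mu_r(dx)$ and $\Lambda_r = \log M_r$. The density is everywhere finite and strictly positive because $x \mapsto e^{\langle x,\eta\rangle}$ is bounded above and below on the bounded set $B_r$; for the same reason all moments of $\tilde\mu_r$ are finite, so $\E(Y)$ is well defined.

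Next I would compute the mean directly. From the definition of $\tilde\mu_r$,
$$
\E(Y) = \int_{B_r} x\, \tilde\mu_r(dx) = e^{-\Lambda_r(\eta)}\int_{B_r} x\, e^{\langle x,\eta\rangle}\, \mu_r(dx).
$$
On the other hand, differentiating $M_r(\eta) = \int_{B_r} e^{\langle x,\eta\rangle}\mu_r(dx)$ under the integral sign --- legitimate by dominated convergence, since on the bounded set $B_r$ the integrand and its $\eta$-derivatives are uniformly bounded for $\eta$ in a neighborhood of the given point --- yields $\nabla M_r(\eta) = \int_{B_r} x\, e^{\langle x,\eta\rangle}\mu_r(dx)$, hence $\nabla \Lambda_r(\eta) = M_r(\eta)^{-1}\nabla M_r(\eta) = e^{-\Lambda_r(\eta)}\int_{B_r} x\, e^{\langle x,\eta\rangle}\mu_r(dx)$. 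Comparing the two displays gives $\E(Y) = \nabla\Lambda_r(\eta) = p$, as claimed.

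There is essentially no obstacle here: this is a routine computation. The only step that warrants an explicit word is the interchange of differentiation and integration in evaluating $\nabla M_r(\eta)$, and that is handled exactly as in the proof of Proposition \ref{prp:rate_func}, where the same differentiation-under-the-integral argument is already carried out for $\Lambda$; I would simply refer back to it rather than repeat it.
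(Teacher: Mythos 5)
Your proposal is correct and follows essentially the same route as the paper: compute $\E(Y)$ directly from the tilted density, identify it with $M_r(\eta)^{-1}\nabla M_r(\eta)=\nabla\Lambda_r(\eta)$ by differentiating under the integral sign (justified by dominated convergence), and conclude. The added verification that $\tilde\mu_r$ is a probability measure is a harmless extra that the paper leaves implicit.
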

\begin{proof}
    $$
    \E(Y) = \int_{\R^2}{x\tilde\mu_r(dx)} = 
    \int_{\R^2}{xe^{<x,\eta>-\Lambda_r(\eta)}\mu_r(dx)} = 
    \frac{1}{M_r(\eta)}\int_{\R^2}{xe^{<x,\eta>}\mu_r(dx)}
    $$
    On the other hand, $M_r(\eta)=\int_{\R^2}{e^{<x,\eta>}\mu_r(dx)}$ and
    $$
    p=\nabla\Lambda_r(\eta) =\frac{1}{M_r(\eta)}\nabla M(\eta)=
    \frac{1}{M_r(\eta)}\int_{\R^2}{xe^{<x,\eta>}\mu_r(dx)},
    $$
    where the last equality follows form the dominated convergence
    theorem.
\end{proof}

\begin{lem}
    \label{lem:max_ineq}
    Let $X_1,\ldots,X_m$ be i.i.d random variables in $\R^2$
    with $\E(X_1)=0$, and suppose that the values of $X_1$ lie almost
    surely within a set of diameter $c$. Let $S_k=\sum_{i=1}^{k}{X_i}$.
    Then
    $$
    \P\left(\sup_{1\leq k\leq m}{\|S_k\|\geq \lambda}\right)\leq
    4e^{-\frac{\lambda^2}{mc^2}}
    $$
\end{lem}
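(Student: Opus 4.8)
The plan is to reduce the two-dimensional maximal inequality to a pair of one-dimensional ones and then apply Doob's submartingale inequality together with Hoeffding's lemma. Write $X_j=(X_j^{(1)},X_j^{(2)})$ and $S_k=(S_k^{(1)},S_k^{(2)})$. Since $\|S_k\|\geq\lambda$ forces $|S_k^{(1)}|\geq\lambda/\sqrt 2$ or $|S_k^{(2)}|\geq\lambda/\sqrt 2$, we have the inclusion of events
$$
\Big\{\sup_{1\leq k\leq m}\|S_k\|\geq\lambda\Big\}\subseteq
\bigcup_{i=1}^{2}\Big\{\sup_{1\leq k\leq m}|S_k^{(i)}|\geq\tfrac{\lambda}{\sqrt 2}\Big\},
$$
so by the union bound it suffices to prove $\P\big(\sup_{k}|S_k^{(i)}|\geq a\big)\leq 2e^{-2a^2/(mc^2)}$ for each coordinate $i$ and each $a>0$; taking $a=\lambda/\sqrt 2$ and summing the two contributions then gives exactly $4e^{-\lambda^2/(mc^2)}$. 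Note that because the values of $X_1$ (hence of each $X_j$) lie a.s. in a fixed set of diameter at most $c$, each coordinate $X_j^{(i)}$ takes values in a fixed interval of length at most $c$, and $\E X_j^{(i)}=0$.

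Fix $i$ and set $Y_j=X_j^{(i)}$, $T_k=S_k^{(i)}=\sum_{j\leq k}Y_j$. For any $\theta>0$ the process $e^{\theta T_k}$, $k=0,1,\ldots,m$, is a nonnegative submartingale (its conditional increments are multiplied by $\E[e^{\theta Y}]\geq 1$, by Jensen), so Doob's maximal inequality gives
$$
\P\Big(\sup_{1\leq k\leq m}T_k\geq a\Big)
=\P\Big(\sup_{1\leq k\leq m}e^{\theta T_k}\geq e^{\theta a}\Big)
\leq e^{-\theta a}\,\E\big[e^{\theta T_m}\big]
= e^{-\theta a}\prod_{j=1}^{m}\E\big[e^{\theta Y_j}\big].
$$
By Hoeffding's lemma, a mean-zero random variable whose range lies in an interval of length at most $c$ satisfies $\E[e^{\theta Y_j}]\leq e^{\theta^2 c^2/8}$, whence $\P\big(\sup_k T_k\geq a\big)\leq e^{-\theta a + m\theta^2 c^2/8}$. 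Optimizing over $\theta>0$ (the minimum is attained at $\theta=4a/(mc^2)$) yields $\P\big(\sup_k T_k\geq a\big)\leq e^{-2a^2/(mc^2)}$. Applying the same estimate to $-Y_j$ controls $\sup_k(-T_k)$, and combining the two one-sided bounds gives $\P\big(\sup_k|T_k|\geq a\big)\leq 2e^{-2a^2/(mc^2)}$, as required; assembling the coordinates with $a=\lambda/\sqrt 2$ completes the proof.

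There is no genuine obstacle here: the argument is a packaging of classical facts, equivalently a direct use of the exponential-martingale change of measure of Lemma~\ref{lem:change_meas} combined with Doob's inequality. The only point that needs care is the bookkeeping of numerical constants so that the bound comes out as the clean $4e^{-\lambda^2/(mc^2)}$; this is what dictates the $\sqrt 2$-split in the first step and the use of Hoeffding's lemma with its sharp exponent $\theta^2c^2/8$ rather than a cruder second-moment estimate. Coarser reductions of $\sup_k$ to the single time $k=m$ (for instance via an Etemadi-type inequality) would cost a multiplicative factor in $\lambda$ and so ruin the exponent, which is why the submartingale/Doob route is the one to take.
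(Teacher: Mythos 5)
Your proof is correct and follows essentially the same route as the paper's: the $\sqrt{2}$-split into coordinates, a union bound over the two coordinates and two signs, Doob's maximal inequality applied to the exponential submartingale $e^{\theta T_k}$, Hoeffding's lemma for the moment generating function, and optimization over $\theta$ to obtain the exponent $-\lambda^2/(mc^2)$. No substantive differences.
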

\begin{proof}
    Let $S_{j,k}$, $j=1,2$, denote the $j$-th coordinate of $S_k$.
    Notice that 
    \begin{align*}
    &\P\left(\sup_{1\leq k\leq m}{\|S_k\|\geq \lambda}\right)\leq
    \P\left(\sup_{1\leq k\leq m}{\max\{|S_{1,k}|, |S_{2,k}|\}}\geq
    \frac{\lambda}{\sqrt{2}}\right)\leq\\
    &\P\left(\sup_{1\leq k\leq m}{|S_{1,k}|}\geq
    \frac{\lambda}{\sqrt{2}}\right) +
    \P\left(\sup_{1\leq k\leq m}{|S_{2,k}|}\geq
    \frac{\lambda}{\sqrt{2}}\right)
    \end{align*}
    Also,
    $$
    \P\left(\sup_{1\leq k\leq m}{|S_{j,k}|}\geq
    \frac{\lambda}{\sqrt{2}}\right)=
    \P\left(\sup_{1\leq k\leq m}{S_{j,k}}\geq
    \frac{\lambda}{\sqrt{2}}\right) +
    \P\left(\sup_{1\leq k\leq m}{(-S_{j,k})}\geq
    \frac{\lambda}{\sqrt{2}}\right),\quad j=1,2
    $$
    Now, for any $t>0$ we have
    $$
    \P\left(\sup_{1\leq k\leq m}{S_{1,k}}\geq
    \frac{\lambda}{\sqrt{2}}\right)=
    \P\left(\sup_{1\leq k\leq m}{e^{tS_{1,k}}}\geq
    e^{\frac{t\lambda}{\sqrt{2}}}\right)
    $$
    Since $X_i$, $i=1,\ldots,m$, have zero expectation, $S_{1,k}$ is a
    martingale. Then it follows from Jensen's inequality that
    $e^{tS_{1,k}}$ is a positive submartingale. Therefore, we can employ
    Doob's martingale inequality to obtain
    $$
    \P\left(\sup_{1\leq k\leq m}{e^{tS_{1,k}}}\geq
    e^{\frac{t\lambda}{\sqrt{2}}}\right)\leq
    \frac{\E(e^{tS_{1,m}})}{e^{\frac{t\lambda}{\sqrt{2}}}}
    $$
    Expanding $S_{1,m}$ and using independence of $X_i$, $i=1,\ldots,m$, we get
    $$
    \E(e^{tS_{1,m}})=\prod_{i=1}^{m}{\E(e^{tX_{1,i}})},
    $$
    where $X_{1,i}$ denotes the first coordinate of $X_i$. Since the
    values of $X_{1,i}$ lie within an interval of lengths $c$,
    Hoeffding's lemma yields
    $$
    \E(e^{tX_{1,i}})\leq e^{\frac{t^2c^2}{8}}
    $$
    Therefore,
    $$
    \P\left(\sup_{1\leq k\leq m}{e^{tS_{1,k}}}\geq
    e^{\frac{t\lambda}{\sqrt{2}}}\right)\leq
    e^{-\frac{t\lambda}{\sqrt{2}}+m\frac{t^2c^2}{8}}
    $$
    Optimizing over $t$ we then obtain
    $$
    \P\left(\sup_{1\leq k\leq m}{e^{tS_{1,k}}}\geq
    e^{\frac{t\lambda}{\sqrt{2}}}\right)\leq
    e^{-\frac{\lambda^2}{mc^2}}
    $$
    The above argument produces the same bound for all four
    probabilities
    $\P\left(\sup_{1\leq k\leq m}{(\pm S_{j,k})}\geq
    \frac{\lambda}{\sqrt{2}}\right)$, $j=1,2$. Thus, we get
    $$
    \P\left(\sup_{1\leq k\leq m}{\|S_k\|\geq \lambda}\right)\leq
    4e^{-\frac{\lambda^2}{mc^2}}
    $$
\end{proof}

Recall that $\mu$ denotes the uniform probability measure on
$B_R=\{x\in\R^2|\|x\|<R\}$, $\Lambda$ denotes the logarithmic moment
generating function associated with the probability law $\mu$, and
$\Lambda^*(x) = \sup_{\eta}{[<x,\eta>-\Lambda(\eta)]}$.

\begin{lem}
    \label{lem:max_path}
    Take $m,n\in\N$, $2\leq m\leq n$, and let
    $X_1,\ldots,X_m$ be i.i.d. random variables with the probability
    law $\mu$. Let $e$ be a linear path in $\R^2$, i.e. $e=[v_0, v_1]$,
    $v_0,v_1\in\R^2$, and let $\gamma$ be a piecewise linear path with edge
    traversal time $\frac{1}{m}$ and vertices
    $v_0+S_k$, $S_k=\frac{1}{n}\sum_{i=1}^{k}{X_i}$, $k=0,\ldots,m$, i.e.
    $\gamma = \tilde\Psi_m
    \left(v_0, \frac{1}{n}X_1,\ldots,\frac{1}{n}X_m\right)$. Suppose
    that $\frac{n}{m}\leq\alpha$, $\alpha\|p\|<R$, $p=v_1-v_0$, and let
    $C>0$. Then there exists a constant $D>0$ such that
    $$
    \P\left(\rho(\gamma,
    e)<\lambda, \|\gamma(1)-e(1)\|<\frac{1}{Cn}\right)\geq
    -\frac{m}{n}\Lambda^*\left(\frac{n}{m}p\right)-\lambda\|\eta_p\|+
    \frac{1}{n}\log{\left(\frac{D}{C^2m}-4e^{-\frac{n^2\lambda^2}{4mR^2}}\right)},
    $$
    where $\eta_p\in\R^2$ is such that
    $\frac{n}{m}p=\nabla\Lambda(\eta_p)$, and $\lambda$ is assumed to be such that
    $\frac{n^2\lambda^2}{4mR^2}>\log{\frac{4C^2m}{D}}$.
\end{lem}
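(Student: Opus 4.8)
The plan is to run the exponential change-of-measure (tilting) argument underlying the lower bound in Cram\'er/Mogulskii theory. Put $p=v_1-v_0$ and let $A\subset\Omega(\R^2)$ be the set of paths $\phi$ with $\rho(\phi,e)<\lambda$ and $\|\phi(1)-e(1)\|<\tfrac1{Cn}$, so the quantity to be bounded is $\P(\gamma\in A)$. Since $\|\tfrac nm p\|\le\alpha\|p\|<R$, Proposition~\ref{prp:rate_func}\eqref{i:4} supplies $\eta_p\in\R^2$ with $\nabla\Lambda(\eta_p)=\tfrac nm p$, and Proposition~\ref{prp:rate_func}\eqref{i:3} gives $\Lambda^*(\tfrac nm p)=\langle\eta_p,\tfrac nm p\rangle-\Lambda(\eta_p)$; multiplying the latter by $m$ yields the identity $m\Lambda(\eta_p)-n\langle p,\eta_p\rangle=-m\Lambda^*(\tfrac nm p)$. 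Let $\tilde\mu$ be the tilted law with $\tfrac{d\tilde\mu}{d\mu}(x)=e^{\langle x,\eta_p\rangle-\Lambda(\eta_p)}$; by Lemma~\ref{lem:change_meas} (with $r=R$) a variable with law $\tilde\mu$ has mean $\tfrac nm p$. Taking $\tilde X_1,\ldots,\tilde X_m$ i.i.d.\ with law $\tilde\mu$ and $\tilde\gamma=\tilde\Psi_m\bigl(v_0,\tfrac1n\tilde X_1,\ldots,\tfrac1n\tilde X_m\bigr)$, a change of measure in each coordinate gives
\[
\P(\gamma\in A)=e^{m\Lambda(\eta_p)}\,\E_{\tilde\mu}\Bigl[\mathbf 1_{\{\tilde\gamma\in A\}}\,e^{-\langle\sum_{i=1}^m\tilde X_i,\,\eta_p\rangle}\Bigr].
\]

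On the event $\{\tilde\gamma\in A\}$ one has in particular $\|\tilde\gamma(1)-e(1)\|<\lambda$, i.e.\ $\bigl\|\sum_i\tilde X_i-np\bigr\|<n\lambda$, hence $\langle\sum_i\tilde X_i,\eta_p\rangle\le n\langle p,\eta_p\rangle+n\lambda\|\eta_p\|$. Combining with the identity above,
\[
\P(\gamma\in A)\ \ge\ e^{-m\Lambda^*(\frac nm p)-n\lambda\|\eta_p\|}\ \tilde\P(\tilde\gamma\in A),
\]
so it remains to bound $\tilde\P(\tilde\gamma\in A)$ from below.

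Under $\tilde\mu$ the increments have mean $\tfrac nm p$, so the piecewise-linear path $\tilde\gamma$ has mean exactly $e$. Setting $T_k=\sum_{i=1}^k\bigl(\tilde X_i-\tfrac nm p\bigr)$ --- a centered random walk whose steps take values in a set of diameter $2R$ --- one checks that on the $k$-th edge $\tilde\gamma-e$ is the affine interpolation between $\tfrac1n T_{k-1}$ and $\tfrac1n T_k$, whence $\rho(\tilde\gamma,e)=\tfrac1n\max_{1\le k\le m}\|T_k\|$ and $\|\tilde\gamma(1)-e(1)\|=\tfrac1n\|T_m\|$. Therefore
\[
\tilde\P(\tilde\gamma\in A)\ \ge\ \tilde\P\bigl(\|T_m\|<\tfrac1C\bigr)-\tilde\P\bigl(\max_{1\le k\le m}\|T_k\|\ge n\lambda\bigr)\ \ge\ \tilde\P\bigl(\|T_m\|<\tfrac1C\bigr)-4e^{-\frac{n^2\lambda^2}{4mR^2}},
\]
the last inequality being Lemma~\ref{lem:max_ineq} with $c=2R$.

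It then only remains to prove the local lower bound $\tilde\P\bigl(\|T_m\|<\tfrac1C\bigr)\ge\tfrac{D}{C^2m}$ for a suitable constant $D>0$, which I would obtain from a local central limit estimate: because $\tilde\mu$ has a bounded, compactly supported density, the characteristic function of a single centered step lies in $L^2(\R^2)$, so its $m$-th power is in $L^1$ for every $m\ge2$ and $T_m$ has a continuous density $f_{T_m}$ given by Fourier inversion. Splitting the inversion integral into the window $\{|\xi|\lesssim m^{-1/2}\}$, where the rescaled integrand converges to a nondegenerate Gaussian and which contributes a term of order $m^{-1}$, and its complement, which is exponentially small since the modulus of the characteristic function is at most $e^{-c|\xi|^2}$ near $0$ and (non-lattice, by Riemann--Lebesgue) bounded by a constant strictly below $1$ away from $0$, yields $f_{T_m}(x)\ge D'/m$ uniformly for $x$ in a fixed ball about the mean; the constants stay uniform because for $n/m\in[1,\alpha]$ the tilt $\eta_p$ ranges over a compact set on which $\mathrm{Cov}(\tilde\mu)=\nabla^2\Lambda(\eta_p)$ remains nondegenerate. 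Integrating over $B_{1/C}$ (using the trivial bound $\|T_m\|\le2Rm$ for the residual range of $1/C$) gives the claim; substituting into the displays above and taking $\tfrac1n\log$ produces the stated inequality, the hypothesis $\frac{n^2\lambda^2}{4mR^2}>\log\frac{4C^2m}{D}$ being exactly what makes $\tfrac{D}{C^2m}-4e^{-n^2\lambda^2/(4mR^2)}$ positive so that the logarithm is well defined. The main obstacle is precisely this local central limit lower bound with constants uniform over the admissible ranges of $m$ and of $\eta_p$; the rest is routine tilting bookkeeping, for which the two nontrivial inputs --- the identity $m\Lambda(\eta_p)-n\langle p,\eta_p\rangle=-m\Lambda^*(\tfrac nm p)$ and the maximal inequality Lemma~\ref{lem:max_ineq} --- are already available.
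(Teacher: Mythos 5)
Your proposal is correct and follows essentially the same route as the paper's proof: exponential tilting by $\eta_p$ with the identity $m\Lambda(\eta_p)-n\langle p,\eta_p\rangle=-m\Lambda^*(\tfrac nm p)$, the reduction of $\rho(\gamma,e)$ to the maximum of the centered partial sums, the maximal inequality of Lemma~\ref{lem:max_ineq} with diameter $2R$, and a (uniform) local limit theorem lower bound of order $D/(C^2m)$ for the endpoint probability. The only cosmetic differences are that the paper phrases the tilting as an explicit change of variables on the sets $U_{p,\lambda}$, $U_{0,\lambda}$ and invokes the cited uniform local limit theorems rather than sketching the Fourier argument.
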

\begin{proof}
    First, notice that existence of $\eta_p$ follows from Proposition
    \ref{prp:rate_func}. Also, since $\gamma(\frac{k}{m})=v_0+S_k$ and
    $e(\frac{k}{m}) = v_0+\frac{kp}{m}$, $k=0,\ldots,m$, we get
    $$
    \P\left(\rho(\gamma, e)<\lambda,
    \|\gamma(1)-e(1)\|<\frac{1}{Cn}\right) = 
    \P\left(\sup_{1\leq k\leq m}{\left\|S_k-\frac{kp}{m}\right\|}<
    \lambda, \left\|S_m-p\right\|<\frac{1}{Cn}\right).
    $$
    Let
    \begin{align*}
    &U_{p,\lambda}=\left\{(x_1,\ldots,x_m)\in\R^{2m}|\sup_{1\leq k\leq
    m}{\left\|\frac{1}{n}\sum_{i=1}^{k}{x_i}-\frac{kp}{m}\right\|}<
    \lambda,
    \left\|\frac{1}{n}\sum_{i=1}^{m}{x_i}-p\right\|<\frac{1}{Cn}\right\},\\
    &U_{0,\lambda}=\left\{(x_1,\ldots,x_m)\in\R^{2m}|\sup_{1\leq k\leq
    m}{\left\|\sum_{i=1}^{k}{x_i}\right\|}< \lambda n,
    \left\|\sum_{i=1}^{m}{x_i}\right\|<\frac{1}{C}
    \right\}
    \end{align*}
    Then
    $$
   \P\left(\sup_{1\leq k\leq m}{\left\|S_k-\frac{kp}{m}\right\|}<
    \lambda, \left\|S_m-p\right\|<\frac{1}{Cn}\right) =
    \int_{U_{p,\lambda}}{\prod_{i=1}^{m}\mu(dx_i)}
    $$
    Letting $\frac{d\tilde\mu}{d\mu}(x) =
    e^{<x,\eta_p>-\Lambda(\eta_p)}$ we get
    \begin{align*}
    \int_{U_{p,\lambda}}{\prod_{i=1}^{m}\mu(dx_i)} &=
    e^{m\Lambda(\eta_p)}\int_{U_{p,\lambda}}{e^{-\sum_{i=1}^{m}{<x_i,\eta_p>}}\prod_{i=1}^{m}\tilde\mu(dx_i)}=\\
    &=e^{m\Lambda(\eta_p)-n<p,\eta_p>}\int_{U_{p,\lambda}}{e^{-\sum_{i=1}^{m}{\left<x_i-\frac{n}{m}p,\eta_p\right>}}\prod_{i=1}^{m}\tilde\mu(dx_i)}=\\
    &=e^{m\Lambda(\eta_p)-n<p,\eta_p>}\int_{U_{0,\lambda}}{e^{-\sum_{i=1}^{m}{<z_i,\eta_p>}}\prod_{i=1}^{m}\bar\mu(dz_i)},
    \end{align*}
    where $\bar\mu$ denotes the probability law of
    $Z_1=Y_1-\frac{n}{m}p$, with $Y_1$ having the probability
    law $\tilde\mu$. Since
    $\left<\sum_{i=1}^{m}{z_i},\eta_p\right>\leq\left\|\sum_{i=1}^{m}{z_i}\right\|\|\eta_p\|$
    and $\left\|\sum_{i=1}^{m}{z_i}\right\|<\lambda n$ on
    $U_{0,\lambda}$, we get
    $$
    \int_{U_{0,\lambda}}{e^{-n\sum_{i=1}^{m}{<z_i,\eta_p>}}\prod_{i=1}^{m}\bar\mu(dz_i)}
    \geq e^{-n\lambda\|\eta_p\|} \P\left(\sup_{1\leq k\leq
    m}{\left\|\sum_{i=1}^k{Z_i}\right\|}< \lambda n,
    \left\|\sum_{i=1}^{m}{Z_i}\right\|<\frac{1}{C}
    \right)
    $$
    Notice that
    $$
    \P\left(\sup_{1\leq k\leq m}{\left\|\sum_{i=1}^k{Z_i}\right\|}<
    \lambda n, \left\|\sum_{i=1}^{m}{Z_i}\right\|<\frac{1}{C} \right)\geq
    \P\left(\left\|\sum_{i=1}^{m}{Z_i}\right\|<\frac{1}{C} \right)
    -
    \P\left(\sup_{1\leq k\leq m}{\left\|\sum_{i=1}^k{Z_i}\right\|}\geq
    \lambda n\right)
    $$
    By Lemma \ref{lem:change_meas} $\E(Y_1)=\frac{n}{m}p$, yielding
    $\E(Z_1)=0$. Moreover, the values of $Z_1$ lie within a disk of
    radius $R$. Hence, we can employ Lemma \ref{lem:max_ineq}
    to obtain
    $$
    \P\left(\sup_{1\leq k\leq m}{\left\|\sum_{i=1}^k{Z_i}\right\|}\geq
    \lambda n\right)\leq 4e^{-\frac{n^2\lambda^2}{4mR^2}}
    $$
    To bound the other probability, notice that the covariance matrix,
    $W$, of $Z_1$ is positive definite, $\E(\|Z_1\|^s)<\infty$
    for all $s\geq 1$, and the density of $Z_1$ is bounded everywhere.
    It follows from the results on uniform local limit theorems
    (see e.g. \cite{bhattacharya1986, petrov1964}) that a bounded
    continuous density, $q_m$, of the distribution of
    $\frac{1}{\sqrt{m}}\sum_{i=1}^{m}{Z_i}$ exists and
    $$
    \left|q_m(x)-\phi_{W}(x)\right|\leq\frac{A}{\sqrt{m}(1+\|x\|^3)},
    \quad\forall x\in\R^2,
    $$
    where $A$ is a constant and $\phi_{W}$ denotes the density of the normal
    distribution in $\R^2$ with zero mean and covariance matrix $W$.
    Denoting by $B_{\frac{1}{C\sqrt{m}}}$ the ball of radius
    $\frac{1}{C\sqrt{m}}$ centered at the origin we then get
    $$
    \P\left(\left\|\sum_{i=1}^{m}{Z_i}\right\|<\frac{1}{C} \right)=
    \P\left(\left\|\frac{1}{\sqrt{m}}\sum_{i=1}^{m}{Z_i}\right\|<\frac{1}{C\sqrt{m}} \right)=
    \int_{B_{\frac{1}{C\sqrt{m}}}}{q_m(x)dx}\leq \frac{D}{C^2m},
    $$
    where $D$ is another constant. Therefore,
    $$
    \frac{1}{n}\log{\P\left(\sup_{1\leq k\leq
    m}{\left\|S_k-\frac{kp}{m}\right\|}< \lambda\right)}\geq
    -\frac{m}{n}\left(\left<\frac{n}{m}p,\eta_p\right>-\Lambda(\eta_p)\right)-\lambda\|\eta_p\|+
    \frac{1}{n}\log{\left(\frac{D}{C^2m}-4e^{-\frac{n^2\lambda^2}{4mR^2}}\right)}
    $$
    The result of the lemma follows from the fact that
    $$
    \left<\frac{n}{m}p,\eta_p\right>-\Lambda(\eta_p)\leq
    \sup_{\eta}{\left(\left<\frac{n}{m}p,\eta\right>-\Lambda(\eta)\right)}=\Lambda^*\left(\frac{n}{m}p\right)
    $$
\end{proof}

\begin{proof}(Of Lemma \ref{lem:lower_bound}.)
It is enough to show that for every $\gamma\in\homt(X)\cap\D_I$
and every $\e>0$ we have
$$
-I(\gamma) \leq
\liminf_{n\to\infty}{\frac{1}{n}\log{\mu_n(\Gamma^{3\e}_n(\gamma))}},
$$
where $\Gamma^{3\e}_n(\gamma)=\iota(\Gamma^{3\e}(\gamma)\cap\homt^R_n(X))$, 
and $\Gamma^{3\e}(\gamma)=\{\alpha\in\homt(X)|\rho(\alpha,\gamma)<3\e\}$
is a ball of radius $3\e$ centered at $\gamma$. Notice that for small
enough $\e$ any loop $\varphi$ such that $\rho(\varphi,\gamma)<3\e$
belongs to $\homt(X)$. Using Proposition
\ref{prp:path_approx} we can find a piecewise linear loop $\gamma_{PL}$
such that $\Gamma^{2\e}(\gamma_{PL})\subset\Gamma^{3\e}(\gamma)$.
Moreover, convexity of $\Lambda^*$ implies that $I(\gamma)\geq
I(\gamma_{PL})$. Therefore, it suffices to show that 
$$
-I(\gamma_{PL}) \leq
\liminf_{n\to\infty}{\frac{1}{n}\log{\mu_n(\Gamma^{\delta}_n(\gamma_{PL}))}},
$$
for $\delta\leq 2\e$.
Denote the vertices of $\gamma_{PL}$ by $v_0,\ldots,v_{\ell}$, and the
edges by $e_0,\ldots, e_{\ell}$. For convenience, we set $v_{\ell+1}=v_0$.
Let $t_i$ be such that
$\gamma_{PL}(t_i)=v_i$, $i=0,\ldots,\ell+1$. As before, denote by $V_0$ the
random variable with the probability law $\upsilon_n$ and by
$X_1,\ldots,X_n$ i.i.d. random variables with the probability law
$\mu$, and let $\psi=\tilde\Psi_n
\left(V_0, \frac{X_1}{n},\ldots,\frac{X_n}{n}\right)$, 
$\varphi=\tilde\Phi_n
\left(V_0, \frac{X_1}{n},\ldots,\frac{X_n}{n}\right)$ (i.e.
$\psi=\iota(\varphi)$).
Then for sufficiently large $n$ we have
$$
\mu_n(\Gamma^{\delta}_n(\gamma_{PL})) = \P(\rho(\varphi,
\gamma_{PL})<\delta)\geq
\P\left(\rho(\psi,\gamma_{PL})<\delta,
\|\psi(0)-\gamma_{PL}(0)\|<\frac{R}{2^{\ell+2}n}, \|\psi(1)-\psi(0)\|<\frac{R}{n}\right)
$$
Denote the right hand side of the above inequality by
$P$. Let $r_i=\frac{R}{2^{\ell+2-i}n}$, and denote by
$B_{r_i}(v_i)$ the open ball of radius $r_i$ centered at $v_i$,
$i=0,\ldots,\ell$.
Given $x\in B_{r_0}(v_0)$ let $\psi_x = \tilde\Psi_n
\left(x, \frac{X_1}{n},\ldots,\frac{X_n}{n}\right)$ and let
$$
P_x = \P\left(\rho(\psi_x,\gamma_{PL})<\delta, \|\psi_x(1)-\psi_x(0)\|<\frac{R}{n}\right)
$$
Notice that $P=\int_{B_{r_0}(v_0)}{P_x \upsilon_n(dx)}$.

We shall now bound $P_x$ from below.
Let $N_i$ be the integer part of $t_i n$, i.e. $N_i=[t_i n]$,
$i=0,\ldots,\ell+1$, and let
$n_i=N_{i+1}-N_{i}$, $i=0,\ldots,\ell$. Take $x_i\in B_{r_i}(v_i)$, and
let $\psi_i=\tilde\Psi_{n_i}
\left(x_i, \frac{X_{n_{i}+1}}{n},\ldots,\frac{X_{n_{i+1}}}{n}\right)$,
$i=0,\ldots,\ell$,
$$
P_i = \P\left(\rho(\psi_i,e_i)<\delta, \|\psi_i(1)-e_i(1)\|<r_{i+1}\right)
$$
Notice that $P_{x_0}\geq \prod_{i=0}^{\ell}{P_i}$ and
$$
P_i\geq \P\left(\rho(\psi_i,e_i)<\delta,
\|\psi_i(1)-(x_i+p_i)\|<r_{i}\right)=Q_i,
$$
where $p_i=v_{i+1}-v_i$.  We also have $t_{i+1}-t_{i}-\frac{1}{n}\leq
\frac{n_i}{n}\leq t_{i+1}-t_{i}+\frac{1}{n}$. Since
$\gamma_{PL}\in\D_I$, the speed of $\gamma_{PL}$ is strictly bounded by
$R$, so $\frac{\|p_i\|}{t_{i+1}-t_{i}}<R$. Hence, for large enough $n$ we
can employ Lemma \ref{lem:max_path} to obtain
$$
\frac{1}{n}\log{P_i}\geq
-\frac{n_i}{n}\Lambda^*\left(\frac{n}{n_i}p_i\right)-\delta\|\eta_{i}\|+\xi_i(n),
$$
where $\eta_i$ are such that $\nabla\Lambda(\eta_i)=\frac{n}{n_i}p_i$,
and $\xi_i(n)\to 0$ as $n\to\infty$. Therefore,
\begin{align*}
&\frac{1}{n}\log{P}=\frac{1}{n}log{\int_{B_{r_0}(v_0)}{P_x
\upsilon_n(dx)}}\geq
\frac{1}{n}\log{\left[\upsilon_n(B_{r_0}(v_0))\right]}+\frac{1}{n}\log{\sum_{i=0}^{\ell}\log{P_i}}\geq
\\
&-\sum_{i=0}^{\ell}{\frac{n_i}{n}\Lambda^*\left(\frac{n}{n_i}p_i\right)}-\delta\sum_{i=0}^{\ell}\|\eta_{i}\|+\xi(n),
\end{align*}
where
$\xi(n)=\sum_{i=0}^{\ell}{\xi_i(n)}+\frac{1}{n}\log{\left[\upsilon_n(B_{r_0}(v_0))\right]}\to
0$ as $n\to\infty$.
Taking the limit we get
$$
\liminf_{n\to\infty}{\frac{1}{n}\log{\mu_n(\Gamma^{\delta}_n(\gamma_{PL}))}}\geq
-\sum_{i=0}^{\ell}{(t_{i+1}-t_{i})\Lambda^*\left(\frac{p_i}{t_{i+1}-t_{i}}\right)}-\delta\sum_{i=0}^{m}\|\eta_{p_i}\|,
$$
where $\eta_{p_i}$ are such that
$\frac{p_i}{t_{i+1}-t_{i}}=\nabla\Lambda(\eta_{p_i})$. Notice that for
$t\in(t_{i}, t_{i+1})$ we have
$\gamma'_{PL}(t)=\frac{p_i}{t_{i+1}-t_{i}}$. Hence,
$$
\liminf_{n\to\infty}{\frac{1}{n}\log{\mu_n(\Gamma^{\delta}_n(\gamma_{PL}))}}\geq
-\sum_{i=0}^{\ell}{\int_{t_i}^{t_{i+1}}{\Lambda^*\left(\gamma'_{PL}(t)\right)}}-\delta S
= -I(\gamma_{PL})-\delta S,
$$
where $S=\sum_{i=0}^{m}\|\eta_{p_i}\|$.
Now,
$$
\liminf_{n\to\infty}{\frac{1}{n}\log{\mu_n(\Gamma^{\delta}_n(\gamma_{PL}))}}\geq
\liminf_{\delta\to
0}\liminf_{n\to\infty}{\frac{1}{n}\log{\mu_n(\Gamma^{\delta}_n(\gamma_{PL}))}}\geq
\liminf_{\delta\to 0}{(-I(\gamma_{PL})-\delta S)} = -I(\gamma_{PL}).
$$

\end{proof}

\subsection{Sampling in $G_n$}
We now turn to the results related to sampling in our loop space $G_n$.

\begin{proof}(Of part \ref{i:samp_main1} of Proposition \ref{prp:sampling_main}.)

    First, we show that $\{\vs_i\}$ is $\nu$-irreducible. Since $G$ is
    an open bounded and connected subset of $R^{2(n+1)}$ and $\nu$ is a
    (rescaled) Lebesgue measure, it is enough to show that each $\vb\in
    G$ has a $\nu$-communicating neighborhood. We call a Borel set
    $B\subset G$ $\nu$-communicating if $\vb\in B$ and all Borel subsets
    $A\subset B$ with $\nu(A)>0$ there exists $m\in\N$ such that
    $P^m(\vb,A)>0$.
    
    Given $\vb=(v_0,\ldots,v_n)\in G$, define $v_{-1}=v_n, v_{n+1}=v_0$,
    and let $O_{\vb}=B_{\delta}(v_0)\times\cdots\times B_{\delta}(v_n)$, where
    $B_{\delta(v_i)}$ denotes a disk of radius $\delta$ centered at
    $v_i$, and $\delta>0$ is such that for all $i=0,\ldots,n$ we have
    $Z\cap B_{\delta}(v_i)=\emptyset$ and $B_{\delta}(v_i)\subset
    B_{\e}(w_{i-1})\cap B_{\e}(w_{i+1})$ for any $w_{i-1}\in
    B_{\delta}(v_{i-1})$, $w_{i+1}\in B_{\delta}(v_{i+1})$.  Define
    $\pi_i:\R^{2(n+1)}\to\R^2$ by $\pi(v_0,\ldots,v_n)=v_i$. It follows
    from the Chapman-Kolmogorov equations that $O_{\vb}$ is
    communicating if for any $\wb\in O_{\vb}$, any $i\in\{0,\ldots,n\}$,
    and any Borel subset $A_i\subset B_{\delta}(v_i)$ with
    $\lambda_2(A_i)>0$, where $\lambda_2$ denotes the $2$-dimensional
    Lebesgue measure, the probability $\P(\pi_i(\vs_{j+1})\in
    A_i|\vs_{j}=\wb)>0$. But it is easy to see that
    this probability is proportional to $\lambda_2(A_i)$.

    To prove aperiodicity it is enough to show that for any Borel set
    $A\subset G$ with $\nu(A)>0$ there exists $\vb\in A$ such that
    $P(\vb,A)>0$. Take $A\subset G$ with $\nu(A)>0$. Define
    $\hat\pi_i:\R^{2(n+1)}\to\R^{2n}$ by
    $\hat\pi_i(v_0,\ldots,v_n)=(v_0,\ldots,v_{i-1},v_{i+1},\ldots,v_n)$.
    Let $\hat{A}_i=\hat\pi_i(A)$, and for $\hat\vb=$
    $(v_0,\ldots,v_{n-1})\in$ $\hat{A}_i$ let 
    $A_i(\hat\vb)=$ $\{v\in\R^2|$
    $(v_0,\ldots,$ $v_{i-1},v,v_{i},\ldots,$ $v_{n-1})\in$ $A\}$. Since $\lambda_{2(n+1)}(A)>0$, where $\lambda_{2(n+1)}$ is the
    $2(n+1)$-dimensional Lebesgue measure, for all $i=0,\ldots,n$ we have
    $\lambda_{2n}(\hat{A}_i)>0$ and there exists $\hat\vb_i\in\hat{A}_i$
    such that $\lambda_2(A_i(\hat\vb_i))>0$. But this implies that
    $P(\vb,A)>0$ for some $\vb\in A$.

    To show that $\{\vs_i\}$ is Harris recurrent it is enough to show
    that for any initial state, with probability $1$, the chain
    eventually moves in every coordinate direction (see Theorem 12 from
    \cite{roberts2006}).  But this is obvious, since the probability
    that a particular vertex does not move after $k$ steps is
    $\left(\frac{n}{n+1}\right)^k\to 0$ as $k\to\infty$.
\end{proof}

The proof of part \ref{i:samp_main2} of Proposition \ref{prp:sampling_main},
which establishes the needed convergence result, relies of several auxiliary
results.

Notice that $G_n$ is path connected if and only if any
$\gamma_0,\gamma_1\in\homt^R_n(X)$ are freely homotopic within
$\homt^R_n(X)$, that is, there exists a free homotopy $H$ between
$\gamma_0$ and $\gamma_1$ such that $H(\cdot,t)\in\homt^R_n(X)$ $\forall
t\in[0,1]$. We denote such a homotopy relation by
$\gamma_0\overset{\homt^R_n}{\simeq}\gamma_1$.

To establish existence of a homotopy within $\homt^R_n(X)$ we employ an
algebraic representation of loops in $\LL(X)$ similar to that in
\cite{grigoriev1998}. Let $\mathcal{T}$ be a collection of arbitrarily
oriented edges in an arbitrary (say, Delaunay) triangulation of the
punctures $Z=\{z_1,\ldots,z_K\}$, including bisectors of the outer
angles of the convex hull of $Z$ (see Figure \ref{fig:triangulation}).
In the degenerate case when all the punctures lie on a single straight
line, let's call it $\ell$, $\mathcal{T}$ consists of the line segments
in $\ell\setminus Z$ and additional rays, two per puncture, which are
perpendicular to $\ell$. Notice that the planar decomposition defined by
$\mathcal{T}$ has convex faces. 

\begin{figure}[htb!]
    \centering
    \includegraphics[width=0.7\textwidth]{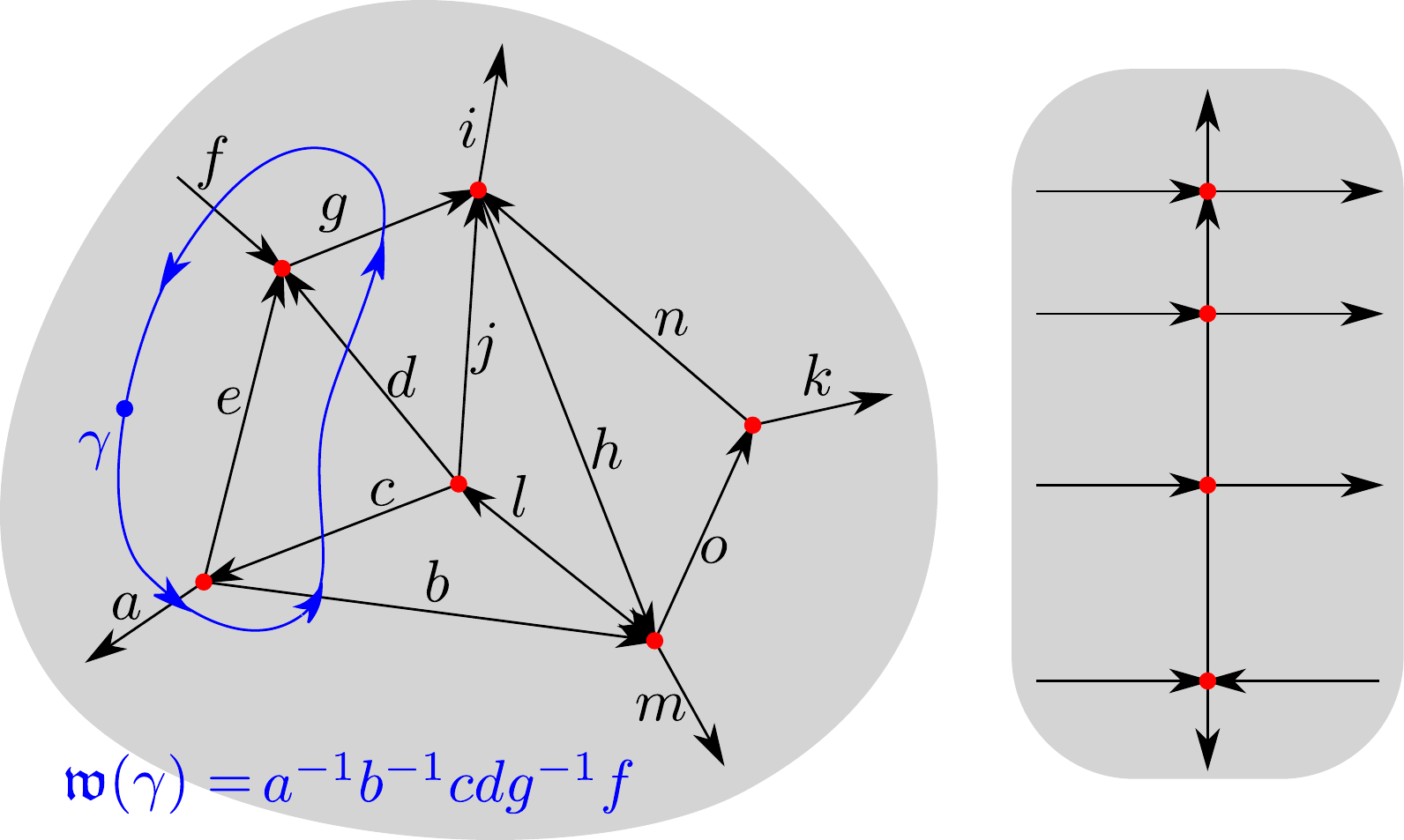}
    \caption{
        \label{fig:triangulation}
        Left: example of an oriented edge collections for a generic
        configuration of punctures, along with associated symbols, a
        loop, and its word representation; Right: degenerate case.
    }
\end{figure}

Associate to each element of $\mathcal{T}$ a symbol, denote the set of
such symbols by $A$, and let $A^{-1}$ be the set of inverse symbols,
i.e. $A^{-1}=\{a^{-1}|a\in A\}$. Let $\mathcal{G}$ be the free group
generated by $A$, and let $\epsilon$ denote the empty word.
Now we can associate to $\gamma\in\LL(X)$
a word over $A$ in the following way. We regard a
loop $\gamma$ as a map from $\R/\Z$ and allow ourselves a slight abuse
notation writing $\gamma(t), t\in\R$, to mean $\gamma(t\,mod\,1)$. Let
$T(\gamma)$ be the collection of connected components of the
intersection of $\gamma$ with $\mathcal{T}$. That is, for each $Q\in
T(\gamma)$ we have $Q\subset E$ for some $E\in\mathcal{T}$, and there
exists a possibly degenerate interval $[s,t]\subset\R$ such that
$\gamma([s,t])=Q$ and $\gamma((s-\e,t+\e))\not\subset E$ $\forall\e>0$.
Notice that $T(\gamma)$ is a finite set. Since
$\gamma([s,t])=\gamma([s+1,t+1])$, we denote by $[t^l_Q,t^r_Q]$ the
first such interval for $Q$ containing non-negative elements.
Generically, each $Q\in T(\gamma)$ is a singleton, but in degenerate
cases some elements of $T(\gamma)$ may be straight line segments. We
order $T(\gamma)$ as follows: for $P,Q\in T(\gamma)$ we define $P\prec
Q\Leftrightarrow t^r_P<t^r_Q$. Suppose $Q\in T(\gamma)$, $Q\subset E$,
$E\in\mathcal{T}$, and let $a\in A$ be the symbol associated to $E$.
Denote by $H_l$ and $H_r$ the left and the right open half spaces
defined by the oriented line corresponding to $E$. We say that $Q$ is a
positive intersection and associate to it the symbol $a$ if
$\exists\e>0$ such that $\gamma((t^l_i-\e,t^l_i))\subset H_l$ and
$\gamma((t^r_i,t^r_i+\e))\subset H_r$. Similarly, $Q$ is a negative
intersection, associated with the symbol $a^{-1}$, if $\exists\e>0$ such
that $\gamma((t^l_i-\e,t^l_i))\subset H_r$ and
$\gamma((t^r_i,t^r_i+\e))\subset H_l$. If $Q$ is neither positive nor
negative, it is said to be a null intersection (and can be associated
with the empty word).  We define $\w(\gamma)$ to be the word
obtained by traversing non-null elements of $T(\gamma)$ in increasing
order and concatenating the corresponding symbols from left to right
(see Figure \ref{fig:triangulation}).

As an element of $\mathcal{G}$, $\w(\gamma)$ may be reduced, i.e.
each pair of consecutive symbols which are inverses of each other is
removed until no such pair exists.  We denote the reduced
$\w(\gamma)$ by $\hat\w(\gamma)$. Notice that $\w(\gamma)$ and
$\hat\w(\gamma)$ represent the same element of $\mathcal{G}$. We call
$\w(\gamma)$ irreducible if $\w(\gamma)=\hat\w(\gamma)$.
Furthermore, $\w(\gamma)$ may be cyclically reduced, meaning that
each pair of cyclically consecutive symbols which are inverses of each
other is removed until no such pair exists. Here, symbols $a$, $b$ in a
word are called cyclically consecutive if they are consecutive or if $a$
is the last symbol and $b$ is the first symbol. A cyclical reduction is
not unique, but any two cyclical reductions of the same word are cyclic
permutations of each other. Let $\W(\gamma)$ denote the set of
all cyclical reductions of $\w(\gamma)$. We call $\w(\gamma)$ cyclically
irreducible if $\w(\gamma)\in\W(\gamma)$. Notice that we can always find
$\omega\in\W(\gamma)$ and $\alpha\in\mathcal{G}$ such
that $\hat\w(\gamma)=\alpha\omega\alpha^{-1}$. Also, since
$\W(\gamma)=\W(\varphi)$ if $\gamma$ and $\varphi$ represent
the same free loop, we define $\W(\hat\gamma)=\W(\gamma)$,
where $\hat\gamma=\pi_{\LL}(\gamma)$, $\gamma\in\LL(X)$.

In what follows, it will be convenient to use some additional notation.
Suppose $\gamma\in\LL(X)$.  For a symbol $a$ in the word
$\w(\gamma)$, let $\kappa_{\gamma}(a)\in T(\gamma)$ be the
intersection associated to $a$. If $a$ and $b$ are two consecutive
symbols in $\w(\gamma)$, we define
$\tau_{\gamma}(a,b)=[t_{\kappa_{\gamma}(a)}^r,
t_{\kappa_{\gamma}(b)}^r]\subset\R$. If $a$ is the last and $b$ is the
first symbol of $\w(\gamma)$, define
$\tau_{\gamma}(a,b)=[t_{\kappa_{\gamma}(a)}^r,
t_{\kappa_{\gamma}(b)}^r+1]\subset\R$. If symbols $a$ and $b$ are not
cyclically consecutive, then there is a sequence of cyclically
consecutive pairs $(a_i, a_{i+1})$, $i=0,\ldots,m$, such that $a_0=a$,
$a_{m+1}=b$, and we define
$\tau_{\gamma}(a,b)=\cup_{i=0}^{m}{\tau_{\gamma}(a_i,a_{i+1})}$.
When it is clear from the context
which loop $\gamma$ is under consideration, we will omit the dependence
on $\gamma$ in our notation and write $\kappa$ and $\tau$.  If
$\gamma\in\LL_{PL}(X)$, then we also define $\tau_{\gamma}^-(a,b)$ and
$\tau_{\gamma}^+(a,b)$ to be the largest (resp.  smallest) closed
interval contained in (resp. containing) $\tau_{\gamma}(a,b)$ whose end
points are vertices of $\gamma$. Finally, for a pair $(a,b)$ of symbols in $\w(\gamma)$ we let
$\gamma|_{a,b}=\gamma|_{\tau(a,b)}$, $L(\gamma,a,b)=L(\gamma|_{a,b})$.

\begin{lem}
    \label{lem:words}
    Loops $\gamma_0,\gamma_1\in\LL(X)$ are homotopic if and only if
    $\hat\w(\gamma_0)=\hat\w(\gamma_1)$.  Furthermore,
    $\gamma_0,\gamma_1$ are freely homotopic if and only if
    $\W(\gamma_0)=\W(\gamma_1)$.
\end{lem}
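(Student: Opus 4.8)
The plan is to prove both equivalences geometrically, by making precise the idea that $\w(\gamma)$ records the sequence of signed crossings of $\gamma$ with the cut system $\mathcal{T}$. The only features of $\mathcal{T}$ I will use are: its faces are open and convex (as noted above, in the generic and the degenerate configuration alike); the punctures $Z$ occur only as vertices of the faces and of the edges; each edge of $\mathcal{T}$, with its puncture–endpoints deleted, is a convex arc contained in $X$; and the closure of each face with its corner punctures deleted is contractible, being a convex set minus finitely many of its boundary points, hence star–shaped from an interior point. I will prove the based statement first and deduce the free one.

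For the ``only if'' direction, suppose $\gamma_0,\gamma_1$ are homotopic rel basepoint via $H\colon I\times[0,1]\to X$ (for the free statement, via a free homotopy). After a small perturbation of $H$ I may assume $H$ is in general position with respect to $\mathcal{T}$: for all but finitely many $t$ the loop $H(\cdot,t)$ meets each edge transversally in finitely many interior points, and at each exceptional parameter exactly one ``birth/death'' event occurs, a single simple tangency with one edge that creates or destroys two consecutive crossings of that edge in opposite directions. What makes this the whole list of events is that $H$ never meets a \emph{vertex} of $\mathcal{T}$, since vertices are punctures and $H$ maps into $X$; hence crossings can neither change sign, nor migrate between edges, nor swap their cyclic order without an intervening birth/death. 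Between consecutive events $\w(H(\cdot,t))$ is therefore literally constant, and at a birth/death it changes by insertion or deletion of an adjacent cancelling pair $aa^{-1}$. Thus $\hat\w$ is unchanged, proving one direction of the based statement; when the basepoint is allowed to move, the word additionally undergoes a cyclic permutation, so $\W$ is unchanged.

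For the ``if'' direction, I first show every $\gamma\in\LL(X)$ is homotopic rel basepoint to a loop $\gamma^{\mathrm{red}}$ with $\w(\gamma^{\mathrm{red}})=\hat\w(\gamma)$. After a preliminary transverse perturbation, if $\w(\gamma)$ is not reduced then its reduction removes a pair coming from two consecutive crossings $Q,Q'$ of one edge $E$ in opposite directions with no crossing between them, and with the subarc of $\gamma$ from $Q$ to $Q'$ avoiding the parameter $0$. That subarc together with the segment $[Q,Q']\subset E$ bounds a disk $D$. Here convexity enters: $\partial D$ lies in the closure of the single face on the relevant side of $E$, so $D$ lies in that closed face; since $Q,Q'$ are interior to $E$ the corner punctures are not in $D$, whence $D\cap Z=\emptyset$, and pushing $\gamma$ across $D$ is a homotopy in $X$, rel basepoint, that removes the pair. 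Finitely many such moves produce $\gamma^{\mathrm{red}}$. Next, if $\w(\gamma_0^{\mathrm{red}})=\w(\gamma_1^{\mathrm{red}})$ the two loops cross the same signed sequence of edges; reparametrising so the $i$-th crossings occur at the same time, I slide the $i$-th crossing point of $\gamma_0^{\mathrm{red}}$ to that of $\gamma_1^{\mathrm{red}}$ along the edge (staying in $X$) and, on each interval between consecutive crossings, homotope the corresponding arc of $\gamma_0^{\mathrm{red}}$ to that of $\gamma_1^{\mathrm{red}}$ rel endpoints inside the relevant closed face minus its corner punctures, which is contractible; assembling these yields $\gamma_0^{\mathrm{red}}\simeq\gamma_1^{\mathrm{red}}$ rel basepoint. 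Combining the two steps gives that $\hat\w(\gamma_0)=\hat\w(\gamma_1)$ implies $\gamma_0\simeq\gamma_1$.

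For the free statement, ``only if'' was noted above. For ``if'', suppose $\W(\gamma_0)=\W(\gamma_1)$. Allowing cyclic bigon removals — which are free homotopies, since they may drag the basepoint across an edge, but which stay in $X$ by the same convexity argument — I free-homotope each $\gamma_i$ to a loop whose word is cyclically reduced and lies in $\W(\gamma_i)=\W(\gamma_0)$. Since any two cyclical reductions of a word are cyclic permutations of one another, after rotating the parametrisation of one loop the two resulting loops have the \emph{same} word, and the face-by-face matching of the previous paragraph, now with no basepoint constraint, shows they are freely homotopic; hence $\gamma_0\simeq_{\mathrm{free}}\gamma_1$. Equivalently, in the free group $\mathcal{G}$ the condition $\W(\gamma_0)=\W(\gamma_1)$ says that $\hat\w(\gamma_0)$ and $\hat\w(\gamma_1)$ are conjugate, matching the usual description of free homotopy classes. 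I expect the main obstacle to be the general-position step: one must carefully justify that a generic homotopy of loops in the open surface $X$ decomposes into transverse stretches separated by single embedded, puncture-free bigon births and deaths, with no other local moves; once this is in hand, the remaining ingredients — puncture-freeness of bigons from convexity of faces, contractibility of a convex set minus finitely many boundary points, and the standard fact that conjugate elements of a free group have cyclically-permuted cyclic reductions — are routine, and the degenerate collinear configuration needs no separate treatment since there too the faces of $\mathcal{T}$ are convex.
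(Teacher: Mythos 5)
Your proposal is correct, but it takes a genuinely different route from the paper's. The paper reduces both equivalences to a single contractibility criterion --- a loop $\gamma$ is contractible if and only if $\W(\gamma)=\{\epsilon\}$ --- by applying the word map to the concatenations $\gamma_0\cdot\bar\gamma_1$ and $\gamma_0\cdot\varphi\cdot\bar\gamma_1\cdot\bar\varphi$ and using that $\w$ is multiplicative under concatenation, so that equality of reduced words becomes triviality of a reduced word. The easy half of that criterion is exactly your convexity-based bigon removal; the hard half (a nonempty cyclically irreducible word forces non-contractibility) is proved not by tracking the word through a homotopy but by a winding-number argument: after straightening the loop, the region of nonzero winding number must contain a puncture, since otherwise the combinatorics of $\mathcal{T}$ would force the interiors of two of its edges to intersect. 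You instead prove homotopy invariance of $\hat\w$ (and of $\W$ in the free case) directly, by putting the homotopy in general position and observing that, because the only vertices of $\mathcal{T}$ are punctures and hence are avoided, the only elementary events are births and deaths of adjacent cancelling pairs, plus cyclic permutation when the basepoint crosses an edge; and you prove the converse implication constructively, matching two loops with the same reduced word crossing-by-crossing and face-by-face, using that a closed convex face minus its corner punctures is star-shaped from any interior point. What your route buys is a transparent invariance proof and a converse that never needs the paper's rather terse edge-intersection step; what it costs is the Cerf-type general-position lemma you yourself flag as the main obstacle, which is true and standard but would need to be spelled out (smoothing the continuous homotopy, transversality to the open arcs of $\mathcal{T}\setminus Z$, Morse position of the time function on the preimage), whereas the paper needs transversality considerations only for a single loop. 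A minor caveat common to both arguments: finiteness of $T(\gamma)$, on which the definition of $\w(\gamma)$ rests, is automatic only for piecewise linear (or otherwise tame) loops, which is the case in every application of the lemma.
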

\begin{proof}
    Notice that $\gamma_0$ and $\gamma_1$ are homotopic if and only if a
    composition $\gamma_0\cdot\bar\gamma_1$ is contractible, where
    $\bar\gamma_1(t)=\gamma_1(1-t)$. Also, $\gamma_0$ and $\gamma_1$ are
    freely homotopic if and only if there exists a path $\varphi$ such
    that $\varphi(0)=\gamma_0(0)$, $\varphi(1)=\gamma_1(0)$, and a
    composition $\gamma_0\cdot\varphi\cdot\bar\gamma_1\cdot\bar\varphi$
    is contractible, where $\bar\varphi(t)=\varphi(1-t)$.  It is clear
    that $\w(\bar\gamma_1)=\w(\gamma_1)^{-1}$, and
    $\w(\varphi\cdot\gamma_1\cdot\bar\varphi)=\sigma\w(\gamma_1)\sigma^{-1}$,
    where $\sigma$ is a word in $\mathcal{G}$. In particular,
    $\W(\varphi\cdot\gamma_1\cdot\bar\varphi)=\W(\gamma_1)$.
    As we show below, a loop $\gamma\in\LL(X)$ is contractible if and
    only if $\W(\gamma)=\{\epsilon\}$. Since
    $\W(\gamma)=\{\epsilon\}\Leftrightarrow
    \hat\w(\gamma)=\epsilon$, it follows that $\gamma_0$ and
    $\gamma_1$ are homotopic if and only if
    $\hat\w(\gamma_0\cdot\bar\gamma_1)=\epsilon$, or equivalently,
    $\hat\w(\gamma_0)=\hat\w(\gamma_1)$. Similarly, $\gamma_0$ is
    freely homotopic to $\gamma_1$ if and only if
    $$
    \hat\w(\gamma_0\cdot\varphi\cdot\bar\gamma_1\cdot\bar\varphi)=\epsilon
    \Leftrightarrow
    \hat\w(\gamma_0)=\hat\w(\varphi\cdot\gamma_1\cdot\bar\varphi).
    $$
    The last equality holds iff there exist
    $\w_{cr}(\gamma_i)\in\W(\gamma_i)$, $i=0,1$,
    and $\alpha,\beta\in\mathcal{G}$ such that
    $$
    \alpha\w_{cr}(\gamma_0)\alpha^{-1}=\beta\w_{cr}(\gamma_1)\beta^{-1}
    \Leftrightarrow
    \w_{cr}(\gamma_0)=\w_{cr}(\gamma_1),
    $$
    which is equivalent to
    $\W(\gamma_0)=\W(\gamma_1)$.

    It remains to show that $\gamma\in\LL(X)$ is contractible if and
    only if $\W(\gamma)=\epsilon$. 
    Notice that if $(a,b)$ is a pair of cyclically consecutive symbols
    of $\w(\gamma)$ which are inverses of each other then
    $\gamma(\tau(a,b))$ belongs to a convex subset of $X$.
    Therefore, we can use a linear homotopy to collapse each
    $\gamma|_{a,b}$ onto the corresponding edge of
    $\mathcal{T}$. It follows that $\gamma$ is freely homotopic to a
    loop $\tilde\gamma$ such that $\w(\gamma)$ is cyclically
    irreducible and $\W(\gamma)=\W(\tilde\gamma)$.

    Thus, if $\W(\gamma)=\{\epsilon\}$ then $\gamma$ is
    freely homotopic to a loop whose image is contained in a convex subset of
    $X$, implying that $\gamma$ is contractible. To prove that a
    contractible $\gamma$ implies $\W(\gamma)=\{\epsilon\}$, we
    suppose that $\W(\gamma)\neq\{\epsilon\}$ and show that
    $\gamma$ cannot be contractible in this case.  We can assume that
    $\w(\gamma)$ is cyclically irreducible. Then for any cyclically consecutive
    symbols $a$ and $b$ of $\w(\gamma)$ $\gamma(\tau(a,b))$ is
    contained in a convex subset of $X$. Thus, we can collapse
    $\gamma|_{a,b}$ onto the straight line segment connecting the
    $\gamma(t_{\kappa(a)}^r)$ and $\gamma(t_{\kappa(b)}^r)$.
    Consequently, $\gamma$ is freely homotopic to a piecewise linear
    loop $\tilde\gamma$ such that $\tilde\gamma(\tau(a,b))$ is a
    straight line segment whenever $a,b$ are cyclically consecutive
    symbols of $\w(\tilde\gamma)$. We can therefore assume that
    $\gamma$ is such a piecewise linear loop. Note that the structure of
    $\mathcal{T}$ implies that $\w(\gamma)$ contains at least three
    symbols. Let $D\subset\R^2\setminus\gamma([0,1])$ be the set of
    points around which $\gamma$ has a non-zero winding number.
    Notice that $D$ is non-empty, open, and bounded, and $\gamma$ cannot
    be contractible if $D$ contains a puncture. Assuming that no
    puncture belongs to $D$ implies that for each symbol $a$ in
    $\w(\gamma)$ there is another symbol $b$ in $\w(\gamma)$
    such that $\gamma(t_{\kappa(a)}^r)$ and
    $\gamma(t_{\kappa(b)}^r)$ belong to the interior of same edge
    from $\mathcal{T}$. It follows that interiors of at least two edges
    from $\mathcal{T}$ intersect, which contradict the definition of
    $\mathcal{T}$.
    
\end{proof}

To prove path connectedness of $G_n$ we employ arguments similar
to those in the proof of Lemma \ref{lem:words}. However, we need to make
sure that $n$ is large enough, so that the corresponding piecewise
linear loop cannot ``get stuck'' around a puncture.

Let $\theta^*$ be the minimum angle in the planar decomposition defined
by $\mathcal{T}$. Notice that if
$n>\frac{2R}{\reach(Z)\sin{\frac{\theta^*}{2}}}$ then an edge of
$\gamma\in\homt^R_n(X)$, say $[v_0,v_1]$, can intersect more than one
edge of $\mathcal{T}$ only if the latter edges are incident to the same
puncture. Moreover, in such a case both $v_0$ and $v_1$ belong to the
ball of radius $\frac{1}{2}\reach(Z)$ centered at this puncture.

Let $\bar\delta=2\reach(Z)\sin{\frac{\theta^*}{2}}$. For
$\delta\in(0,\bar\delta]$, let $\hat\gamma^{\delta}$ denote the shortest
free loop in $\hat\homt(X^{\delta})$, and let $\gamma^{\delta}$ be a
representation of $\hat\gamma^{\delta}$. Recalling the structure of
$\hat\gamma^{\delta}$, we say that a puncture $z\in Z$ is supporting for
$\hat\gamma^{\delta}$ (and for $\gamma^{\delta}$) if the image of
$\hat\gamma^{\delta}$ contains an arc of the circle of radius $\delta$
around $z$. In this case, the circle and the open ball of radius $\delta$ around $z$
will also be called supporting for $\hat\gamma^{\delta}$. We denote the number of supporting punctures for
$\gamma^{\delta}$ by $N^{\delta}$.  Notice that our choice of $\delta$
guarantees that $\w(\gamma^{\delta})$ is cyclically irreducible.

Since $L(\hat\gamma^{\delta})\to L(\hat\gamma^*)$ as $\delta\to 0$,
we define $\delta^*$ $=$ $\sup\left\{
\delta\in(0,\bar\delta]|
R-L(\hat\gamma^{\delta})-\frac{\delta N_{\delta}}{2\reach(Z)}\geq\frac{1}{2}(R-L(\hat\gamma^*))\right\}$,
$N^*=N^{\delta^*}$, and
$n^*=\max{\left\{
        \frac{4R}{\delta^*}+N^*,
        \frac{12N^*R}{R-L(\hat\gamma^*)}
\right\}}$. Our choice of $\delta^*$ implies $n^*>
\frac{2R}{\reach(Z)\sin{\frac{\theta^*}{2}}}$.

\begin{lem}
    \label{lem:simple_pl_hom}
    Let $\gamma\in\homt^R_n(X)$, $n>n^*$, and suppose that $(a,b)$ is a pair
    of cyclically consecutive symbols of $\w(\gamma)$ which are
    inverses of each other. Let
    $\tilde{\w}$ denote the word obtained from $\w(\gamma)$ by
    removing $a$ and $b$. Then there is
    $\tilde\gamma\in\homt^R_n(X)$ such that
    $\gamma\overset{\homt^R_n}{\simeq}\tilde\gamma$ and
    $\w(\tilde\gamma)=\tilde{\w}$.
\end{lem}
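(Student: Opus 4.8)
The plan is to obtain $\tilde\gamma$ by replacing a short stretch of $\gamma$ around the crossings $\kappa(a)$ and $\kappa(b)$, and then to connect $\gamma$ to $\tilde\gamma$ inside $\homt^R_n(X)$ by moving the affected vertices one at a time. Let $E\in\mathcal T$ be the edge carrying $a$ (hence also $b=a^{-1}$), and let $F,F'$ be the two faces of the decomposition sharing $E$, labelled so that the excursion $\gamma|_{\tau(a,b)}$ lies in the convex closed face $\overline F$, with its endpoints on $E$; then the pieces of $\gamma$ immediately before $\kappa(a)$ and after $\kappa(b)$ lie in $\overline{F'}$. Writing $v_0,\dots,v_n$ for the vertices of $\gamma$, let $v_i$ be the last vertex before $\kappa(a)$ and $v_{j+1}$ the first vertex after $\kappa(b)$, so that $v_i,v_{j+1}\in\overline{F'}$, the edges $[v_i,v_{i+1}]$ and $[v_j,v_{j+1}]$ cross $E$, and $v_{i+1},\dots,v_j\in\overline F$; put $k=j+1-i$.

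Since $\overline{F'}$ is convex, $[v_i,v_{j+1}]\subset\overline{F'}$, and because the punctures on $\partial\overline{F'}$ are its extreme points, this segment contains no puncture in its interior, so $[v_i,v_{j+1}]\subset X$. Its length is at most $L(\gamma,t_i,t_{j+1})=\sum_{\ell=i}^{j}\|v_\ell-v_{\ell+1}\|<kR/(n+1)$. I would therefore define $\tilde\gamma$ to be $\gamma$ with $v_{i+1},\dots,v_j$ replaced by the $k-1$ equally spaced points of $[v_i,v_{j+1}]$; each resulting edge has length $\|v_i-v_{j+1}\|/k<R/(n+1)$, so $\tilde\gamma$ has $n+1$ vertices with edges shorter than $R/(n+1)$. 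Moreover the new stretch lies in $\overline{F'}$, so it meets $\mathcal T$ only in null intersections, while all other edges of $\gamma$ are untouched; hence $\w(\tilde\gamma)$ is $\w(\gamma)$ with $\kappa(a),\kappa(b)$ deleted, i.e.\ $\w(\tilde\gamma)=\tilde{\w}$, and in particular $\tilde\gamma\in\homt^R_n(X)$ (that $\gamma|_{[t_i,t_{j+1}]}$ is homotopic rel endpoints in $X$ to $[v_i,v_{j+1}]$, so that $\tilde\gamma\in\homt(X)$, follows because $(\overline F\cup\overline{F'})\setminus Z$ is simply connected).

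It remains to build the homotopy $\gamma\overset{\homt^R_n}{\simeq}\tilde\gamma$. I would move $v_{i+1},\dots,v_j$ to their new positions one at a time, each along a suitable (not necessarily straight) path, keeping the other vertices fixed; the order and the intermediate targets are chosen so that the stretch is first flattened against $E$ from the $\overline F$ side and then slid into $\overline{F'}$. Moving a single vertex $v_\ell$ along a path $w(\cdot)$ keeps the loop in $\homt^R_n(X)$ provided $w$, $[v_{\ell-1},w]$ and $[w,v_{\ell+1}]$ avoid $Z$ and $\|v_{\ell-1}-w\|,\|w-v_{\ell+1}\|<R/(n+1)$; the admissible set of $w$ is exactly a region of the type used by the sampling algorithm, and for $n>n^*$ (so that $R/n<\reach(Z)$, whence the relevant lens meets at most one puncture) it is connected, so such a path exists between any two of its points giving the same homotopy class. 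Edge lengths stay below $R/(n+1)$ throughout by convexity of the norm, and the class is preserved because the loops never leave $X$.

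The step I expect to be the main obstacle is carrying out this deformation when $\gamma$ crosses $E$ very close to one of the two punctures at the endpoints of $E$: then $v_i,v_{j+1}$ and the whole replacement stretch are confined to a tiny neighbourhood of that puncture $z$, and the deformation must be routed around $z$ without ever sweeping an edge through it and without breaking the $R/(n+1)$ length bound. This is precisely where the hypothesis $n>n^*$ — in particular $n>2R/(\reach(Z)\sin(\theta^*/2))$ — is used: it forces $v_i,v_{j+1}$ into the $\tfrac12\reach(Z)$-ball about $z$ and close together, guarantees that no edge of any loop encountered along the way meets two edges of $\mathcal T$ incident to different punctures, and thereby ensures the admissible regions above are connected, so the required short detour can actually be realised with the $k$ available edges. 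The only remaining care is bookkeeping in the degenerate cases (components of $\gamma\cap\mathcal T$ that are segments rather than points, or crossings occurring at vertices of $\gamma$), which merely changes the choice of $v_i,v_{j+1}$ to the vertices flanking the whole components $\kappa(a),\kappa(b)$.
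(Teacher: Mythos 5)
Your overall shape (collapse the excursion and reroute it to the other side of $E$) matches the paper's, but there are two genuine gaps. First, the construction of $\tilde\gamma$ rests on the claim that the flanking vertices satisfy $v_i,v_{j+1}\in\overline{F'}$, and this can fail. For $n>n^*$ an edge of $\gamma$ may still cross \emph{two} edges of $\mathcal T$ provided they are incident to a common puncture; if $[v_i,v_{i+1}]$ crosses some $E''$ before crossing $E$ at $\kappa(a)$, then $v_i$ lies in the face beyond $E''$, not in $\overline{F'}$. In that configuration the chord $[v_i,v_{j+1}]$ need not lie in $\overline{F'}$, may recross $E$, or may pass on the wrong side of the puncture at the end of $E$ (indeed $v_i$ and $v_{j+1}$ can even sit near \emph{opposite} endpoints of $E$), so that $\w(\tilde\gamma)\neq\tilde\w$ and $\tilde\gamma$ is not even freely homotopic to $\gamma$. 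You relegate this to ``bookkeeping in the degenerate cases,'' but it is not degenerate and it is precisely the situation the lemma is hardest for: it is why the paper's proof works with $\tau^-(a,b)$, projects $\gamma(s^-),\gamma(t^-)$ onto the line carrying $e$, splits into three sub-cases according to where those projections land relative to $e$, and checks that the relevant triangles and quadrilaterals contain no punctures --- and why the paper's $\tilde\gamma$ ends up on or just across $e$ rather than on the chord between the flanking vertices.

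Second, the homotopy $\gamma\overset{\homt^R_n}{\simeq}\tilde\gamma$ is asserted rather than constructed. ``First flattened against $E$ from the $\overline F$ side and then slid into $\overline{F'}$,'' with an unspecified order of single-vertex moves and unspecified intermediate targets, is the content to be proved, not a proof: connectedness of each single-move admissible lens does not imply that a global sequence of such moves exists which keeps every intermediate edge shorter than $R/(n+1)$ and never sweeps an edge through a puncture (note that moving $v_{i+1}$ straight to its final position on the chord while $v_{i+2}$ is still deep in $\overline F$ already violates the length bound, so nontrivial intermediate configurations are unavoidable). The paper sidesteps this entirely by deforming the whole sub-path $\gamma|_{[s^-,t^-]}$ by linear homotopies (all vertices moving simultaneously along segments), for which the edge-length bound is automatic from convexity of the norm and puncture-avoidance is verified by showing the swept triangles/quadrilaterals are puncture-free. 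To repair your argument you would need either to carry out the paper's case analysis for the near-puncture configurations, or to supply an explicit admissible schedule of vertex moves --- neither is present.
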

\begin{proof}
    For convenience, let $[s,t]=\tau(a,b)$,
    $[s^-,t^-]=\tau^-(a,b)$, $[s^+,t^+]=\tau^+(a,b)$. Also,
    let $e\in\mathcal{T}$ be the edge of the triangulation containing
    $\gamma(s)$ and $\gamma(t)$.

    Notice that $\gamma([s^-,t^-])$ lies in a convex set. Using a linear
    homotopy we can collapse $\gamma_|{[s^-,t^-]}$ onto the straight line
    segment connecting $\gamma(s^-)$ and $\gamma(t^-)$ without
    increasing edge lengths. If
    $\|\gamma(s)-\gamma(t)\|\leq\|\gamma(s^-)-\gamma(t^-)\|$ then we can
    further deform  $\gamma_|{[s^-,t^-]}$ (using a straight line homotopy) to
    make it coincide with the straight line segment connecting
    $\gamma(s)$ and $\gamma(t)$. Thus, we obtain
    $\tilde\gamma\overset{\homt^R_n(X)}{\simeq}\gamma$ such that
    $\tilde\gamma([s^-,t^-])$ is a straight line segment connecting
    $\gamma(s)$ and $\gamma(t)$, and so
    $\w(\tilde\gamma)=\tilde\w$.

    Suppose now that
    $\|\gamma(s)-\gamma(t)\|>\|\gamma(s^-)-\gamma(t^-)\|$.
    Due to the foregoing discussion we can assume that
    $\gamma([s^-,t^-])$ is a straight line segment. Let $v_s$
    and $v_t$ be projections of $\gamma(s^-)$ and $\gamma(t^-)$ onto the
    line through $\gamma(s)$ and $\gamma(t)$. If both $v_s$ and $v_t$
    belong to (the interior of) $e$ then restrictions on $n$ (and hence
    on the edge length) guarantee that triangles with vertices
    $\gamma(s-), \gamma(s+), v_s$ and $\gamma(t-), \gamma(t+), v_t$ do
    not contain punctures. Therefore, we obtain the needed
    $\tilde\gamma$ by linearly homotoping $\gamma_|{[s^-,t^-]}$ onto
    the straight line segment connecting $v_s$ and $v_t$.
    If only one of $v_s$, $v_t$ belongs to (the interior of) $e$, say $v_s$, then 
    $\|v_s-\gamma(t)\|\leq\|\gamma(s^-)-\gamma(t^-)\|$, so the needed
    $\tilde\gamma$ is obtained by linearly homotoping $\gamma_|{[s^-,t^-]}$
    onto the straight line segment connecting $v_s$ and $\gamma(t)$.
    If both $v_s$ and $v_t$ are outside of $e$, then they have to lie on
    the same side of $e$ (otherwise we would have
    $\|\gamma(s)-\gamma(t)\|\leq\|\gamma(s^-)-\gamma(t^-)\|$). This
    implies that $\|\gamma(s)-\gamma(t)\|\leq\sqrt{2}\frac{R}{n}$.
    Moreover, the quadrilateral
    $\gamma(s^+),\gamma(s),\gamma(t),\gamma(t^+)$ does not contain
    punctures. Therefore, the needed $\tilde\gamma$ is obtained by
    homotoping $\gamma_|{[s^-,t^-]}$ onto a line segment of the same (or
    smaller) length centered at the midpoint of the segment connecting
    $\gamma(s)$ and $\gamma(t)$.

\end{proof}

\begin{lem}
    \label{lem:straight_pl_hom}
    Let $\gamma\in\homt^R_n(X)$, $n>n^*$.  Then there is
    $\tilde\gamma\in\homt^R_n(X)$ such that
    $\gamma\overset{\homt^R_n}{\simeq}\tilde\gamma$ and
    $\w(\tilde\gamma)\in\W(\gamma)$. Moreover, for each pair
    $(a,b)$ of cyclically consecutive symbols in $\w(\tilde\gamma)$
    $\tilde\gamma(\tau^-(a,b))$ is a straight line segment.
\end{lem}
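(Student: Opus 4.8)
The plan is to first make the word cyclically irreducible by iterated use of Lemma \ref{lem:simple_pl_hom}, and then to replace the portions of the loop lying between consecutive crossings of $\mathcal{T}$ by straight segments, being careful to keep exactly $n+1$ vertices with edge lengths below $\frac{R}{n+1}$ at every stage of the homotopy. For the first step: since $\homt(X)$ is nontrivial the chain of homotopies will always keep us in $\homt^R_n(X)\subset\homt(X)$, so $\w$ never becomes cyclically trivial. If $\w(\gamma)$ is not cyclically irreducible it contains a pair $(a,b)$ of cyclically consecutive symbols that are inverses of each other, and, since $n>n^*>\frac{2R}{\reach(Z)\sin\frac{\theta^*}{2}}$, Lemma \ref{lem:simple_pl_hom} produces $\gamma_1\in\homt^R_n(X)$ with $\gamma\overset{\homt^R_n}{\simeq}\gamma_1$ whose word is $\w(\gamma)$ with $a$ and $b$ deleted. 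Since the number of symbols strictly decreases, after finitely many iterations we reach $\gamma'\in\homt^R_n(X)$ with $\gamma\overset{\homt^R_n}{\simeq}\gamma'$ and $\w(\gamma')$ cyclically irreducible. By Lemma \ref{lem:words} we have $\W(\gamma')=\W(\gamma)$, and cyclic irreducibility of $\w(\gamma')$ gives $\w(\gamma')\in\W(\gamma')=\W(\gamma)$.

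For the second step, recall from the proof of Lemma \ref{lem:words} that, $\w(\gamma')$ being cyclically irreducible, for each pair $(a,b)$ of cyclically consecutive symbols of $\w(\gamma')$ the set $\gamma'(\tau(a,b))$ lies in a convex subset $C_{a,b}$ of $X$. Writing $\tau^-(a,b)=[s^-_{a,b},t^-_{a,b}]$, its endpoints are vertices of $\gamma'$ and the chord $[\gamma'(s^-_{a,b}),\gamma'(t^-_{a,b})]$ lies in $C_{a,b}\subset X$. If $w_0,\dots,w_k$ are the vertices of $\gamma'|_{[s^-_{a,b},t^-_{a,b}]}$, I would linearly homotope each $w_j$ to the evenly spaced point $(1-\frac{j}{k})w_0+\frac{j}{k}w_k$ on that chord, performing this simultaneously over all cyclically consecutive pairs; this is consistent because the intervals $\tau^-(a,b)$ have pairwise disjoint interiors and the straightenings fix all their endpoints together with the rest of the loop. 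Since $\|w_k-w_0\|\le\sum_j\|w_{j+1}-w_j\|<\frac{kR}{n+1}$, the target edges have length $<\frac{R}{n+1}$, and by convexity of the Euclidean norm so does every intermediate edge; every intermediate vertex and edge stays in $C_{a,b}\subset X$; and the vertex count remains $n+1$. Hence the homotopy lies in $\homt^R_n(X)$ and yields $\tilde\gamma\in\homt^R_n(X)$ with $\gamma\overset{\homt^R_n}{\simeq}\tilde\gamma$, $\w(\tilde\gamma)=\w(\gamma')\in\W(\gamma)$, and $\tilde\gamma(\tau^-(a,b))$ a straight segment for each cyclically consecutive pair $(a,b)$.

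The step I expect to require the most care is checking that the simultaneous straightening does not alter $\w$, i.e. that the exit parameters $t^r_{\kappa(\cdot)}$ and the non-null crossings of $\mathcal{T}$ are unchanged. This should follow because the straightening of $\tau^-(a,b)$ fixes the pieces of the loop lying in $[t^r_{\kappa(a)},s^-_{a,b}]$ and $[t^-_{a,b},t^r_{\kappa(b)}]$ around each crossing, and the chord, lying inside $C_{a,b}$, produces no new non-null crossings — here the hypothesis $n>n^*$ is what guarantees that an edge of a loop in $\homt^R_n(X)$ meets only a single local cluster of edges of $\mathcal{T}$, so that no unexpected intersections appear, and the degenerate collinear-punctures case needs a separate verification from the explicit description of $\mathcal{T}$. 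Granting this, $\tau_{\tilde\gamma}(a,b)=\tau_{\gamma'}(a,b)$ and $\tau^-_{\tilde\gamma}(a,b)=[s^-_{a,b},t^-_{a,b}]$, which is precisely the interval that has been straightened, completing the argument. Besides this, the only work is the routine bookkeeping ensuring that every intermediate loop of every homotopy used has exactly $n+1$ vertices and all edge lengths below $\frac{R}{n+1}$, which the convexity-of-norm estimates above are designed to supply.
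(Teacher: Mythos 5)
Your proposal is correct and follows essentially the same route as the paper: first iterate Lemma \ref{lem:simple_pl_hom} to reach a cyclically irreducible word, then linearly straighten each $\gamma(\tau^-(a,b))$ inside the convex face containing $\gamma(\tau(a,b))$, using convexity of the norm to keep all intermediate edge lengths below $\frac{R}{n+1}$. The paper's proof is terser (it simply asserts the linear homotopy does not increase edge lengths and does not discuss preservation of the word), so your extra bookkeeping on edge lengths and on the invariance of the non-null crossings is a welcome elaboration rather than a deviation.
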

\begin{proof}
    Repeatedly applying Lemma \ref{lem:simple_pl_hom} we see that
    $\gamma$ is freely homotopic within $\homt^R_n(X)$ to a loop with
    cyclically irreducible word.  Hence, we may assume that
    $\w(\gamma)\in\W(\gamma)$. Now, let $(a,b)$ be a pair of
    cyclically consecutive symbols in $\w(\gamma)$, and let
    $[s^-,t^-]=\tau^-(a,b)$. Then $\gamma([s^-,t^-])$ is contained in a
    convex set. Therefore, we can linearly homotope
    $\gamma|_{[s^-,t^-]}$ onto the straight line segment connecting
    $\gamma(s^-)$ and $\gamma(t^-)$ without increasing edge lengths.
    Repeating this process for each cyclically consecutive pair of
    symbols yields the needed $\tilde\gamma$.
\end{proof}

We need a few more auxiliary results we can prove connectedness of
$G_n$. We shall say that a (free) homotopy, $H$, is a length
non-increasing homotopy $L(H(\cdot,0),t_1,t_2)\leq
L(H(\cdot,s),t_1,t_2)$ for all $[t_1,t_2]\subset[0,1]$ and $s\in[0,1]$.
Since $X^{\delta}$ is an NPC space, standard results regarding NPC
spaces imply the following (see e.g. Proposition III.1.8 in
\cite{bridson1999}):
\begin{lem}
    Let $\delta\in(0,\reach(Z))$.
    \begin{enumerate}
        \item Suppose $\gamma\in\Omega(X^{\delta})$. Then there exists a
            length non increasing homotopy $H$ of $\gamma$ such that
            $H(\cdot,1)$ is a parametrization of the shortest curve
            between $\gamma(0)$ and $\gamma(1)$ homotopic to $\gamma$.
        \item Suppose $\gamma\in\homt(X^{\delta})$. Then there exists a
            length non increasing free homotopy $H$ of $\gamma$ such that
            $H(\cdot,1)$ is a parametrization of the shortest free loop
            in $\hat\homt(X^{\delta})$.
    \end{enumerate}
\end{lem}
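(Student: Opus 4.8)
The plan is to derive everything from the convexity properties of Hadamard spaces, so that the argument is essentially the one packaged in \cite[Prop.~III.1.8]{bridson1999}, with care taken that the homotopies built in the universal cover are equivariant and hence descend. Recall from the proof of Lemma \ref{lem:shortest_loop} that for $\delta\in(0,\reach(Z))$ the space $X^{\delta}$ is NPC, so its universal cover is a Hadamard space $\tilde X^{\delta}$ locally isometric to $X^{\delta}$; write $p$ for the covering projection. In particular $p$ preserves path lengths, geodesics in $\tilde X^{\delta}$ between prescribed endpoints are unique, and the metric of $\tilde X^{\delta}$ is convex.

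For part (1) I would lift $\gamma$ to a path $\tilde\gamma\colon[0,1]\to\tilde X^{\delta}$, taken to be constant speed after reparametrization, and let $\tilde\sigma$ be the constant-speed parametrization of the geodesic from $\tilde\gamma(0)$ to $\tilde\gamma(1)$. Define the geodesic homotopy $\tilde H$ by letting $\tilde H(t,s)$ be the point dividing $[\tilde\gamma(t),\tilde\sigma(t)]$ in the ratio $s:(1-s)$; then $\tilde H(\cdot,0)=\tilde\gamma$, $\tilde H(\cdot,1)=\tilde\sigma$, and $\tilde H(0,\cdot)$, $\tilde H(1,\cdot)$ are constant, so $\tilde H$ is a homotopy rel endpoints. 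Convexity of the metric gives $L(\tilde H(\cdot,s),t_1,t_2)\le(1-s)L(\tilde\gamma,t_1,t_2)+s\,L(\tilde\sigma,t_1,t_2)$ for every $[t_1,t_2]\subseteq[0,1]$; since $\tilde\gamma$ has constant speed and $L(\tilde\sigma)\le L(\tilde\gamma)$ this gives $L(\tilde H(\cdot,s),t_1,t_2)\le L(\tilde\gamma,t_1,t_2)$, and applying the same bound to the homotopy restricted to $[s,1]$, which is again a geodesic homotopy onto $\tilde\sigma$, promotes it to the required monotonicity. Then $H=p\circ\tilde H$ is the desired homotopy of $\gamma$: $H(\cdot,1)=p\circ\tilde\sigma$ is a geodesic, and since the lifts of paths homotopic to $\gamma$ are exactly the paths in $\tilde X^{\delta}$ sharing the endpoints of $\tilde\gamma$, $p\circ\tilde\sigma$ is the shortest curve between $\gamma(0)$ and $\gamma(1)$ homotopic to $\gamma$.

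For part (2), let $g$ be a deck transformation representing the conjugacy class of $\homt(X^{\delta})$ and lift $\gamma$ to $\tilde\gamma$ with $\tilde\gamma(1)=g\,\tilde\gamma(0)$. First run the homotopy of part (1) with $\tilde\sigma$ the geodesic from $\tilde\gamma(0)$ to $g\,\tilde\gamma(0)$; since the $t=0$ track of $\tilde H$ is constantly $\tilde\gamma(0)$ and the $t=1$ track is constantly $g\,\tilde\gamma(0)$, we have $\tilde H(1,s)=g\,\tilde H(0,s)$ for all $s$, so $\tilde H$ is $\langle g\rangle$-equivariant and descends to a length non-increasing free homotopy from $\gamma$ to the geodesic loop $\sigma=p\circ\tilde\sigma$. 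Next, by (the proof of) Lemma \ref{lem:shortest_loop} the shortest free loop $\hat\gamma^{\delta}\in\hat\homt(X^{\delta})$ has a geodesic parametrization whose lift is a fundamental segment of an axis $A_g$ of $g$. Let $\bar q$ be the nearest-point projection of $\tilde\gamma(0)$ onto $A_g$; since $g$ preserves $A_g$, $g\bar q$ is the projection of $g\,\tilde\gamma(0)$. For $u\in[0,1]$ let $\tilde p_u$ be the point at fraction $u$ along $[\tilde\gamma(0),\bar q]$ and $\tilde\sigma_u$ the constant-speed geodesic from $\tilde p_u$ to $g\,\tilde p_u$; then $\tilde\sigma_0=\tilde\sigma$, $\tilde\sigma_1=[\bar q,g\bar q]\subset A_g$, and $\tilde\sigma_u(1)=g\,\tilde\sigma_u(0)$, so $(u,t)\mapsto\tilde\sigma_u(t)$ descends to a free homotopy whose slices have subarc lengths $|t_2-t_1|\,d(\tilde p_u,g\,\tilde p_u)$. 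Convexity of the displacement function $y\mapsto d(y,gy)$, which on $\tilde X^{\delta}$ attains its minimum $|g|$ exactly on $A_g\ni\bar q$, forces $u\mapsto d(\tilde p_u,g\,\tilde p_u)$ to be non-increasing, and at $u=1$ it equals $|g|=L(\hat\gamma^{\delta})$. Concatenating the two homotopies yields the required length non-increasing free homotopy with $H(\cdot,1)$ a parametrization of $\hat\gamma^{\delta}$.

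The only genuine obstacles — the reason this is more than a one-line citation — I expect to be (i) arranging the geodesic homotopies to be $g$-equivariant so that they descend to free homotopies of loops in $X^{\delta}$, rather than merely homotopies of paths in $\tilde X^{\delta}$, and (ii) upgrading the convexity inequality, which a priori only bounds the length of each slice by a convex combination of the lengths of the two end paths, to the asserted monotonicity of every subarc length; the latter uses constant-speed parametrizations together with the fact that the tail $\{\tilde H(\cdot,s):s\in[s_0,1]\}$ of a geodesic homotopy is again a geodesic homotopy. Neither step is serious.
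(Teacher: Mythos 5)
The paper does not actually prove this lemma: it is stated as a standard consequence of $X^{\delta}$ being NPC and is delegated entirely to the cited Proposition III.1.8 of Bridson--Haefliger. Your proposal supplies a self-contained proof of that standard result, via geodesic homotopies in the Hadamard universal cover, convexity of the metric, and, for the free-loop case, convexity of the displacement function $y\mapsto d(y,gy)$ of the deck transformation $g$ together with nearest-point projection onto its axis; the existence of the axis is legitimately borrowed from the proof of Lemma \ref{lem:shortest_loop}. The equivariance checks ($\tilde H(1,s)=g\,\tilde H(0,s)$ and $\tilde\sigma_u(1)=g\,\tilde\sigma_u(0)$) are exactly what is needed for the homotopies to descend, and the chain of estimates $L(\tilde H(\cdot,s),t_1,t_2)\le(1-s)L(\tilde\gamma,t_1,t_2)+s\,L(\tilde\sigma,t_1,t_2)\le L(\tilde\gamma,t_1,t_2)$ --- valid because $\tilde\gamma$ is constant speed and $L(\tilde\sigma,t_1,t_2)=(t_2-t_1)\,d(\tilde\gamma(0),\tilde\gamma(1))$ --- is precisely what the paper's definition of a length non-increasing homotopy asks for, since that definition compares each slice only with the slice at $s=0$ (the printed inequality in the paper is reversed, evidently a typo). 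So your argument is correct and is, in substance, the proof the paper outsources; what it buys is that the lemma no longer rests on an external citation whose hypotheses (completeness, local compactness, the Cartan--Hadamard step) would otherwise have to be matched against $X^{\delta}$ by the reader.

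One step you should delete or repair is the claimed ``promotion to monotonicity'' in $s$. Restricting the geodesic homotopy to $[s_0,1]$ is indeed again a geodesic homotopy onto $\tilde\sigma$, but the slice at $s_0$ is no longer constant speed, and the inequality $L(\tilde\sigma,t_1,t_2)\le L(\tilde H(\cdot,s_0),t_1,t_2)$ that this argument needs can fail on subintervals: already in $\R^2$, if $\tilde\gamma$ backtracks against the direction of $\tilde\sigma$ on $[t_1,t_2]$, the interpolated slice $\tilde H(\cdot,s_0)$ can have an arbitrarily short, even degenerate, restriction to $[t_1,t_2]$, strictly shorter than the corresponding piece of $\tilde\sigma$. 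Fortunately the lemma as defined requires no monotonicity in $s$, only the comparison with $s=0$, which your first inequality already gives. A second, cosmetic point: the subarc-length condition is parametrization dependent, so strictly speaking you prove the statement for the constant-speed reparametrization of $\gamma$; since that is the version used downstream (to control the edge lengths of $\sigma_n(H(\cdot,s))$), this is harmless, but deserves a sentence.
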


Using the specific structure of our space $X$, we can also prove the
following:
\begin{lem}
    \label{lem:straight_pl_hom2}
    Let $\gamma\in\Omega_{PL}(X)$ be a non self intersecting piecewise
    linear path homotopic to the linear path $\tilde\gamma=[\gamma(0), \gamma(1)]$.
    Then there exists a length non increasing homotopy $H$ of
    $\gamma$ such that $H(\cdot,s)$ is a piecewise
    linear path for each $s\in[0,1]$, $H(t,s)$ is a vertex if and
    only if $\gamma(t)$ is a vertex, and
    $\im(H(\cdot,1))=\im(\tilde\gamma)$.
\end{lem}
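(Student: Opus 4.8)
The plan is to use the hypothesis to exhibit a puncture-free polygonal region bounded by $\gamma$ and the segment, and then straighten $\gamma$ onto the segment by a finite sequence of ``ear clips'' performed inside that region, each clip moving a single vertex along a straight line while keeping every edge length from increasing. (Unlike the NPC lemma quoted just above, here one must keep the path piecewise linear with a fixed vertex set, so a direct construction is needed; throughout, ``length non-increasing'' means $L(H(\cdot,s),t_1,t_2)\le L(H(\cdot,0),t_1,t_2)$ for all $[t_1,t_2]\subset[0,1]$ and $s\in[0,1]$.)

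\textbf{Step 1: the puncture-free region.} Put $A=\gamma(0)$, $B=\gamma(1)$. Since $\gamma$ is homotopic to $\tilde\gamma$ in $X$, both endpoints and the whole segment $[A,B]$ lie in $X$, and the loop $\gamma\cdot\bar{\tilde\gamma}$ is null-homotopic in $X$, hence has winding number $0$ about each puncture. In the main case, where $\gamma$ meets the open segment $(A,B)$ only at $A$ and $B$, the curve $\gamma\cup[A,B]$ is a simple polygon bounding a closed disk $\bar D$; the winding number of $\gamma\cdot\bar{\tilde\gamma}$ is constantly $\pm1$ on $\im(D)$, so $Z\cap\bar D=\emptyset$. (If $\gamma$ crosses or runs along $(A,B)$, the same conclusion holds for the closure $\bar D$ of the union of the bounded complementary faces encircled by $\gamma\cdot\bar{\tilde\gamma}$, a finite union of simple polygons meeting $[A,B]$ only along $[A,B]$; the argument below is applied to each piece, processed from the $\gamma$-side toward $[A,B]$, and this routine bookkeeping is suppressed.)

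\textbf{Step 2: ear clipping.} Parametrize $\gamma$ so that its vertices $A=v_0,v_1,\dots,v_k,v_{k+1}=B$ sit at fixed parameters $0=t_0<\dots<t_{k+1}=1$. Triangulate $\bar D$ using only $v_0,\dots,v_{k+1}$ (no Steiner points, which is always possible for a simple polygon); its dual tree has a unique triangle $T^{*}$ incident to $[A,B]$. Take a leaf $T=v_{i-1}v_iv_{i+1}\neq T^{*}$ of the current dual tree; then $T\subset\bar D$ and $[v_{i-1},v_{i+1}]$ is a diagonal. Choose $v_i'$ on $[v_{i-1},v_{i+1}]$ with $\|v_i'-v_{i-1}\|\le\|v_i-v_{i-1}\|$ and $\|v_i'-v_{i+1}\|\le\|v_i-v_{i+1}\|$; writing $\alpha=\|v_{i-1}v_i\|$, $\beta=\|v_iv_{i+1}\|$, $\ell=\|v_{i-1}v_{i+1}\|$, the admissible parameters on $[v_{i-1},v_{i+1}]$ form the interval $[\max(0,1-\beta/\ell),\,\min(1,\alpha/\ell)]$, nonempty by the triangle inequality $\ell\le\alpha+\beta$. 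Now move $v_i$ to $v_i'$ linearly, $v_i(s)=(1-s)v_i+sv_i'$, and simultaneously translate each already-clipped marked point on $[v_{i-1},v_i]$ (resp.\ $[v_i,v_{i+1}]$) so that it keeps its affine parameter on $[v_{i-1},v_i(s)]$ (resp.\ $[v_i(s),v_{i+1}]$), leaving everything else and all the $t_j$ fixed. Every moving point stays in the convex hull of $v_{i-1},v_i,v_{i+1}$, i.e.\ inside $T\subset\bar D$, so no puncture is met; and since $s\mapsto\|v_i(s)-v_{i-1}\|$ is convex with value $\alpha$ at $s=0$ and at most $\alpha$ at $s=1$, it is $\le\alpha$ throughout (likewise $\|v_i(s)-v_{i+1}\|\le\beta$), so each sub-edge along $[v_{i-1},v_i(s)]\cup[v_i(s),v_{i+1}]$ — hence $L(\cdot,t_1,t_2)$ for every $[t_1,t_2]$ — does not increase during the clip. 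Iterating over all leaves of the shrinking dual tree and finishing with $T^{*}$ (its free vertex moved onto $[A,B]$ by the same rule), concatenating the finitely many clips and rescaling $s$ into $[0,1]$, produces a homotopy $H$ that is piecewise linear with vertices exactly at $t_0,\dots,t_{k+1}$ for every $s$, has every sub-arc length non-increasing in $s$, and stays inside $\bar D$.

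\textbf{Step 3: final image, vertices, loose ends.} An induction on the clips shows that after each clip the marked points on every new edge appear in monotone order along it: a clip sends points of $[v_{i-1},v_i]$ to parameters $\le$ that of $v_i'$ on $[v_{i-1},v_{i+1}]$ and points of $[v_i,v_{i+1}]$ to parameters $\ge$ it, order-preservingly. Hence at $s=1$ all points lie on $[A,B]$ in monotone order from $A=H(0,1)$ to $B=H(1,1)$, so $\im(H(\cdot,1))=[A,B]=\im(\tilde\gamma)$. Choosing $v_i'$ in the interior of the admissible interval keeps all edges non-degenerate, and degenerate ears (three collinear vertices) require no motion at all, so ``$H(t,s)$ is a vertex if and only if $\gamma(t)$ is a vertex'' holds throughout. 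The remaining items — the crossing/overlap case of Step 1, and the reparametrization of the finite list of clips into one homotopy — are routine.

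\textbf{Main obstacle.} The difficulty is not topological but lies in meeting the three requirements simultaneously: staying inside the puncture-free region $\bar D$, keeping the combinatorial vertex set unchanged (so no ``ear'' may literally be deleted — the moved vertex must survive as a marked point on a straight stretch), and not lengthening any sub-arc. Forcing each moved vertex onto its diagonal at a point of the admissible interval $[\max(0,1-\beta/\ell),\min(1,\alpha/\ell)]$ is precisely what makes all three compatible, and is the one step that needs care; everything else is standard polygon combinatorics.
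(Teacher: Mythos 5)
Your main case --- where $\gamma$ meets $[A,B]$ only at its endpoints --- is correct and takes essentially the paper's route (triangulate the simple polygon with no Steiner points, then collapse ears), and your admissible interval $[\max(0,1-\beta/\ell),\min(1,\alpha/\ell)]$ for the landing point of each clipped vertex is a genuine refinement: it is exactly what guarantees that no sub-edge ever lengthens during a clip, a point the paper's ``apply a linear homotopy to each part of $\gamma$ passing over two edges of a triangle'' does not address (collapsing $v_i$ to an arbitrary point of the diagonal, e.g.\ its midpoint, can lengthen the shorter of the two incident edges).

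The gap is in the case you suppress as routine bookkeeping. When $\gamma$ crosses $(A,B)$, the closure of the union of nonzero-winding faces need not decompose into pieces each bounded by one sub-arc of $\gamma$ and one sub-segment of $[A,B]$, and --- worse --- a lobe of $\gamma$ (an arc between consecutive crossings together with its chord) can enclose a puncture even though the winding number of $\gamma\cdot\bar{\tilde\gamma}$ about every puncture vanishes. Concretely, take $A=(0,0)$, $B=(10,0)$, a single puncture at $(4,1)$, and the simple piecewise linear arc through $(0,5)$, $(8,5)$, $(8,-1)$, $(6,-1)$, $(6,3)$, $(2,3)$, $(2,-2)$, $(10,-2)$: it crosses $[A,B]$ at $(8,0)$, $(6,0)$, $(2,0)$; the two upper lobes are nested with opposite orientations, so the winding number about $(4,1)$ is $-1+1=0$ and $\gamma$ is indeed homotopic to $\tilde\gamma$ in $X$; yet the inner lobe's disk $[2,6]\times[0,3]$ contains the puncture. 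That inner arc therefore cannot be pushed onto its chord; it must be unhooked by sweeping it through the outer puncture-free face, whose boundary consists of two disjoint sub-arcs of $\gamma$ and two disjoint sub-segments of $[A,B]$, and your ear-clipping step (which collapses a polygon bounded by a single connected arc of $\gamma$ onto one designated edge) does not apply to such a piece, nor does ``processing from the $\gamma$-side toward $[A,B]$'' resolve which region the inner arc should traverse. Handling nested, oppositely oriented lobes requires a genuinely global argument (some form of unnesting or monotonization with respect to the line through $A$ and $B$, which is what the paper's second half attempts), so the parenthetical in your Step~1 is where the real content of the crossing case lives, and it is missing.
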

\begin{proof}
    For convenience, we shall refer to a homotopy satisfying the conditions
    of the lemma as a proper homotopy.

    First, assume that $\gamma$ and $\tilde\gamma$ intersect only at the
    end points. In this case the loop
    $\varphi=\tilde\gamma\cdot\bar\gamma$, where
    $\bar\gamma(t)=\gamma(1-t)$, defines a simple polygon, $P$.  Let
    $v_0,\ldots,v_m$ be the vertices of $\gamma$ such that
    $v_0=\gamma(0), v_m=\gamma(1)$, and $v_i$, $i=1,\ldots,m-1$ have
    angle different from $\pi$. Let $t_i$ be such that
    $\gamma(t_i)=v_i$.  If $m=2$ then $P$ is a triangle. Hence, $\gamma$
    can be properly homotoped onto (the image of) $\tilde\gamma$ by a
    linear homotopy.  For $m>2$ we can triangulate $P$, with triangles
    having vertices in $\{v_0,\ldots,v_m\}$. A proper homotopy is
    obtained by successively applying a linear homotopy to each part of
    $\gamma$ that passes over two edges of a triangle.

    Suppose now that interiors of $\gamma$ and $\tilde\gamma$ intersect. Let
    $\gamma(s)$ and $\gamma(t)$ be two successive intersection points
    such that $\gamma|_{(s,t)}$ and $\tilde\gamma$ do not intersect. If
    $\gamma(s)$ and $\gamma(t)$ are vertices, we can employ our
    foregoing argument to properly homotope $\gamma_{[s,t]}$ onto
    $[\gamma(s),\gamma(t)]$. Hence, assume that $[s^-,t^-]$ is the
    largest subinterval of $(s,t)$ such that $\gamma(s^-)$ and
    $\gamma(t^-)$ are vertices. Denote by $Q$ the quadrilateral with
    vertices $\gamma(s)$, $\gamma(s^-)$, $\gamma(t^-)$, $\gamma(t)$.
    Let $v_0=\gamma(s^-)$, $v_m=\gamma(t^-)$, and let $v_1,\ldots,v_{m-1}$
    be the vertices of $\gamma|_{(s^-,t^-)}$ lying inside $Q$ such that
    angles $\angle v_{i-1}v_{i}v_{i+1}$, $i=1,\ldots,m-1$ are less than $\pi$.
    Let $t_i$ be such that $v_i=\gamma(t_i)$.
    Then $\gamma|_{[t_i, t_{i+1}]}$ concatenated with $[v_{i+1},v_{i}]$
    defines a simple polygon, and we can use our previous argument to
    properly homotope each $\gamma|_{[t_i,t_{i+1}]}$ onto $[v_i,v_{i+1}]$.
    
    Hence, we can assume that the image of $\gamma|_{[s^-,t^-]}$ is the
    same as the image of a piecewise linear path defined by
    $v_0,\ldots,v_m$. For convenience, let
    $\varphi=\gamma|_{[s^-,t^-]}$, $\varphi_s=\gamma|_{[s,s^-]}$,
    $\varphi_t=\gamma|_{[t^-,t]}$. Suppose that $\varphi$ is not monotone with
    respect to the line $\ell$ defined by $\tilde\gamma$, that is, there
    exists a line perpendicular to $\ell$ intersecting $\varphi$ in more
    than one point. Then we can find a vertex $v_s$ and/or a vertex
    $v_t$ of $\varphi$ such that lines passing through $v_s$ and
    $v_t$, respectively, and perpendicular to the lines defined by
    $\varphi_s$ and $\varphi_t$, respectively, have $\varphi$ on one
    side and do intersect $\varphi_s$ and $\varphi_t$, respectively.
    Let $w_s$ and $w_t$ be the corresponding intersection points, and
    let $s'$ and $t'$ be such that $\varphi(s')=v_s$, $\varphi(t')=v_t$.
    Then we can use a linear homotopy to properly homotope
    $\varphi|_{[0,s']}$ and $\varphi|_{[t',1]}$ onto $[w_s,v_s]$ and
    $[v_t,w_t]$, respectively. Such a deformation makes $\varphi$
    monotone with respect to $\ell$ (see Figure \ref{fig:hom2straight}).
    \begin{figure}[htb!]
        \centering
        \includegraphics[width=0.5\textwidth]{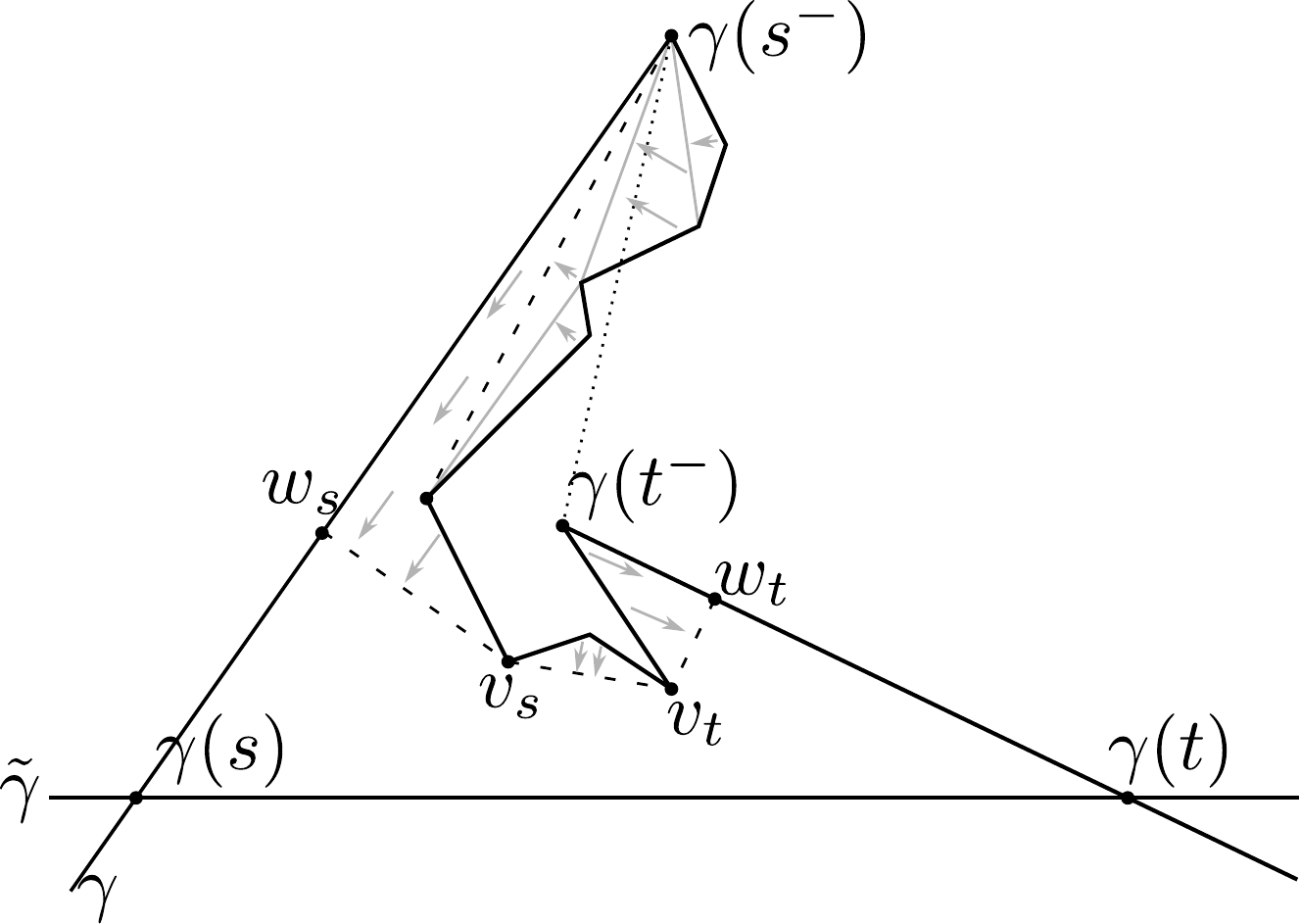}
        \caption{
            \label{fig:hom2straight}
            Illustration of a part of the proof of Lemma
            \ref{lem:straight_pl_hom2}. Gray lines show triangulation of
            the corresponding simple polygon. Gray arrows show direction
            of the homotopy.
        }
    \end{figure}

    The above considerations show that if $\gamma(s)$ and $\gamma(t)$
    are any two successive intersection points of $\gamma$ and
    $\tilde\gamma$ such that $\gamma|_{(s,t)}$ and $\tilde\gamma$ do not
    intersect, then $\gamma|_{[s^-,t^-]}$ can be assumed monotone with
    respect to $\ell$, where $s^-$, $t^-$ and $\ell$ are defined as
    before. But then we can properly homotopy $\gamma$ onto
    $\tilde\gamma$ using a linear homotopy which simply moves the
    vertices of $\gamma$ along the projection lines in such a way that
    all the intersection points stay the same.
\end{proof}

We are now ready to prove that $G_n$ is connected.
\begin{proof}(Of part \ref{i:samp_main2} of Proposition \ref{prp:sampling_main}.)

    We show that if $\gamma_0,\gamma_1\in\homt^R_n(X)$, $n>n^*$, then
    $\gamma_0\overset{\homt^R_n(X)}{\simeq}\gamma_1$.

    Given a loop $\gamma\in\LL(\R^2)$ we shall denote by
    $\sigma_n(\gamma)$ the piecewise linear loop with vertices
    $\gamma(t_i)$, $t_i=\frac{i}{n+1}$, $i=0,\ldots,n$, and edge
    traversal time $\frac{1}{n+1}$.
    
    Let $\gamma^{\delta^*}$ be a constant speed parametrization of
    $\hat\gamma^{\delta^*}$ and let
    $\tilde\gamma^*=\sigma_n(\gamma^{\delta^*})$. Our choice of $n^*$
    guarantees that $\tilde\gamma^*\in\homt^R_n(X)$. We shall show that
    $\tilde\gamma^*\overset{\homt^R_n(X)}{\simeq}\gamma$ for any
    $\gamma\in\homt^R_n(X)$.

    Take $\gamma\in\homt^R_n(X)$. By Lemma \ref{lem:straight_pl_hom}
    we may assume that $\w(\gamma)$ is cyclically irreducible and
    $\gamma(\tau^-(a,b))$ is a straight line segment for each pair
    $(a,b)$ of cyclically consecutive symbols of $\w(\gamma)$.

    First, assume that (the image of) $\gamma$ lies outside of the union
    of open balls of radius $\delta^*$ centered at the punctures.  In
    other words, $\gamma\in\homt(X^{\delta^*})$. Since $X^{\delta^*}$ is
    an NPC space, there exists a length non increasing free homotopy $H$
    of $\gamma$ such that $H(\cdot,1)$ is a parametrization of
    $\hat\gamma^{\delta^*}$. The choice of $n^*$ guarantees that
    $\sigma_n(H(\cdot,s))\in\homt^R_n(X)$ for all $s\in[0,1]$. Let
    $\gamma_1=\sigma_n(H(\cdot,1))$. We say that $\gamma_1$ is
    obtained from $\gamma$ by moving its vertices along $H$.
    The choice of $\delta^*$ allows us
    to further deform $\gamma_1$ by moving its vertices along the image
    of $\gamma^{\delta^*}$ keeping them within $\frac{R}{n+1}$ of each
    other until they coincide with the vertices of $\tilde\gamma^*$.
    Combining such a motion of vertices with $\sigma_n\circ H$ provides
    the homotopy within $\homt^R_n(X)$ between $\gamma$ and
    $\tilde\gamma^*$.

    Suppose now that $\gamma\ni X^{\delta^*}$. Let $[s,t]\subset\R$ be
    such that $\gamma|_{[s,t]}\in\Omega(X^{\delta^*})$, but $\forall\e>0$
    $\gamma|_{[s-\e,t+\e]}\ni\Omega(X^{\delta^*})$. Then there is a
    distance non increasing homotopy $H$ of $\gamma|_{[s,t]}$ such that
    $H(\cdot,1)$ is the shortest path between $\gamma(s)$ and
    $\gamma(t)$ homotopic to $\gamma|_{[s,t]}$. Again, the choice of
    $n^*$ guarantees that moving vertices of $\gamma$ along $H$ is a
    homotopy within $\homt^R_n(X)$. We can perform such a homotopy for each
    of the aforementioned segments $[s,t]$. Hence, we assume that $\gamma$
    has the structure obtained after such deformations.
    
    The loop $\gamma$ may intersect balls which are not supporting for
    $\hat\gamma^{\delta^*}$. Let $[s,t]\subset\R$ be such that
    $\gamma|_{[s,t]}$ lies outside of all supporting balls for
    $\hat\gamma^{\delta^*}$ and $\gamma(s)$, $\gamma(t)$ belong to
    supporting circles. Let $[s^-,t^-]$ be the largest subinterval of
    $[s,t]$ such that $\gamma(s^-)$ and $\gamma(t^-)$ are vertices.
    In this case $\gamma|_{[s^-,t^-]}$ is homotopic to the linear path
    $[\gamma(s^-), \gamma(t^-)]$. Hence, we can employ Lemma
    \ref{lem:straight_pl_hom2} to find a homotopy $H$ of
    $\gamma|_{[s^-,t^-]}$ within $\homt^R_n(X)$ such that $H(\cdot,1)$
    is a re-parametrization of $[\gamma(s^-),\gamma(t^-)]$.
    We can performing such a homotopy for each of the above segments
    $[s,t]$. Hence, we assume that $\gamma$
    has the structure obtained after such deformations.

    We can straighten $\gamma$ a little more. Suppose that $[s,t]$ is
    such that $\gamma|_{[s,t]}$ connects two supporting circles for
    $\hat\gamma^{\delta^*}$. Denote these circles by $C_s$ and $C_t$,
    the corresponding supporting balls by $B_s$, $B_t$, and let $z_s$
    and $z_t$ be the corresponding punctures.  Let $p_s$ and $p_t$ be
    the end points of the corresponding straight line segment of
    $\hat\gamma^{\delta^*}$ (which is tangent to $C_s$ and $C_t$).  Let
    $[s^+,t^+]$ be the largest interval containing $[s,t]$ such that
    $\gamma(s^+)$ and $\gamma(t^+)$ are vertices and
    $\gamma|_{(s^+,t^+)}$ does not intersect $[z_s,p_s]$ and
    $[z_t,p_t]$. Then $\gamma|_{[s^+,t^+]}$ is homotopic to
    $[\gamma(s^+),\gamma(t^+)]$ and we can straighten it using Lemma
    \ref{lem:straight_pl_hom2}. We can perform such straightening for
    each of the segments connecting supporting circles. Hence, we can
    assume that $\gamma$ has the resulting structure. Moreover, since a
    sector of angle less than $\pi$ is convex, the parts of $\gamma$
    within such a sector can also be straightened.  Therefore, we can
    assume that $\gamma$ is such that each $\gamma|_{[s^+,t^+]}$ (with
    $s^+$, $t^+$ as above) is a straight line segment (which we shall
    call a supporting segment of $\gamma$), and the vertices
    of $\gamma$ between supporting segments form a path whose
    length is less than the length of the corresponding circular arc of
    $\hat\gamma^{\delta^*}$.

    The above considerations allow us to assume that $\gamma$ is such
    that
    $$
    R-L(\gamma)\geq
    R-L(\hat\gamma^{\delta^*})-\frac{\delta^*N^*}{2\reach(Z)}-\frac{2N^*R}{n}\geq
    \frac{1}{3}(R-L(\hat\gamma^*))
    $$
    Consequently, we can move the vertices of $\gamma$ along its image,
    keeping them within distance $\frac{1}{n+1}$, until each supporting
    segment of $\gamma$ has the same number of vertices as the part of
    $\tilde\gamma$ lying along the corresponding straight line segment
    of $\hat\gamma^{\delta^*}$, and each part of $\gamma$ between
    supporting segments contains the same number of vertices as the
    corresponding part of $\tilde\gamma^*$. Then we can use a linear
    homotopy to deform $\gamma$ within $\homt^R_n(X)$ onto the image of
    $\tilde\gamma$. If the resulting loop has a different starting point
    than $\tilde\gamma$, we can simply move its vertices along the image
    of $\hat\gamma^{\delta^*}$ to align the starting points.

\end{proof}

\section{Conclusion}
\label{sec:conclusion}
We have extended the Mogulskii's theorem to closed paths in the plane and used
this result to show that the length of a typical representative of a
non-trivial free homotopy class in a multi-punctured plane is extremely close
to the minimum length. We have also provided a simple MCMC method for sampling
from the corresponding uniform measure, thus giving us a way to easily
approximate a solution to the classical problem in geometric optimization.

Of course, using MCMC methods is optimization is not new, but the fact that it
is the uniform measure that is concentrated around the optimum may have
important consequences in several application domains. For example, one may
regard a piecewise linear loop as a closed chain of autonomous agents. Our
result implies that by simply maintaining a proper distance and surrounding
points of interest in a specific way such agents may form a close to optimal
chain, which can be used for relaying signals or other important tasks.

It is not difficult to see that our result should still hold if instead of
punctures we consider any convex obstacles. Moreover, one can expect a similar
result to hold for loops in Riemannian manifolds with a non-trivial fundamental
group. This is one of the directions that we plan to pursue. More generally, it
would be interesting to consider configurations of triangulated surfaces and
other piecewise linear objects, which is likely to require a different
approach.

\bibliographystyle{plain}
\bibliography{refs}
\end{document}